%% LyX 2.4.3 created this file.  For more info, see https://www.lyx.org/.
%% Do not edit unless you really know what you are doing.
\documentclass[oneside,english,A4,reqno, 10pt]{amsart}
\usepackage[T1]{fontenc}
\usepackage{orcidlink}
\usepackage[utf8]{inputenc}
\usepackage{xcolor}
\usepackage{amstext}
\usepackage{amsthm}
\usepackage{amssymb}
\usepackage{enumitem}
\usepackage[a4paper]{geometry}
\geometry{verbose,tmargin=3cm,bmargin=3cm,lmargin=2.5cm,rmargin=2.5cm}

\makeatletter
%%%%%%%%%%%%%%%%%%%%%%%%%%%%%% Textclass specific LaTeX commands.
\numberwithin{equation}{section}
\numberwithin{figure}{section}
\theoremstyle{plain}
\newtheorem{thm}{\protect\theoremname}
\theoremstyle{plain}
\newtheorem{prop}[thm]{\protect\propositionname}
\theoremstyle{definition}
\newtheorem{defn}[thm]{\protect\definitionname}
\theoremstyle{remark}
\newtheorem{rem}[thm]{\protect\remarkname}
\theoremstyle{plain}
\newtheorem{lem}[thm]{\protect\lemmaname}

%%%%%%%%%%%%%%%%%%%%%%%%%%%%%% User specified LaTeX commands.
\allowdisplaybreaks
\usepackage{hyperref}
\usepackage{cleveref}
\usepackage{microtype}
\linespread{1.05}
\usepackage{tikz}
\usepackage{pgfplots}
\usepgfplotslibrary{fillbetween}

\makeatother

\usepackage{babel}
\providecommand{\definitionname}{Definition}
\providecommand{\lemmaname}{Lemma}
\providecommand{\propositionname}{Proposition}
\providecommand{\remarkname}{Remark}
\providecommand{\theoremname}{Theorem}

\begin{document}
\title[Quadratic discrepancy estimates for probability measures on the Heisenberg
group]{Quadratic discrepancy estimates for probability\\ measures on the Heisenberg
group}
\author{Luca Brandolini\orcidlink{0000-0002-9670-9051}}
\address{Dipartimento di Ingegneria Gestionale, dell\textquoteright Informazione
e della Produzione, Universit\`a degli Studi di Bergamo, Viale Marconi
5, 24044 Dalmine, Italy}
\email{luca.brandolini@unibg.it, alessandro.monguzzi@unibg.it}
\author{Alessandro Monguzzi \orcidlink{0000-0003-3233-5000}}
% \address{Dipartimento di Ingegneria Gestionale, dell\textquoteright Informazione
% e della Produzione, Universit\`a degli Studi di Bergamo, Viale Marconi
% 5, 24044}
% \email{alessandro.monguzzi@unibg.it}
\author{Matteo Monti\orcidlink{0000-0001-6848-5938}}
\address{Dipartimento di Matematica e Applicazioni, Universit\`a di Milano--Bicocca,
Via Cozzi 55, 20125 Milano, Italy}
\email{matteo.monti@unimib.it}
\thanks{
All the authors are members of Indam--Gnampa. This work was supported by the European Union – NextGenerationEU, under the National Recovery and
Resilience Plan (NRRP), Mission 4, Component 2, Investment 1.1, funding call PRIN 2022 D.D. 104 published
on 2.2.2022 by the Italian Ministry of University and Research (Ministero dell’Universit\`a e della Ricerca), Project
Title: TIGRECO - TIme-varying signals on Graphs: REal and COmplex methods – CUP F53D23002630001. This project started while M. M. was a postdoctoral researcher on the TIGRECO project at the University of Bergamo. M.M. gratefully acknowledges the research support and stimulating environment provided by the University of Bergamo. 
}
\begin{abstract}
We initiate the study of quadratic discrepancy for finite point sets on the Heisenberg group $\mathbb H^n$
 with respect to upper Ahlfors regular probability measures. For a natural family of test sets given by left translations and dilations of cylindrically defined neighborhoods, we introduce an $L^2$-discrepancy and establish a Roth-type lower bound depending on the homogeneous dimension of $\mathbb H^n$.

This result extends classical discrepancy estimates from the Euclidean and compact settings to a non-commutative, step-two nilpotent Lie group. It should be viewed as a first step toward the development of a discrepancy theory on the Heisenberg group.
\end{abstract}
\maketitle
\section{Introduction}

The study of the distribution of finite point sets and their discrepancy
lies at the intersection of several areas of mathematics, such as
number theory, harmonic analysis, and geometry. It originates from
the fundamental problem of understanding how well a discrete set of
points can approximate a continuous object. In many contexts, one
is given a measure space endowed with a probability measure and seeks
to replace the integration of a function with respect to that measure
by the average of its values over a finite number of sampling points.
The accuracy of such an approximation depends on how uniformly the
points are distributed with respect to the measure, and this notion
is quantitatively captured by the concept of discrepancy. In general
terms, one considers a measure space $X$, a finite sequence of points
$z_{1},z_{2},\ldots,z_{N}\in X$ and a probability measure $\mu$
on $X$. Given a sufficiently rich family $\mathcal{R}$ of measurable
subsets of $X$ the discrepancy of the point set with respect to $R\in\mathcal{R}$
is defined as 
\[
D_{N}\left(R\right)=\sum_{j=1}^{N}\chi_{R}\left(z_{j}\right)-N\mu\left(R\right).
\]
The size of $D_{N}\left(R\right)$ measures the deviation between
the distribution of the points and the reference measure $\mu$, when
tested against the sets in $\mathcal{R}$. Studying how $D_{N}\left(R\right)$
behaves as $N$ grows for different choices of $\mathcal{R}$
provides deep insight into the interplay between discrete and continuous
structures. When the family $\mathcal{R}$ depends
on one or more parameters -- for instance position and dilation of
a given shape -- one can study not only the pointwise behavior of
$D_{N}\left(R\right)$ but also its average with respect to the parameters.
In this setting, it is natural to consider the $L^{2}$-discrepancy,
which measures the mean square deviation of $D_{N}\left(R\right)$
over the parameter space. A landmark result in this direction is the following theorem of Roth
\cite{Roth1954}.
\begin{thm}
There exists $c>0$ such that for any finite sequence $z_{1},z_{2},\ldots,z_{N}$
of points in $[0,1]^{n}$ we have
\[
\bigg(\int_{\left[0,1\right]^{n}}\bigg|\sum_{j=1}^{N}\chi_{I_{x}}\left(z_{j}\right)-N\left|I_{x}\right|\bigg|^{2}dx\bigg)^{1/2}>c\log\left(N\right)^{\frac{n-1}{2}}
\]
where $I_{x}=\left[0,x_{1}\right]\times\cdots\times\left[0,x_{n}\right]$
for every $x=\left(x_{1},\ldots,x_{n}\right)\in\left[0,1\right]^{n}.$
\end{thm}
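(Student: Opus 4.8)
The plan is to run Roth's orthogonal function method. Write $D_N(x)=\sum_{j=1}^N\chi_{I_x}(z_j)-Nx_1\cdots x_n$ for the discrepancy function (so $|I_x|=x_1\cdots x_n$), and for a dyadic interval $J=[a2^{-r},(a+1)2^{-r})\subset[0,1)$ let $h_J$ be $+1$ on the left half of $J$, $-1$ on the right half, and $0$ off $J$. For a dyadic box $Q=J_1\times\cdots\times J_n$ with $|J_i|=2^{-r_i}$ put $g_Q(x)=\prod_{i=1}^n h_{J_i}(x_i)$; call $\mathbf r=(r_1,\dots,r_n)$ the \emph{type} of $Q$ and $|\mathbf r|=r_1+\cdots+r_n$ its \emph{order}. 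Two facts are immediate and will be used throughout: $\|g_Q\|_{L^2}^2=|Q|=2^{-|\mathbf r|}$, and $\langle g_Q,g_{Q'}\rangle=0$ whenever $Q\neq Q'$ have the same order — indeed some coordinate $i$ then has either $J_i\cap J_i'=\emptyset$ (disjoint supports) or $J_i'\subsetneq J_i$, in which case $h_{J_i}$ is constant on $J_i'$ and annihilates the mean-zero factor $h_{J_i'}$.

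The key step — Roth's lemma — is the identity
\[
\langle D_N,g_Q\rangle=(-1)^{n+1}\,N\,4^{-n}\,4^{-|\mathbf r|}
\qquad\text{whenever }Q\text{ contains none of }z_1,\dots,z_N.
\]
To see it, split $D_N$ into the combinatorial part $\sum_j\chi_{I_x}(z_j)$ and the linear part $Nx_1\cdots x_n$. The linear part is a tensor product, $\langle x_1\cdots x_n,g_Q\rangle=\prod_i\int_{J_i}t\,h_{J_i}(t)\,dt=(-\tfrac14)^n 4^{-|\mathbf r|}$, since $\int_{J_i}t\,h_{J_i}(t)\,dt=-\tfrac14 4^{-r_i}$. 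For the combinatorial part, with $z$ fixed the function $x\mapsto\chi_{I_x}(z)$ equals $\prod_i\chi_{[z_i,1]}(x_i)$, and $\int_{J_i}\chi_{[z_i,1]}(t)h_{J_i}(t)\,dt=0$ unless $z_i$ lies in the interior of $J_i$ (if $z_i$ is below $J_i$ the factor is $\equiv1$ there, if above it is $\equiv0$, and in both cases the $h_{J_i}$-mean vanishes); hence $\langle\chi_{I_x}(z),g_Q\rangle=0$ for every $z\notin Q$, so an empty box sees only the linear part.

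I would then fix $m=\lfloor\log_2 N\rfloor+2$, so that $2N\le 2^m\le 4N$. For each of the $\binom{m+n-1}{n-1}\ge m^{n-1}/(n-1)!$ types $\mathbf r$ with $|\mathbf r|=m$ there are $2^m$ dyadic boxes of that type, at most $N$ of which contain some $z_j$, so at least $2^m-N\ge 2^m/2$ are empty. Define the auxiliary function
\[
F=(-1)^{n+1}\sum_{|\mathbf r|=m}\ \sum_{\substack{Q\text{ of type }\mathbf r\\ Q\text{ empty}}}g_Q.
\]
By Roth's lemma each empty box contributes the \emph{same} positive number $N4^{-n-m}$ to $\langle D_N,F\rangle$, whence, using $2^m\asymp N$,
\[
\langle D_N,F\rangle\ \ge\ \frac{m^{n-1}}{(n-1)!}\cdot\frac{2^m}{2}\cdot N4^{-n-m}\ \gtrsim_n\ m^{n-1}.
\]
On the other hand the $g_Q$ occurring in $F$ all have order $m$ and are pairwise orthogonal, so
\[
\|F\|_{L^2}^2=\sum_{|\mathbf r|=m}\ \sum_{Q\text{ empty}}2^{-m}\ \le\ \binom{m+n-1}{n-1}\,2^m\,2^{-m}\ \lesssim_n\ m^{n-1}.
\]
Cauchy–Schwarz now yields
\[
\|D_N\|_{L^2([0,1]^n)}\ \ge\ \frac{\langle D_N,F\rangle}{\|F\|_{L^2}}\ \gtrsim_n\ \frac{m^{n-1}}{m^{(n-1)/2}}=m^{(n-1)/2}\ \gtrsim_n\ (\log N)^{(n-1)/2},
\]
which is the asserted bound.

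The point requiring the most care is the construction of $F$: everything rests on the observation that at the single volume scale $2^{-m}\asymp 1/N$, but across all $\asymp m^{n-1}$ admissible aspect ratios, the empty-box Haar coefficients of $D_N$ are not only of the expected size but \emph{aligned in sign}, so they accumulate coherently to size $\asymp m^{n-1}$ in the numerator, while their pairwise orthogonality confines $\|F\|_{L^2}$ to merely $\asymp m^{(n-1)/2}$; this square-root gap is exactly what produces the exponent $(n-1)/2$. The remaining issues are routine: the half-open conventions that make the dyadic boxes partition $[0,1)^n$ (a point on the lower face of a box is harmlessly counted there and still contributes nothing to the corresponding Haar coefficient), and the degenerate case $n=1$, for which the estimate reads $\|D_N\|_{L^2}\gtrsim1$ and simply records that $D_N$ cannot vanish identically.
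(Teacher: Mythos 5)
Your argument is correct, and it is a complete and accurate rendition of Roth's original orthogonal-function method: the Haar tensor products $g_Q$, the computation $\int_J t\,h_J(t)\,dt=-\tfrac14 4^{-r}$, the vanishing of the counting part on empty boxes, the sign-coherence of the empty-box coefficients across all $\asymp m^{n-1}$ types of order $m\asymp\log_2 N$, and the orthogonality bound $\|F\|_2^2\lesssim m^{n-1}$ all check out, as does the bookkeeping $2N\leqslant 2^m\leqslant 4N$ that makes at least half the boxes of each type empty and keeps $N2^{-m}\gtrsim 1$. One cannot compare this with "the paper's proof" because the paper offers none: this theorem is quoted as background with a citation to Roth's 1954 article. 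It is worth noting, though, that the strategy the paper actually implements for its Heisenberg-group analogue (Theorem \ref{MainResult}) is the Fourier-analytic variant of the same duality idea, in the style of Montgomery and Beck rather than of Roth: instead of testing $D_N$ against a sum of Haar functions supported on empty boxes, the authors pass to the group Fourier side, split the relevant quadratic form into a geometric factor $\mathcal I$ and a point-distribution factor $\mathcal J$, and use a positive heat-type kernel $K_s$ in place of your auxiliary function $F$ to extract the main term $Ns^{-n}$ while controlling cross terms via upper Ahlfors regularity. Your Haar-function route is the more elementary one and yields the sharp logarithmic exponent on the cube, but it relies on the product structure of the corners $I_x$; the kernel/Fourier route is what survives in settings (spheres, metric measure spaces, $\mathbb H^n$) where no such tensor decomposition of the test sets is available.
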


Since Roth\textquoteright s breakthrough, the study of $L^{2}$-discrepancy
has developed in many directions, involving different families of
subsets, different underlying spaces and different norms. Most of the
contributions have focused on the unit cube, the torus, and the sphere.
See e.g. \cite{BeckChenBook1987}, \cite{BrandoliniGiganteTravaglini2014},
\cite{ChazelleBook2000}, \cite{Panorama2014}, \cite{ChenLectures}, 
\cite{DrmotaTichy1997}, \cite{Matousek1999}, \cite{TravagliniBook2014}
for background and further references on the topic.

More recently, Brandolini, Colzani and Travaglini have extended this
line of research to upper Ahlfors probability measure in $\mathbb{R}^{n}$ \cite{BCT2023} while
Skriganov \cite{Skriganov2017,Skriganov2019} and Brandolini, Gariboldi,
Gigante and Monguzzi \cite{BGG2021,BGG2022} and \cite{BGGM2024} 
have studied the $L^{2}$-discrepancy on compact metric spaces and
on compact homogeneous spaces. Motivated by these developments, it is natural to ask whether analogous
estimates hold in the non-commutative settings of the Heisenberg group
$\mathbb{H}^{n}$, which plays a fundamental role in several areas
of analysis, from partial differential equations to sub-Riemannian
geometry.

In this paper, we initiate the study of $L^{2}$-discrepancy on $\mathbb H^n$. We introduce an appropriate notion of discrepancy and use harmonic analysis to obtain a Roth-type lower bound.
To state our main results we introduce some essential definitions; more details are referred to the next section. 

The Heisenberg group $\mathbb{H}^{n}$ is the space $\mathbb C^n\times\mathbb R$
endowed with the group operation
\[
\left(z,t\right)\circ\left(w,s\right)=\left(z+w,t+s+\frac{1}{2}\mathtt{Im}(z\cdot\overline{w})\right),
\]
where $z,w\in\mathbb C^n$ and $t,s\in\mathbb R$. 
% It is quite easy to check that for every $\rho>0$ the dilations
% \[
% D_{\rho}\left(z,t\right)=\left(\rho z,\rho^{2}t\right)
% \]
% are group automorphism, that ,
% \[
% D_{\rho}\left[\left(z,t\right)\circ\left(w,s\right)\right]=D_{\rho}\left(z,t\right)\circ D_{\rho}\left(w,t\right).
% \]
% Let $\left|A\right|$ denotes the Lebesgue measure of a measurable
% set $A\subseteq\mathbb{H}^{n}$. It is well known that this measure
% is left and right invariant with respect to the group action. Moreover
% if $D_{\rho}A=\left\{ x\in\mathbb{H}^{n}:D_{\rho^{-1}}x\in A\right\} $
% then
% \[
% \left|D_{\rho}A\right|=\rho^{Q}\left|A\right|
% \]
% where $Q=2n+2$ is the homogeneous dimension of $\mathbb{H}^{n}$. 
A natural metric on $\mathbb H^n$ is induced by
the Korányi norm
\[
\left\Vert \left(z,t\right)\right\Vert =\left(\left|z\right|^{4}+t^{2}\right)^{1/4},
\]
namely
\[
\mathrm{dist}\left((z,t),(w,s)\right)=\left\Vert (z,t)^{-1}\circ (w,s)\right\Vert.
\]
We denote by $\mathfrak{B}_r=\mathfrak{B}_r(z,t)$ the ball of radius $r$  centered at $(z,t)$ defined by the Korányi norm and we refer to it as a  Heisenberg ball. 
For every $\rho>0$ consider the anisotropic dilations
\[
D_{\rho}\left(z,t\right)=\left(\rho z,\rho^{2}t\right).
\]
It is not hard to check that such dilations are  group automorphisms, namely
\[
D_{\rho}\left[\left(z,t\right)\circ\left(w,s\right)\right]=D_{\rho}\left(z,t\right)\circ D_{\rho}\left(w,s\right).
\]
Let $\left|A\right|$ denotes the Lebesgue measure of a measurable
set $A\subseteq\mathbb{H}^{n}$. It is well known that this measure
is both left- and right-invariant with respect to the group action. Moreover,
if $D_{\rho}A=\left\{ x\in\mathbb{H}^{n}:D_{\rho^{-1}}x\in A\right\}$,
then
\[
\left|D_{\rho}A\right|=\rho^{Q}\left|A\right|
\]
where $Q=2n+2$ is the homogeneous dimension of $\mathbb{H}^{n}$. The Korányi norm $\|\cdot\|$  is homogeneous with respect to the dilations $D_\rho$ in the sense that 
\[
\left\Vert D_{\rho}\left(z,t\right)\right\Vert =\rho\left\Vert \left(z,t\right)\right\Vert .
\]
% Note that 
% \[
% \left\Vert D_{\rho}\left(z,t\right)\right\Vert =\rho\left\Vert \left(z,t\right)\right\Vert .
% \]
% One can also easily check that the Lebesgue measure on $\mathbb{R}^{2n+1}$
% is invariant on $\mathbb{H}^{n}$ with respect to left and right translations.
%
% If $f,g\in L^{2}\left(\mathbb{H}^{n}\right)$ their convolution is
% defined by
% \begin{align*}
% f*g\left(z,t\right) & =\int_{\mathbb{H}^{n}}f\left(\rho,\tau\right)g\left(\left(\rho,\tau\right)^{-1}\circ\left(z,t\right)\right)d\rho d\tau.
% \end{align*}
We refer to \cite{BLU} for background on $\mathbb{H}^n$ and homogeneous groups. See also \cite{BB2023}, which contains a concise chapter on homogeneous groups.

For every $\rho>0$, we also let 
\begin{align*}
    B_{\rho}=D_\rho B_1=\left\{ \left(z,t\right)\in\mathbb{H}^{n}:\left|z\right|\leqslant\rho,\left|t\right|\leqslant\rho^{2}\right\}.
\end{align*}
Let $\mu$ be a probability measure on $\mathbb{H}^{n}$ and $\left\{ \left(\zeta_{j},\tau_{j}\right)\right\} _{j=1}^{N}\subseteq\mathbb{H}^{n}$ be  
a finite sequence of points. 
We define the discrepancy
\begin{align*}
D_{N}\left(z,t;\rho\right)= & \sum_{j=1}^{N}\chi_{\left(z,t\right)\circ B_{\rho}}\left(\zeta_{j},\tau_{j}\right)-N\mu\left(\left(z,t\right)\circ B_{\rho}\right)
\end{align*}
where $\left(z,t\right)\circ B_{\rho}=\left\{ \left(w,s\right)\in\mathbb{H}^{n}:\left(z,t\right)^{-1}\circ\left(w,s\right)\in B_{\rho}\right\} $
is the left translation of $B_{\rho}$. 

\medskip
Our main result reads as follows.
\begin{thm}\label{MainResult}
Let $\mu$ be a probability measure on $\mathbb{H}^{n}$ which is
Ahlfors upper regular; namely, there exists $c>0$ such that for every Heisenberg ball $\mathfrak{B}_{r}$ we have $\mu\left(\mathfrak{B}_{r}\right)\leqslant cr^{Q}$.
There exists a constant $C>0$ such that, for every finite sequence
of $N$ points in $\mathbb{H}^{n}$, with $N$ large enough, we have
\[
\int_{0}^{1}\int_{\mathbb{H}^{n}}\left|D_{N}\left(z,t;\rho\right)\right|^{2}dzdt\,d\rho\geqslant CN^{1-\frac{1}{Q-2}}.
\]
\end{thm}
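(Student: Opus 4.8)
The plan is to follow the classical Roth philosophy adapted to the Heisenberg setting: replace the indicator $\chi_{(z,t)\circ B_\rho}$ by a smooth bump, pass to the Fourier side via the group Fourier transform on $\mathbb{H}^n$, and extract a lower bound on the $L^2$-norm of the discrepancy by testing the transform of the ``auxiliary kernel'' against a suitable function. A cleaner route that avoids the full representation theory is the following real-variable argument. Fix a small parameter $r \in (0,1)$ to be optimized at the end, and consider the convolution (in the group sense, or even just in the relevant variables) of the discrepancy with a smooth approximate identity $\varphi_r$ supported in a Heisenberg ball of radius $\sim r$. First I would establish an $L^2$ identity: integrating $|D_N(z,t;\rho)|^2$ over $(z,t) \in \mathbb{H}^n$ and $\rho \in (0,1)$ can be rewritten, via the structure of the sets $(z,t)\circ B_\rho$ and left-invariance of Lebesgue measure, as a bilinear form
\[
\sum_{j,k=1}^N K\big((\zeta_j,\tau_j)^{-1}\circ(\zeta_k,\tau_k)\big) - 2N\sum_{j=1}^N (K * \mu)(\zeta_j,\tau_j) + N^2 \iint K\, d\mu\, d\mu,
\]
where $K(z,t) = \int_0^1 |B_\rho \cap (B_\rho \circ (z,t))|\,d\rho$ is an explicit, radial-ish, nonnegative kernel with a computable singularity at the origin. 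The key analytic input is that $K$ is \emph{comparable to a positive-definite kernel} after subtracting a controlled smooth remainder, so the cross terms can be bounded and a diagonal term $\sim N$ survives.

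The second, and more robust, step is the averaging trick of Roth–Davenport: for the \emph{lower} bound it suffices to produce \emph{one} good auxiliary function. I would test the discrepancy against the function
\[
F(z,t) = \int_0^1 \int_{\mathbb{H}^n} D_N(w,s;\rho)\, \psi_\rho\big((w,s)^{-1}\circ(z,t)\big)\, dw\,ds\, d\rho
\]
for a cleverly chosen family $\psi_\rho$; by Cauchy–Schwarz, $\|F\|_{L^2}^2 \le \big(\iint |D_N|^2\big) \cdot \big(\iint |\psi_\rho|^2\big)$, so a lower bound on $\|F\|_{L^2}$ (or on a single linear functional applied to $D_N$) forces a lower bound on the discrepancy $L^2$-norm. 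To get the lower bound on the ``test'' side, I would exploit that the sets $B_\rho$ at scale $r$ can only contain $O(1)$ of the $N$ points on average over a region of measure $\gtrsim r^Q$, while the measure term $N\mu((z,t)\circ B_\rho)$ is controlled by the upper Ahlfors bound $\mu(\mathfrak{B}_r) \le c r^Q$; choosing $r \sim N^{-1/(Q-2)}$ balances the ``point-counting'' fluctuation (of size roughly $\sqrt{N r^Q \cdot r^{-Q}}$ type contributions, yielding the exponent $1 - 1/(Q-2)$) against the measure term. Concretely, I expect the exponent to emerge from optimizing $N r^{Q} \cdot (\text{something}) $ against $N^2 r^{2Q}/(\text{volume})$; the precise bookkeeping is where the $Q-2$ rather than $Q$ appears, coming from the fact that the $t$-direction scales like $\rho^2$ so the $\rho$-average only ``sees'' a $(Q-2)$-dimensional worth of genuine spreading.

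The main obstacle I anticipate is \textbf{handling the non-commutativity in the kernel $K$ and showing the requisite positive-definiteness (or the requisite sign) of the relevant quadratic form}. In the Euclidean torus, Roth's argument uses the Fourier series of boxes and the orthogonality of characters; on $\mathbb{H}^n$ the analogue is the Plancherel theorem for the group Fourier transform, which involves an operator-valued integral over the dual $\widehat{\mathbb{H}^n} \cong \mathbb{R}^\times$ with Planchel measure $|\lambda|^n\,d\lambda$, and the sets $(z,t)\circ B_\rho$ do not have a clean ``product'' Fourier transform. I would circumvent this by working with the \emph{real-variable} kernel $K$ directly: the crucial lemma to prove is that $K(z,t) - c\,\|(z,t)\|^{-(Q-?)}\varphi$ is positive semidefinite on the relevant range, or more simply that $\widehat{K} \ge 0$ after a harmless smooth correction, where $\widehat{\cdot}$ is the Euclidean Fourier transform on $\mathbb{C}^n \times \mathbb{R} \cong \mathbb{R}^{2n+1}$ — this is legitimate because $K$, being an $L^2$-autocorrelation of indicator functions, is automatically of positive type on $\mathbb{R}^{2n+1}$, and the group structure only enters through the twisted argument $(\zeta_j,\tau_j)^{-1}\circ(\zeta_k,\tau_k)$, which is a polynomial (hence measure-preserving, volume-wise) change of variables. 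Once positivity of the quadratic form is in hand, the diagonal term $N\cdot K(0)$ with $K(0) = \int_0^1 |B_\rho|\,d\rho \sim 1$ gives $\gtrsim N$ before optimization; reinserting the scale-$r$ truncation and optimizing $r \sim N^{-1/(Q-2)}$ yields the claimed $CN^{1-1/(Q-2)}$. The secondary technical point — making sure the ``$N$ large enough'' and the constant $C$ are uniform over all point configurations — is routine once the quadratic-form lower bound is established, since the bound there depends only on $Q$, $n$, and the Ahlfors constant $c$.
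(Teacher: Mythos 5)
Your outline captures the right general philosophy (a Roth--Montgomery quadratic-form argument, a diagonal term of size $\sim N$ times a localization factor, the upper Ahlfors bound to control the cross term against $\mu$, and a scale $r\sim N^{-1/(Q-2)}$), and this is indeed the skeleton of the paper's proof. However, there are two genuine gaps that prevent the proposal from being a proof.

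First, the localization step is missing, and the naive version of it fails. With the full kernel $K(z,t)=\int_0^1|B_\rho\cap(B_\rho\circ(z,t))|\,d\rho$ the diagonal term is only $N\cdot K(0)\sim N$, while the cross term $-2N\sum_j(K*\mu)(\zeta_j)$ is of size $N^2$ (since $K*\mu=O(1)$ by upper Ahlfors regularity), so the quadratic form gives nothing. If instead you localize in the most direct real-variable way, by restricting the $\rho$-integration to $(0,r)$, the diagonal becomes $\sim Nr^{Q+1}$ against a cross term $\sim N^2r^{2Q+1}$, and optimizing yields a bound that tends to zero. The correct localization is in frequency, not in $\rho$: one must multiply the Plancherel integrand by a positive kernel concentrated at frequencies $\gtrsim 1/r$ (the paper uses a vertically cut-off heat kernel $K_s$ with $K_s(0,0)\gtrsim s^{-n}$), and then one needs a \emph{lower} bound, uniform over the relevant frequency range, for the $\rho$-average $\int_0^1|\widehat{\chi_{B_\rho}}(\lambda,k)|^2\,d\rho$ of the group Fourier transform of the test sets. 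This lower bound is the technical heart of the paper (Theorem \ref{thm:stime} and Proposition \ref{prop: stima media-1}), and it requires the Frenzen--Wong uniform asymptotics for Laguerre functions in the Bessel and Airy regimes; nothing in your proposal substitutes for it, and no ``harmless smooth correction'' of a positive-type kernel produces it. Second, your justification of positivity is not valid as stated: Euclidean positive-definiteness of $K$ on $\mathbb R^{2n+1}$ controls $\sum_{j,k}K(x_k-x_j)$, not the group-twisted form $\sum_{j,k}K(x_j^{-1}\circ x_k)$; the latter is nonnegative here only because it arises as $\int_0^1\|(\sum_j\delta_{x_j})*\chi_{B_\rho}\|_{L^2}^2\,d\rho$, i.e., via the group convolution structure (equivalently, via the operator-valued Plancherel theorem), which is exactly the representation-theoretic input you were hoping to avoid. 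Finally, the exponent $Q-2$ does not come from a dimension count of ``genuine spreading'': in the paper it arises because the localized diagonal term scales like $Ns^{-n}$ (not $Ns^{-Q/2}$, due to the vertical cutoff) while the geometric factor costs $s^{(Q-1)/2}$, and balancing $s^{-n}\sim N$ gives $Ns^{1/2}=N^{1-1/(Q-2)}$. As written, the proposal is a plausible plan whose two hardest components --- the frequency localization with a quantitative lower bound on the averaged Fourier transform of $\chi_{B_\rho}$, and the correct treatment of positivity on a non-commutative group --- are asserted rather than proved.
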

The proof of the theorem combines classical arguments of Roth and Montgomery with the harmonic analysis on the Heisenberg group, in particular with the use of the group Fourier transform defined in terms of the Schrödinger representations of $\mathbb{H}^n$.

Let us clarify why cylindrically symmetric sets arise naturally in our analysis. 
The group Fourier transform maps a function in $L^{1}(\mathbb H^{n})$ to a family of operators on $L^{2}(\mathbb R^{n})$. 
For cylindrically radial functions, these operators have a particularly convenient form: they are diagonal in the Hermite basis. 
This motivates defining the discrepancy with respect to the sets $B_\rho$, which form a simple, explicit family of cylindrically radial sets. 
Moreover, this choice allows us to exploit an explicit description of the Fourier transform of cylindrically radial functions on the Heisenberg group; see Section~\ref{s:transform_radial}.

From a geometric point of view, it would also be natural to consider discrepancy with respect to Heisenberg balls associated with the Korányi metric. The corresponding $L^{2}$-discrepancy problem, however, leads to a substantially different Fourier-analytic setting and is not addressed in this work. In the same circle of ideas, we point out to the reader the papers \cite{GNT, G}.

It is natural to ask whether Theorem \ref{MainResult} is sharp. In Section \ref{sec:upper bound} we apply a general upper bound for the discrepancy of Ahlfors regular measures on metric measure spaces to our setting, which yields \(N^{1-\frac{1}{Q}}\). At present, we do not know which rate is the correct one. The available results in the metric measure space setting are quite general and do not incorporate the geometry of the families of test sets used to evaluate the discrepancy; rather, they only depend on the dimension of their boundaries. Moreover, examples on the torus suggest that when the boundaries of the test sets exhibit singularities, the discrepancy can be smaller than what one would otherwise expect, see \cite{BBM2026} and \cite{BT2022}. This latter observation supports the conjecture that our lower bound is the optimal one.

The rest of the paper is organized as follows. Sections~\ref{s:heisenberg} and \ref{s:transform_radial} collect the basic material on the Heisenberg group and recall the harmonic analysis tools used throughout the paper. In Section \ref{s:main_result} we prove our main result (Theorem \ref{MainResult}); its proof relies  on two main estimates that will be proved in Sections \ref{sec:Estimate I} and \ref{sec:Estimate J}. The preliminary technical ingredients needed to prove such estimates are provided in Section \ref{s:estimates_laguerre}.

In the entire paper we adopt the following notation conventions. Given two quantities $A$ and $B$ that depend on some parameter $x$,
in the following we will write $A\approx B$ meaning that there are
two positive constants $c,C>0$ independent of the relevant values
of the parameter $x$ such that
\[
c\, B \leqslant A \leqslant C\, B.
\]
If one can take $c=C$, we also write $A\simeq B$. Moreover, we write $A\lesssim B$ (resp.\ $A\gtrsim B$) to denote a one-sided inequality
$A\leqslant C\,B$ (resp.\ $A\geqslant c\,B$) for some constant $C>0$ (resp.\ $c>0$).

We also do not attempt to keep track of constants: a positive constant, usually denoted by $c$, may change from line to line. Finally, whenever we are dealing with an integration on $\mathbb R^n$ or $\mathbb H^n$, unless specified, it is understood that the underlying measure is the Lebesgue measure.

We conclude the introduction by thanking Alessio Martini for stimulating discussions and
valuable suggestions concerning the heat kernel on the Heisenberg
group.

\section{The group Fourier transform}\label{s:heisenberg}
In this section we recall the definition of the group Fourier transform, its main properties and we state the Plancherel identity. For further details, we refer the reader to the standard references \cite{F, T_book}.

\subsection{The Schr\"odinger representation of $\mathbb{H}^{n}$ and the group
Fourier transform} Here and in the subsequent subsections we recall some facts about the group Fourier transform on $\mathbb H^n$. For details we refer the reader to the standard references \cite{F}, \cite{Geller}, and \cite{T_book}. 

Identifying $z=x+iy$ we can also describe $\mathbb H^n$ as the space $\mathbb R^{2n+1}$ endowed with the group operation
\[
\left(x_{1},y_{1},t_{1}\right)\circ\left(x_{2},y_{2},t_{2}\right)=\left(x_{1}+x_{2},y_{1}+y_{2},t_{1}+t_{2}+\frac{1}{2}\left(y_{1}x_{2}-x_{1}y_{2}\right)\right).
\]

Let $\lambda\neq0$, and consider the map $\pi_{\lambda}$ from $\mathbb{H}^{n}$
to the space of bounded linear operators on $L^{2}(\mathbb{R}^{n})$,
defined by
\[
\pi_{\lambda}(z,t)\varphi(\xi)=\pi_{\lambda}(x,y,t)\varphi(\xi)=e^{i\lambda\left(x\cdot\xi+\frac{1}{2}x\cdot y+t\right)}\varphi(\xi+y),
\]
for $(z,t)=(x,y,t)\in\mathbb{H}^{n}$ and $\varphi\in L^{2}(\mathbb{R}^{n})$.
This defines a unitary representation of the Heisenberg group $\mathbb{H}^{n}$
on the Hilbert space $L^{2}(\mathbb{R}^{n})$, known as the \emph{Schr\"odinger
representation}. Observe that setting $\pi_{\lambda}(z):=\pi_{\lambda}(z,0)$ we can
also write
\begin{equation}
\pi_{\lambda}\left(z,t\right)\varphi=e^{i\lambda t}\pi_{\lambda}\left(z\right)\varphi.\label{eq: pi lambde}
\end{equation}

The following proposition summarizes the main properties of $\pi_{\lambda}$.
\begin{prop}
\label{prop: proprieta Shrodinger}For every $\lambda\neq0$, the
following hold true.
\end{prop}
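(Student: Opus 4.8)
The plan is to verify the listed properties by a mixture of direct computation and standard abstract arguments. First I would check that each $\pi_{\lambda}(z,t)$ is unitary on $L^{2}(\mathbb{R}^{n})$: the map $\varphi(\xi)\mapsto\varphi(\xi+y)$ is an isometry because it is induced by a measure-preserving translation of $\mathbb{R}^{n}$, and multiplication by the factor $e^{i\lambda(x\cdot\xi+\frac12 x\cdot y+t)}$ is an isometry because that factor has modulus one; composing the two gives unitarity. To prove the homomorphism property $\pi_{\lambda}\big((x_{1},y_{1},t_{1})\circ(x_{2},y_{2},t_{2})\big)=\pi_{\lambda}(x_{1},y_{1},t_{1})\,\pi_{\lambda}(x_{2},y_{2},t_{2})$ one applies both sides to a test function $\varphi$, expands the exponentials, and checks that the total phases agree: the cocycle term $\tfrac12(y_{1}x_{2}-x_{1}y_{2})$ in the group law of $\mathbb{H}^{n}$ is exactly what reconciles the two sides, while the argument shift $\xi\mapsto\xi+y_{1}+y_{2}$ matches automatically. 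In particular $\pi_{\lambda}(0)=\mathrm{Id}$, whence $\pi_{\lambda}(z,t)^{-1}=\pi_{\lambda}\big((z,t)^{-1}\big)=\pi_{\lambda}(z,t)^{*}$; combined with \eqref{eq: pi lambde} this also yields the inverse and adjoint formulas expressed through $\pi_{\lambda}(z)$.

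Next I would establish strong continuity, i.e.\ that $(z,t)\mapsto\pi_{\lambda}(z,t)\varphi$ is continuous from $\mathbb{H}^{n}$ into $L^{2}(\mathbb{R}^{n})$ for every fixed $\varphi$. Writing $\pi_{\lambda}(z,t)\varphi=e^{i\lambda t}\,m_{x,y}\cdot(\tau_{y}\varphi)$, where $\tau_{y}$ denotes translation by $y$ and $m_{x,y}(\xi)=e^{i\lambda(x\cdot\xi+\frac12 x\cdot y)}$, the claim reduces to two classical facts: translation is strongly continuous on $L^{2}(\mathbb{R}^{n})$, and multiplication by the unimodular symbol $m_{x,y}$, which depends continuously on $(x,y)$ locally uniformly in $\xi$, is strongly continuous on $L^{2}$. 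A standard $\varepsilon/3$ argument, first approximating $\varphi$ by an element of $C_{c}(\mathbb{R}^{n})$ or of the Schwartz space, then gives the result; continuity in $t$ is immediate from the scalar factor $e^{i\lambda t}$.

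The core of the proposition — and the step I expect to be the main obstacle — is the irreducibility of $\pi_{\lambda}$. I would argue through Schur's lemma: let $T$ be a bounded operator on $L^{2}(\mathbb{R}^{n})$ commuting with $\pi_{\lambda}(z,t)$ for all $(z,t)$. Restricting to the subgroup $\{(0,y,0):y\in\mathbb{R}^{n}\}$ shows that $T$ commutes with all translations $\tau_{y}$, hence, after conjugating by the Euclidean Fourier transform, $T$ becomes a multiplication operator with some symbol $a\in L^{\infty}(\mathbb{R}^{n})$. Restricting to the subgroup $\{(x,0,0):x\in\mathbb{R}^{n}\}$ shows that $T$ commutes with all modulations $\varphi(\xi)\mapsto e^{i\lambda x\cdot\xi}\varphi(\xi)$, i.e.\ with all translations on the Fourier side, which forces $a$ to be a.e.\ constant; therefore $T$ is a scalar multiple of the identity, and $\pi_{\lambda}$ is irreducible. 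The only delicate point is the careful bookkeeping of these two abelian subactions and of the translation–modulation interplay on $L^{2}(\mathbb{R}^{n})$; the rest is formal.

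Finally, the remaining assertions — for instance the behaviour of $\pi_{\lambda}$ under the anisotropic dilations $D_{\rho}$, and the description of $\pi_{\lambda}(z)$ as the relevant twisted/Weyl operator — follow from the representation property together with the explicit formula for $\pi_{\lambda}$ by direct substitution. Alternatively, one may invoke the Stone–von Neumann theorem, which identifies the family $\{\pi_{\lambda}\}_{\lambda\neq0}$, together with the one-dimensional representations, as exhausting the unitary dual of $\mathbb{H}^{n}$; however, for the purposes of this paper only the elementary properties recalled above are actually needed.
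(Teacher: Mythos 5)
Your verification of the three stated properties (unitarity and surjectivity via the measure-preserving shift composed with a unimodular multiplier, the homomorphism property by matching phases against the cocycle term $\tfrac12(y_1x_2-x_1y_2)$, and strong continuity by the standard $\varepsilon/3$ density argument) is correct and is exactly the standard computation; the paper itself states this proposition without proof, deferring to the references. Note only that irreducibility of $\pi_\lambda$ and the Stone--von Neumann classification, to which you devote the bulk of your argument, are not part of the statement and are never used in the paper, so that material is superfluous here.
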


\begin{enumerate}[label=(\textit{\roman*})]
\item For every $(z,t)\in\mathbb{H}^{n}$, the operator $\pi_{\lambda}(z,t)$
is surjective, and for all $\varphi,\psi\in L^{2}(\mathbb{R}^{n})$,
we have
\[
\langle\pi_{\lambda}(z,t)\varphi,\pi_{\lambda}(z,t)\psi\rangle=\langle\varphi,\psi\rangle.
\]
In particular, $\pi_{\lambda}(z,t)$ is unitary.
\item The map $\pi_{\lambda}$ is a group homomorphism, i.e.,
\[
\pi_{\lambda}\left(\left(z,t\right)\circ\left(w,s\right)\right)=\pi_{\lambda}\left(z,t\right)\pi_{\lambda}\left(w,s\right).
\]
\item The representation $\pi_{\lambda}$ is strongly continuous with respect
to $(z,t)$, i.e., for every $\varphi\in L^2(\mathbb R^n)$, the map $(z,t)\mapsto \pi_\lambda(z,t)\varphi$ is continuous as a map from $\mathbb H^n$ to $\mathbb L^2(\mathbb R^n)$.
\end{enumerate}
\begin{defn}
Given $f\in L^{1}\left(\mathbb{H}^{n}\right)$, for every $\lambda\neq0$,
the group Fourier transform $\widehat{f}\left(\lambda\right)$  of $f$ is
the operator acting on $L^{2}\left(\mathbb{R}^{n}\right)$ defined
by
\[
\widehat{f}\left(\lambda\right)=\int_{\mathbb{H}^{n}}f\left(z,t\right)\pi_{\lambda}\left(z,t\right)dzdt
\]
where the integration is in the Bochner sense.
\end{defn}

\begin{rem}
The group Fourier transform can be naturally extended to a finite
measure $\sigma$ by 
\[
\widehat{\sigma}\left(\lambda\right)=\int_{\mathbb{H}^{n}}\pi_{\lambda}\left(z,t\right)d\sigma\left(z,t\right).
\]
We can also write the group Fourier transform in a slightly different
way. For $z\in\mathbb{C}^{n}$, define
\[
f^{\lambda}\left(z\right)=\int_{-\infty}^{+\infty}f\left(z,t\right)e^{i\lambda t}dt.
\]
Then, for every $\varphi\in L^{2}\left(\mathbb{R}^{n}\right)$, we
have
\begin{align*}
\widehat{f}\left(\lambda\right)\varphi\left(\xi\right)= & \int_{\mathbb{C}^{n}}\int_{-\infty}^{+\infty}f\left(z,t\right)e^{i\lambda t}\pi_{\lambda}\left(z\right)\varphi\left(\xi\right)dzdt\\
= & \int_{\mathbb{C}^{n}}f^{\lambda}\left(z\right)\pi_{\lambda}\left(z\right)\varphi\left(\xi\right)dz.
\end{align*}
\end{rem}

Let $f,g\in L^{1}\left(\mathbb{H}^{n}\right)$ their convolution is
defined by
\begin{align*}
f*g\left(z,t\right) & =\int_{\mathbb{H}^{n}}f\left(\rho,\tau\right)g\left(\left(\rho,\tau\right)^{-1}\circ\left(z,t\right)\right)d\rho d\tau.
\end{align*}

\noindent We have the following property.
\begin{thm}
\label{thm:trasf conv}Let $f,g\in L^{1}\left(\mathbb{H}^{n}\right)$. Then,
\[
\widehat{f*g}\left(\lambda\right)=\widehat{f}\left(\lambda\right)\widehat{g}\left(\lambda\right).
\]
\end{thm}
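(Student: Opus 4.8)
The plan is to unwind both definitions and interchange the order of integration, using the homomorphism property of $\pi_\lambda$ (Proposition~\ref{prop: proprieta Shrodinger}(ii)) together with the left-invariance of the Lebesgue measure on $\mathbb{H}^n$. First I would write, directly from the definitions of the group Fourier transform and of convolution,
\[
\widehat{f*g}\left(\lambda\right)=\int_{\mathbb{H}^{n}}\left(\int_{\mathbb{H}^{n}}f\left(\rho,\tau\right)g\left(\left(\rho,\tau\right)^{-1}\circ\left(z,t\right)\right)d\rho d\tau\right)\pi_{\lambda}\left(z,t\right)dzdt,
\]
where all integrals are understood in the Bochner sense (the inner one being just a scalar integral). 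The key preliminary remark is that this is an absolutely convergent Bochner integral: since $\pi_\lambda(z,t)$ is unitary, $\|\pi_\lambda(z,t)\|_{\mathrm{op}}=1$, and by the left-invariance of the measure $\int_{\mathbb{H}^n}\int_{\mathbb{H}^n}|f(\rho,\tau)|\,|g((\rho,\tau)^{-1}\circ(z,t))|\,d\rho d\tau\, dz dt=\|f\|_{L^1}\|g\|_{L^1}<\infty$. This legitimizes the use of the Fubini theorem for Bochner integrals.

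Interchanging the order of integration gives
\[
\widehat{f*g}\left(\lambda\right)=\int_{\mathbb{H}^{n}}f\left(\rho,\tau\right)\left(\int_{\mathbb{H}^{n}}g\left(\left(\rho,\tau\right)^{-1}\circ\left(z,t\right)\right)\pi_{\lambda}\left(z,t\right)dzdt\right)d\rho d\tau.
\]
In the inner integral I would perform the change of variables $(w,s)=(\rho,\tau)^{-1}\circ(z,t)$, equivalently $(z,t)=(\rho,\tau)\circ(w,s)$; since the Lebesgue measure on $\mathbb{H}^n$ is left-invariant we have $dzdt=dwds$. By Proposition~\ref{prop: proprieta Shrodinger}(ii), $\pi_\lambda(z,t)=\pi_\lambda((\rho,\tau)\circ(w,s))=\pi_\lambda(\rho,\tau)\pi_\lambda(w,s)$, and since $\pi_\lambda(\rho,\tau)$ is a fixed bounded operator it commutes with the Bochner integral, so the inner integral equals
\[
\pi_{\lambda}(\rho,\tau)\int_{\mathbb{H}^{n}}g(w,s)\pi_{\lambda}(w,s)\,dwds=\pi_{\lambda}(\rho,\tau)\,\widehat{g}(\lambda).
\]

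Substituting back and using once more that a fixed bounded operator (now $\widehat{g}(\lambda)$, applied on the right) commutes with the Bochner integral, we obtain
\[
\widehat{f*g}\left(\lambda\right)=\int_{\mathbb{H}^{n}}f(\rho,\tau)\,\pi_{\lambda}(\rho,\tau)\,\widehat{g}(\lambda)\,d\rho d\tau=\left(\int_{\mathbb{H}^{n}}f(\rho,\tau)\pi_{\lambda}(\rho,\tau)\,d\rho d\tau\right)\widehat{g}(\lambda)=\widehat{f}(\lambda)\widehat{g}(\lambda),
\]
which is the claim. There is no serious obstacle here: the only points requiring care are the operator-valued Fubini theorem and the fact that multiplication by a fixed bounded operator (on either side) passes through a Bochner integral, both of which are standard; the absolute-convergence bound above is exactly what is needed to invoke them.
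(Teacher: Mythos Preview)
Your proof is correct and is the standard argument. The paper itself does not prove this theorem: it is stated without proof as a known fact, with the surrounding section referring the reader to the standard references \cite{F, T_book} for details. Your write-up supplies exactly the expected justification (Fubini for Bochner integrals via the $L^1$ bound, left-invariance of the Haar measure, and the homomorphism property of $\pi_\lambda$), so there is nothing to compare against.
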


\subsection{Plancherel's theorem}
The group Fourier transform just introduced satisfies a Plancherel-type identity. In order to state it, we
need a few definitions. Let $\mathrm{HS}$ denote the Hilbert
space of Hilbert-Schmidt operators on $L^{2}\left(\mathbb{R}^{n}\right)$. Its inner product is defined by
\[
\left\langle S,T\right\rangle =\text{tr}\left(T^{*}S\right)=\sum_{j\in J}\left\langle T^{*}Se_{j},e_{j}\right\rangle 
\]
where $\left\{ e_{j}\right\} _{j\in J}$ is any orthonormal basis of $L^2(\mathbb R^n)$.
In particular, observe that 
\begin{align*}
\left\Vert T\right\Vert _{\mathrm{HS}}^{2}=\left\langle T,T\right\rangle  & =\sum_{j\in J}\left\langle T^{*}Te_{j},e_{j}\right\rangle =\sum_{j\in J}\left\langle Te_{j},Te_{j}\right\rangle =\sum_{j\in J}\left\Vert Te_{j}\right\Vert ^{2}.
\end{align*}
Let $\mathbb R^*=\mathbb R\backslash\{0\}$ and let $L^{2}\left(\mathbb{R}^{*},\mathrm{HS}\right)$ be the space of functions
$F:\mathbb{R}^{*}\to \mathrm{HS}$ such that
\[
\left\Vert F\right\Vert ^{2}=\frac{1}{\left(2\pi\right)^{n+1}}\int_{\mathbb{R}^*}\left\Vert F\left(\lambda\right)\right\Vert _{\mathrm{HS}}^{2}\left|\lambda\right|^{n}d\lambda<+\infty.
\]
We have the following result.
\begin{thm}
\label{thm:Plancherel}The group Fourier transform extends to an isometry
between $L^{2}\left(\mathbb{H}^{n}\right)$ and $L^{2}\left(\mathbb{R}^{*},\mathrm{HS}\right)$.
More precisely, we have
\[
\frac{1}{\left(2\pi\right)^{n+1}}\int_{\mathbb{R}^*}\big\Vert \widehat{f}\left(\lambda\right)\big\Vert _{\mathrm{HS}}^{2}\left|\lambda\right|^{n}d\lambda=\int_{\mathbb{H}^{n}}\left|f\left(x,y,t\right)\right|^{2}dxdydt.
\]
\end{thm}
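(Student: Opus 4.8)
The plan is to prove the Plancherel identity (Theorem~\ref{thm:Plancherel}) by reducing it, via the definition $f^\lambda(z)=\int_{\mathbb R} f(z,t)e^{i\lambda t}\,dt$, to the known Plancherel formula for the Weyl transform on $\mathbb C^n$. First I would recall that for fixed $\lambda\neq 0$ the operator $\widehat f(\lambda)$ acts as $\widehat f(\lambda)\varphi(\xi)=\int_{\mathbb C^n} f^\lambda(z)\pi_\lambda(z)\varphi(\xi)\,dz$, which is precisely (a rescaled version of) the Weyl transform $W_\lambda(f^\lambda)$ of the function $f^\lambda$ on $\mathbb C^n$. The core analytic fact I would invoke is the Plancherel theorem for the Weyl transform: for $g\in L^1\cap L^2(\mathbb C^n)$ one has $\|W_\lambda(g)\|_{\mathrm{HS}}^2=(2\pi)^n|\lambda|^{-n}\int_{\mathbb C^n}|g(z)|^2\,dz$ (the power of $|\lambda|$ coming from the change of variables $z\mapsto |\lambda|^{1/2}z$ that rescales $\pi_\lambda$ to $\pi_1$). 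This can be proved directly by diagonalizing $W_\lambda(g)$ in the Hermite basis and using the orthogonality of the special Hermite functions, or simply cited from \cite{F, T_book}.

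The second ingredient is the ordinary one-dimensional Plancherel theorem in the $t$-variable: the map $f(z,\cdot)\mapsto f^\lambda(z)$ is (up to the factor $2\pi$) the Euclidean Fourier transform, so $\int_{\mathbb R}|f^\lambda(z)|^2\,d\lambda=2\pi\int_{\mathbb R}|f(z,t)|^2\,dt$. Combining the two, I would compute, for $f$ in a suitable dense class (say Schwartz functions, or $L^1\cap L^2$ with enough decay so all integrals converge absolutely and Fubini applies),
\[
\frac{1}{(2\pi)^{n+1}}\int_{\mathbb R^*}\big\|\widehat f(\lambda)\big\|_{\mathrm{HS}}^2|\lambda|^n\,d\lambda
=\frac{1}{(2\pi)^{n+1}}\int_{\mathbb R^*}(2\pi)^n|\lambda|^{-n}\Big(\int_{\mathbb C^n}|f^\lambda(z)|^2\,dz\Big)|\lambda|^n\,d\lambda,
\]
and then Tonelli in $(\lambda,z)$ plus the $t$-Plancherel identity collapse this to $\frac{1}{2\pi}\int_{\mathbb C^n}\int_{\mathbb R}|f^\lambda(z)|^2\,d\lambda\,dz=\int_{\mathbb C^n}\int_{\mathbb R}|f(z,t)|^2\,dt\,dz=\|f\|_{L^2(\mathbb H^n)}^2$. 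This establishes the isometry on a dense subspace.

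Finally, to upgrade to all of $L^2(\mathbb H^n)$ I would use a standard density/continuity argument: the identity just proved shows that $f\mapsto\widehat f$ is an isometry from a dense subspace of $L^2(\mathbb H^n)$ into $L^2(\mathbb R^*,\mathrm{HS})$, hence it extends uniquely to an isometry on the closure; surjectivity onto $L^2(\mathbb R^*,\mathrm{HS})$ follows because the image is closed (being the isometric image of a complete space) and dense (one exhibits enough operator-valued functions in the range, again via the Weyl-transform picture and the fact that finite-rank Hermite-basis operators are hit). The main obstacle is bookkeeping the normalization constants: one must track carefully the factors of $2\pi$ and the powers of $|\lambda|$ arising from (i) the definition of the inner product on $L^2(\mathbb R^*,\mathrm{HS})$ with weight $|\lambda|^n/(2\pi)^{n+1}$, (ii) the rescaling $z\mapsto|\lambda|^{1/2}z$ relating $\pi_\lambda$ to $\pi_1$, and (iii) the Euclidean Fourier–Plancherel constant in the $t$-variable; getting these to cancel exactly is the heart of the computation, while the functional-analytic extension is routine.
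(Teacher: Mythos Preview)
Your proposal is correct and follows the standard route (partial Fourier transform in $t$ reducing to the Weyl-transform Plancherel, then one-dimensional Plancherel in $\lambda$, then density). Note, however, that the paper does not give its own proof of Theorem~\ref{thm:Plancherel}: it is stated as background material, with the reader referred to the standard references \cite{F, T_book}. What you have sketched is essentially the argument found in those references, so there is no divergence in approach to discuss; your computation of the constants also checks out with the paper's normalizations (in particular, the definition of $\Phi_{\alpha,\beta}^\lambda$ in Definition~\ref{def:Phi_alpha_beta} makes $\{\Phi_{\alpha,\beta}^\lambda\}$ orthonormal in $L^2(\mathbb C^n)$, which yields exactly $\|\widehat f(\lambda)\|_{\mathrm{HS}}^2=(2\pi/|\lambda|)^n\|f^\lambda\|_{L^2(\mathbb C^n)}^2$ as you claim).
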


\section{The Fourier transform of cylindrically radial functions}\label{s:transform_radial}
In this section we recall the results concerning the Fourier transform of (cylindrically) radial functions. To this end, we first introduce Laguerre and Hermite polynomials and functions. We refer the reader to \cite{T_book} for details.

\subsection{Generalized Laguerre polynomials and functions}
For every $\delta\geqslant0$ and $k\in\mathbb{N}$ the generalized
Laguerre polynomials of type $\delta$, are defined by
\[
L_{k}^{\delta}\left(t\right)=t^{-\delta}e^{t}\frac{1}{k!}\frac{d^{k}}{dt^{k}}\left(e^{-t}t^{k+\delta}\right).
\]
When $\delta=0$ we will also write $L_{k}\left(t\right)$ in place
of $L_{k}^{0}\left(t\right)$. We also define the Laguerre functions
by 
\[
\Lambda_{k}^{\delta}\left(t\right)=r^\delta_k L_{k}^{\delta}\left(t\right)e^{-\frac{1}{2}t}t^{\delta/2}.
\]
where
\begin{equation}\label{eq:r_k}
r_{k}^{\delta}=\left(\frac{k!}{\Gamma\left(k+\delta+1\right)}\right)^{1/2}.
\end{equation}
When $\delta=n-1$, we will also use the following rescaled Laguerre
functions defined in $\mathbb{C}^{n}$ by
\begin{equation}
\varphi_{k}^{\lambda}\left(z\right)=\left(\frac{\left|\lambda\right|}{2\pi}\right)^{n}L_{k}^{n-1}\left(\frac{1}{2}\left|\lambda\right|\left|z\right|^{2}\right)e^{-\frac{1}{4}\left|\lambda\right|\left|z\right|^{2}}.\label{eq:def phi lambda k}
\end{equation}

\subsection{Hermite polynomials and functions}

For every $k\in\mathbb{N}$, Hermite polynomials $H_{k}\left(t\right)$
are defined by
\[
H_{k}\left(t\right)=\left(-1\right)^{k}e^{t^{2}}\frac{d^{k}}{dt^{k}}\left\{ e^{-t^{2}}\right\} .
\]
It is immediate to show that $H_{k}\left(t\right)$ is a polynomial
of degree $k$. The normalized Hermite functions are defined by
\[
h_{k}\left(t\right)=\frac{1}{\sqrt{2^{k}k!\sqrt{\pi}}}H_{k}\left(t\right)e^{-t^{2}/2}.
\]
It is well known that the Hermite functions $h_{k}\left(t\right)$
are a complete orthonormal system in $L^{2}\left(\mathbb{R}\right)$.
We now define Hermite functions in $\mathbb{R}^{n}$ as a tensor product
of one-variable Hermite functions. Let $\alpha\in\mathbb{N}^{n}$,
$x\in\mathbb{R}^{n}$, and let
\[
\Phi_{\alpha}\left(x\right)=\prod_{j=1}^{n}h_{\alpha_{j}}\left(x_{j}\right).
\]
The family $\left\{ \Phi_{\alpha}\right\} _{\alpha\in\mathbb{N}^{n}}$
is a complete orthonormal system in $L^{2}\left(\mathbb{R}^{n}\right)$.

For every $\lambda\neq0$, we also introduce the rescaled Hermite
functions by
\[
\Phi_{\alpha}^{\lambda}\left(x\right)=\left|\lambda\right|^{n/4}\Phi_{\alpha}\left(\left|\lambda\right|^{1/2}x\right).
\]
Again these functions are an orthonormal basis of $L^{2}\left(\mathbb{R}^{n}\right)$.
\begin{defn}
\label{def:Phi_alpha_beta}For every $\alpha,\beta\in\mathbb{N}^{n}$
and $\lambda\neq0$, the special Hermite functions are defined by
\[
\Phi_{\alpha,\beta}^{\lambda}\left(z\right)=\left(\frac{\left|\lambda\right|}{2\pi}\right)^{n/2}\left\langle \pi_{\lambda}\left(z\right)\Phi_{\alpha}^{\lambda},\Phi_{\beta}^{\lambda}\right\rangle .
\]
\end{defn}

%\begin{prop}
%\label{prop:Phi_alf_alf_Laguerre}The special Hermite functions $\left\{ \Phi_{\alpha,\beta}^{\lambda}\right\} _{\alpha,\beta\in\mathbb{N}^{n}}$
%are an orthonormal basis of $L^{2}\left(\mathbb{C}^{n}\right).$ Moreover,
%for every multiindex %$\alpha=\left(\alpha_{1},\ldots,\alpha_{n}\right)$
%we have
% \begin{equation}
% \Phi_{\alpha,\alpha}^{\lambda}\left(z\right)=\left(\frac{\left|\lambda\right|}{2\pi}\right)^{n/2}\prod_{j=1}^{n}L_{\alpha_{j}}\left(\frac{1}{2}\left|\lambda\right|\left|z_{j}\right|^{2}\right)e^{-\frac{1}{4}\left|\lambda\right|\left|z_{j}\right|^{2}}.\label{eq:Phi_alpha2}
% \end{equation}
% \end{prop}
We also have the following identity which relates the rescaled Hermite functions and the rescaled Laguerre functions.
\begin{prop}
\label{prop:phi_k}Let $k\in\mathbb{N}$. Then
\[
\sum_{\left|\alpha\right|=k}\left\langle \pi_{\lambda}\left(z\right)\Phi_{\alpha}^{\lambda},\Phi_{\alpha}^{\lambda}\right\rangle =\left(\frac{2\pi}{\left|\lambda\right|}\right)^{n}\varphi_{k}^{\lambda}\left(z\right).
\]
\end{prop}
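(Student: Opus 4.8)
The plan is to compute the left-hand side by recognizing the diagonal matrix coefficients $\langle \pi_\lambda(z)\Phi_\alpha^\lambda,\Phi_\alpha^\lambda\rangle$ of the Schrödinger representation as rescaled versions of the special Hermite functions $\Phi_{\alpha,\alpha}^\lambda$ from Definition~\ref{def:Phi_alpha_beta}, and then to invoke the classical Laguerre generating-function identity that collapses the sum over a fixed level $|\alpha| = k$ into a single Laguerre polynomial. By the very definition of $\Phi_{\alpha,\beta}^\lambda$ we have $\langle\pi_\lambda(z)\Phi_\alpha^\lambda,\Phi_\alpha^\lambda\rangle = (2\pi/|\lambda|)^{n/2}\,\Phi_{\alpha,\alpha}^\lambda(z)$, so the asserted identity is equivalent to
\[
\sum_{|\alpha| = k}\Phi_{\alpha,\alpha}^\lambda(z) = \left(\frac{2\pi}{|\lambda|}\right)^{n/2}\varphi_k^\lambda(z).
\]
First I would reduce to the case $\lambda = 1$ by the scaling relation $\Phi_\alpha^\lambda(x) = |\lambda|^{n/4}\Phi_\alpha(|\lambda|^{1/2}x)$ together with the explicit form of $\pi_\lambda(z)$, which shows that $\Phi_{\alpha,\alpha}^\lambda(z)$ is obtained from $\Phi_{\alpha,\alpha}^1$ by the substitution $z\mapsto |\lambda|^{1/2}z$ (up to the power of $|\lambda|/2\pi$ built into Definition~\ref{def:Phi_alpha_beta}); the same substitution relates $\varphi_k^\lambda$ to $\varphi_k^1$ through \eqref{eq:def phi lambda k}. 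This isolates a statement purely about the ordinary special Hermite functions on $\mathbb{C}^n$.

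Next I would use the tensor-product structure. Since $\Phi_\alpha = \prod_j h_{\alpha_j}$ and $\pi_1(z)$ acts coordinatewise (the phase factor $e^{i(x\cdot\xi + \frac12 x\cdot y)}$ and the translation $\varphi(\xi)\mapsto\varphi(\xi+y)$ both split over the $n$ coordinates), the matrix coefficient factors as
\[
\Phi_{\alpha,\alpha}^1(z) = \prod_{j=1}^n \Phi_{\alpha_j,\alpha_j}^{(1)}(z_j),
\]
where $\Phi_{m,m}^{(1)}$ denotes the one-dimensional special Hermite function. For $n = 1$ one has the classical formula
\[
\Phi_{m,m}^{(1)}(z) = \frac{1}{2\pi}\,L_m\!\left(\tfrac12|z|^2\right)e^{-\frac14|z|^2},
\]
which can be derived directly from Mehler's formula for the Hermite kernel, or quoted from \cite{T_book}. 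Then the sum over the level set becomes
\[
\sum_{|\alpha| = k}\prod_{j=1}^n \Phi_{\alpha_j,\alpha_j}^{(1)}(z_j) = \left(\frac{1}{2\pi}\right)^n e^{-\frac14|z|^2}\sum_{\alpha_1+\cdots+\alpha_n = k}\prod_{j=1}^n L_{\alpha_j}\!\left(\tfrac12|z_j|^2\right),
\]
and the inner sum is exactly the convolution identity for Laguerre polynomials,
\[
\sum_{\alpha_1+\cdots+\alpha_n = k}\prod_{j=1}^n L_{\alpha_j}(t_j) = L_k^{n-1}(t_1+\cdots+t_n),
\]
a standard consequence of multiplying the generating functions $\sum_m L_m(t)s^m = (1-s)^{-1}e^{-ts/(1-s)}$ and extracting the coefficient of $s^k$. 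Applying this with $t_j = \frac12|z_j|^2$, so that $t_1+\cdots+t_n = \frac12|z|^2$, yields $(2\pi)^{-n}L_k^{n-1}(\frac12|z|^2)e^{-\frac14|z|^2}$, which upon multiplying by the prefactor $(2\pi)^n$ from Definition~\ref{def:Phi_alpha_beta} and comparing with \eqref{eq:def phi lambda k} (at $\lambda = 1$) gives precisely $\varphi_k^1(z)$. Reinstating $\lambda$ via the scaling of the first paragraph completes the proof.

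The computation is essentially a bookkeeping exercise, so the main obstacle is not conceptual but organizational: one must get the normalizing powers of $|\lambda|/2\pi$ to match on both sides, since they enter the definition of $\Phi_{\alpha,\beta}^\lambda$ (a factor $(|\lambda|/2\pi)^{n/2}$), the definition of $\varphi_k^\lambda$ (a factor $(|\lambda|/2\pi)^n$), and the rescaling $\Phi_\alpha^\lambda \mapsto |\lambda|^{n/4}\Phi_\alpha(|\lambda|^{1/2}\cdot)$; a careless tracking of these constants is the only real risk. The one genuine ingredient one must cite or reprove is the $n=1$ identity $\Phi_{m,m}^{(1)}(z) = (2\pi)^{-1}L_m(\frac12|z|^2)e^{-\frac14|z|^2}$, for which I would reference \cite{T_book}; alternatively, the whole proposition can be extracted directly from the known formula $\Phi_{\alpha,\beta}^\lambda(z)$ in terms of Laguerre functions of type $|\alpha-\beta|$ specialized to $\alpha=\beta$, again available in \cite{T_book}.
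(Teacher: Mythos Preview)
Your proof is correct and follows the standard route (tensor factorization of the Schr\"odinger matrix coefficients, the one-dimensional identity $\Phi^{(1)}_{m,m}(z)=(2\pi)^{-1}L_m(\tfrac12|z|^2)e^{-\frac14|z|^2}$, and the Laguerre convolution formula $\sum_{\alpha_1+\cdots+\alpha_n=k}\prod_j L_{\alpha_j}(t_j)=L_k^{n-1}(t_1+\cdots+t_n)$). The paper does not supply a proof of this proposition at all: it is simply stated as a known identity, with the implicit reference being \cite{T_book} (where exactly the argument you outline appears). So there is nothing to compare against; your write-up could stand in for the omitted proof, and your own caveat about tracking the powers of $|\lambda|/2\pi$ is the only place care is needed.
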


\subsection{Group Fourier transform of radial functions}
Given $f\in L^{1}\left(\mathbb{H}^{n}\right)$ we say that $f$
is (cylindrically) radial if there exists a function $f_{\#}$ such
that for every $\left(z,t\right)\in\mathbb{H}^{n}$ we have
\[
f\left(z,t\right)=f_{\#}\left(\left|z\right|,t\right).
\]

The group Fourier transform of a cylindrically radial function
diagonalizes on rescaled Hermite functions, see  \cite{Geller}. With a small abuse of
notation we will denote with $\widehat{f}$ also the eigenvalues of
the operator. More precisely we have the following:
\begin{thm}
\label{thm:Fourier transf radial}Let $f\in L^{1}\left(\mathbb{H}^{n}\right)$
and assume that $f\left(z,t\right)=f_{\#}\left(\left|z\right|,t\right)$.
Then
\[
\widehat{f}\left(\lambda\right)\Phi_{\alpha}^{\lambda}=c_{n}\widehat{f}\left(\lambda,\left|\alpha\right|\right)\cdot\Phi_{\alpha}^{\lambda},
\]
where $c_{n}=\text{\ensuremath{\left(2\pi\right)^{n}2^{1-n}}}$,
\begin{align*}
\widehat{f}\left(\lambda,k\right)= & r_{k}\int_{0}^{+\infty}f_{\#}^{\lambda}\left(r\right)\left(\left|\lambda\right|r^{2}\right)^{\frac{1-n}2}\Lambda_{k}^{n-1}\left(\frac{1}{2}\left|\lambda\right|r^{2}\right)r^{2n-1}dr,
\end{align*}
and $r_k=r_k^{n-1}$ as defined in \eqref{eq:r_k}. Moreover, if $f$ is in the Schwartz class, we have the reconstruction formula 
\begin{align}\label{thm: ricostruzione}
\begin{split}
f\left(z,t\right)= & \frac{2^{\frac{1-n}2}}{2\pi}\int_{-\infty}^{+\infty}e^{-it\lambda}\sum_{k=0}^{+\infty}\widehat{f}\left(\lambda,k\right)L_{k}^{n-1}\left(\frac{1}{2}\left|\lambda\right|\left|z\right|^{2}\right)e^{-\frac{1}{4}\left|\lambda\right|\left|z\right|^{2}}\left|\lambda\right|^{n}d\lambda\\
= & 2^{\frac{1-n}2}\left(2\pi\right)^{n-1}\int_{-\infty}^{+\infty}e^{-it\lambda}\sum_{k=0}^{+\infty}\widehat{f}\left(\lambda,k\right)\varphi_{k}^{\lambda}\left(z\right)d\lambda.
\end{split}
\end{align}

\end{thm}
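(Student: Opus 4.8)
The plan is to read both statements off the alternative formula $\widehat f(\lambda)\varphi=\int_{\mathbb C^{n}}f^{\lambda}(z)\,\pi_{\lambda}(z)\varphi\,dz$ for the group Fourier transform (from the Remark following its definition), exploiting that radiality of $f$, i.e.\ $f(z,t)=f_{\#}(|z|,t)$, forces $f^{\lambda}(z)=f_{\#}^{\lambda}(|z|)$ to be radial on $\mathbb C^{n}$, so that $\widehat f(\lambda)$ is the Weyl transform of a radial function. The key structural fact is that on such functions the Weyl transform is block–diagonal along the Hermite levels. Concretely, set $\mathcal H_{k}^{\lambda}=\mathrm{span}\{\Phi_{\alpha}^{\lambda}:|\alpha|=k\}$, so that $L^{2}(\mathbb R^{n})=\bigoplus_{k\geqslant0}\mathcal H_{k}^{\lambda}$, and let $P_{k}^{\lambda}$ be the orthogonal projection onto $\mathcal H_{k}^{\lambda}$. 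From Definition~\ref{def:Phi_alpha_beta} and Proposition~\ref{prop:phi_k} one reads off $\sum_{|\alpha|=k}\Phi_{\alpha,\alpha}^{\lambda}(z)=(2\pi/|\lambda|)^{n/2}\varphi_{k}^{\lambda}(z)$, and since the Weyl transform sends each special Hermite function $\Phi_{\alpha,\beta}^{\lambda}$ to a rank‑one operator, $\int_{\mathbb C^{n}}\varphi_{k}^{\lambda}(z)\pi_{\lambda}(z)\,dz$ is a scalar multiple of $P_{k}^{\lambda}$. As $\{\varphi_{k}^{\lambda}\}_{k\geqslant0}$ spans the radial functions on $\mathbb C^{n}$, expanding $f^{\lambda}$ in this system and applying the Weyl transform termwise shows that $\widehat f(\lambda)$ is a scalar on each $\mathcal H_{k}^{\lambda}$, which we name $c_{n}\widehat f(\lambda,k)$; this gives the first displayed formula, up to the value of the scalar. (Equivalently, radiality of $f^{\lambda}$ makes $\widehat f(\lambda)$ commute with the metaplectic $U(n)$–action on $L^{2}(\mathbb R^{n})$, under which each $\mathcal H_{k}^{\lambda}\cong\mathrm{Sym}^{k}(\mathbb C^{n})$ is irreducible and the levels are pairwise inequivalent, so Schur's lemma applies.)

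To identify the scalar I would take the trace over $\mathcal H_{k}^{\lambda}$:
\[
c_{n}\widehat f(\lambda,k)\,\dim\mathcal H_{k}^{\lambda}=\sum_{|\alpha|=k}\big\langle\widehat f(\lambda)\Phi_{\alpha}^{\lambda},\Phi_{\alpha}^{\lambda}\big\rangle=\int_{\mathbb C^{n}}f_{\#}^{\lambda}(|z|)\sum_{|\alpha|=k}\big\langle\pi_{\lambda}(z)\Phi_{\alpha}^{\lambda},\Phi_{\alpha}^{\lambda}\big\rangle\,dz.
\]
By Proposition~\ref{prop:phi_k} together with \eqref{eq:def phi lambda k} the inner sum collapses to $L_{k}^{n-1}(\tfrac12|\lambda||z|^{2})e^{-\frac14|\lambda||z|^{2}}$; passing to polar coordinates on $\mathbb C^{n}\simeq\mathbb R^{2n}$ (the surface measure of $S^{2n-1}$ being $2\pi^{n}/\Gamma(n)$) and writing $\Lambda_{k}^{n-1}(\tfrac12|\lambda|r^{2})=r_{k}L_{k}^{n-1}(\tfrac12|\lambda|r^{2})e^{-\frac14|\lambda|r^{2}}(\tfrac12|\lambda|r^{2})^{(n-1)/2}$ turns the right‑hand side into the integral defining $\widehat f(\lambda,k)$ up to explicit constants. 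Matching these, using $\dim\mathcal H_{k}^{\lambda}=\binom{n+k-1}{k}$ and the identity $\binom{n+k-1}{k}(r_{k})^{2}=1/\Gamma(n)$ from \eqref{eq:r_k} — which absorbs $\dim\mathcal H_{k}^{\lambda}$, the two occurrences of $r_{k}$ combining into $(r_{k})^{2}$ — yields the asserted formula for $\widehat f(\lambda,k)$ with the stated $c_{n}$.

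For the reconstruction formula I would assume $f$ in the Schwartz class, so each $f_{\#}^{\lambda}$ is Schwartz, and argue one variable at a time. For fixed $\lambda\neq0$ the substitution $s=\tfrac12|\lambda|r^{2}$ identifies $\widehat f(\lambda,k)$, up to a $\lambda$–dependent factor, with the $k$‑th coefficient of $f_{\#}^{\lambda}$ in the orthonormal Laguerre system $\{r_{k}L_{k}^{n-1}(s)e^{-s/2}\}_{k\geqslant0}$ of $L^{2}((0,\infty),s^{n-1}\,ds)$. Expanding $f_{\#}^{\lambda}$ in this basis, undoing the substitution, and invoking \eqref{eq:def phi lambda k} recovers $f^{\lambda}(z)=2^{(1-n)/2}|\lambda|^{n}\sum_{k}\widehat f(\lambda,k)L_{k}^{n-1}(\tfrac12|\lambda||z|^{2})e^{-\frac14|\lambda||z|^{2}}$; the one–dimensional Fourier inversion $f(z,t)=\frac1{2\pi}\int e^{-it\lambda}f^{\lambda}(z)\,d\lambda$ then produces \eqref{thm: ricostruzione}, in the $L_{k}^{n-1}$ form and, again via \eqref{eq:def phi lambda k}, in the $\varphi_{k}^{\lambda}$ form. (Alternatively, one feeds the diagonalization of the previous steps into the inversion formula attached to Plancherel's theorem, Theorem~\ref{thm:Plancherel}, evaluating $\mathrm{tr}(\pi_{\lambda}(z,t)^{*}\widehat f(\lambda))$ by Proposition~\ref{prop:phi_k}.)

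The load‑bearing points, and where I expect the real work, are twofold. First, the diagonalization: either proving $\int_{\mathbb C^{n}}\varphi_{k}^{\lambda}(z)\pi_{\lambda}(z)\,dz\propto P_{k}^{\lambda}$ together with the completeness of $\{\varphi_{k}^{\lambda}\}$ among radial functions, or setting up the metaplectic intertwining $\pi_{\lambda}(Uz)=\mu_{\lambda}(U)\pi_{\lambda}(z)\mu_{\lambda}(U)^{*}$ and the $U(n)$‑irreducibility of the levels. Second, the normalization bookkeeping: the powers of $|\lambda|/2\pi$, the factors $r_{k}$, the half‑integer powers of $2$ generated by the $\tfrac12$ inside $L_{k}^{n-1}$ and $\Lambda_{k}^{n-1}$, and the sphere measure $2\pi^{n}/\Gamma(n)$ must all be tracked precisely in order to land exactly on $c_{n}$. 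A last, more routine issue is justifying the interchange of the sum over $k$ and the integral over $\lambda$ in the reconstruction, which follows from the Schwartz decay of $f$ (rapid decay in $k$ of $\widehat f(\lambda,k)$, locally uniform in $\lambda$, plus integrability in $\lambda$).
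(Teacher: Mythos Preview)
The paper does not prove this theorem: it is stated as a known result, with the diagonalization attributed to \cite{Geller} and the surrounding material referred to \cite{T_book}. So there is no in-paper proof to compare against.

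That said, your proposal outlines a correct and standard argument. The Schur's-lemma route (radiality of $f^{\lambda}$ forces $\widehat f(\lambda)$ to commute with the metaplectic $U(n)$-action, and the Hermite levels $\mathcal H_{k}^{\lambda}$ are irreducible and pairwise inequivalent) is the cleanest way to get the diagonalization, and your trace computation via Proposition~\ref{prop:phi_k} is exactly how one pins down the scalar. The reconstruction argument via the Laguerre orthonormal basis on $L^{2}((0,\infty),s^{n-1}\,ds)$ followed by one-dimensional Fourier inversion in $t$ is also the right idea. Your own caveat about the constant bookkeeping is well placed: that is where most of the effort lies, and your sketch stops short of actually carrying it through to verify $c_{n}=(2\pi)^{n}2^{1-n}$ and the precise prefactors in \eqref{thm: ricostruzione}.
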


% We need this
% \begin{thm}
% \label{thm: ricostruzione}We have
% \begin{align*}
% f\left(z,t\right)= & \frac{2^{\left(1-n\right)/2}}{2\pi}\int_{-\infty}^{+\infty}e^{-it\lambda}\sum_{k=0}^{+\infty}\widehat{f}\left(\lambda,k\right)L_{k}^{n-1}\left(\frac{1}{2}\left|\lambda\right|\left|z\right|^{2}\right)e^{-\frac{1}{4}\left|\lambda\right|\left|z\right|^{2}}\left|\lambda\right|^{n}d\lambda\\
% = & 2^{\left(1-n\right)/2}\left(2\pi\right)^{n-1}\int_{-\infty}^{+\infty}e^{-it\lambda}\sum_{k=0}^{+\infty}\widehat{f}\left(\lambda,k\right)\varphi_{k}^{\lambda}\left(z\right)d\lambda.
% \end{align*}
% \end{thm}
We conclude the section recalling how the Fourier transform interacts with the dilations $D_\rho$. Let $f\in L^{1}\left(\mathbb{H}^{n}\right)$ be a radial function
and for every $\rho>0$, let 
\[
f_{\rho}\left(z,t\right)=f\left(D_{\rho^{-1}}\left(z,t\right)\right)=f_{\#}\left(\rho^{-1}\left|z\right|,\rho^{-2}t\right).
\]
We claim that
\begin{align}
\widehat{f_{\rho}}\left(\lambda,k\right)=\rho^{2n+2}\widehat{f}\left(\rho^{2}\lambda,k\right).
\label{eq:Dilatazioni fourier}
\end{align}
Indeed, since
\begin{align*}
\left(f_{\rho}\right)_{\#}^{\lambda}\left(r\right)= & \int_{-\infty}^{+\infty}\left(f_{\rho}\right)_{\#}\left(r,t\right)e^{i\lambda t}dt=\int_{-\infty}^{+\infty}f_{\#}\left(\rho^{-1}r,\rho^{-2}t\right)e^{i\lambda t}dt\\
= & \rho^{2}\int_{-\infty}^{+\infty}f_{\#}\left(\rho^{-1}r,u\right)e^{i\lambda\rho^{2}u}du=\rho^{2}f_{\#}^{\rho^{2}\lambda}\left(\rho^{-1}r\right),
\end{align*}
we have
\begin{align*}
\widehat{f_{\rho}}\left(\lambda,k\right)= & r_{k}\int_{0}^{+\infty}\rho^{2}f_{\#}^{\rho^{2}\lambda}\left(\rho^{-1}r\right)\left(\left|\lambda\right|r^{2}\right)^{\left(1-n\right)/2}\Lambda_{k}^{n-1}\left(\frac{1}{2}\left|\lambda\right|r^{2}\right)r^{2n-1}dr\nonumber \\
= & r_{k}\int_{0}^{+\infty}\rho^{2n+2}f_{\#}^{\rho^{2}\lambda}\left(s\right)\left(\left|\lambda\right|\rho^{2}s^{2}\right)^{\left(1-n\right)/2}\Lambda_{k}^{n-1}\left(\frac{1}{2}\left|\lambda\right|\rho^{2}s^{2}\right)s^{2n-1}ds\\
= & \rho^{2n+2}\widehat{f}\left(\rho^{2}\lambda,k\right).\nonumber 
\end{align*}

\section{Proof of the Main Result}\label{s:main_result}
Adopting a strategy inspired
by Roth and Montgomery, we reduce the argument of the proof of Theorem \ref{MainResult} to the derivation of lower bounds
for two different contributions, one of geometric nature and one related
to the point distribution. The corresponding estimates, stated in
(\ref{eq: stima I}) and (\ref{eq:stima J}), both require nontrivial
additional work, which constitutes the main content of the rest of the paper.

% For every $\rho>0$, let $B_{\rho}=\left\{ \left(z,t\right)\in\mathbb{H}^{n}:\left|z\right|\leqslant\rho,\left|t\right|\leqslant\rho^{2}\right\} $,
% and let $\mu$ be a probability measure on $\mathbb{H}^{n}$. For
% a given finite sequence of points $\left\{ \left(\zeta_{j},\tau_{j}\right)\right\} _{j=1}^{N}\subseteq\mathbb{H}^{n}$
% we define the discrepancy
% \begin{align*}
% D_{N}\left(z,t;\rho\right)= & \sum_{j=1}^{N}\chi_{\left(z,t\right)\circ B_{\rho}}\left(\zeta_{j},\tau_{j}\right)-N\mu\left(\left(z,t\right)\circ B_{\rho}\right)
% \end{align*}
% where $\left(z,t\right)\circ B_{\rho}=\left\{ \left(w,s\right)\in\mathbb{H}^{n}:\left(z,t\right)^{-1}\circ\left(w,s\right)\in B_{\rho}\right\} $
% is the left translation of $B_{\rho}$. 

% Our main result is the following.
% \begin{thm}\label{MainResult}
% Let $\mu$ be a probability measure on $\mathbb{H}^{d}$ which is
% Ahlfors upper regular, namely there exists $c>0$ such that $\mu\left(\mathfrak{B}_{r}\right)\leqslant cr^{Q}$,
% where $\mathfrak{B}_{r}$ denotes the Heisenberg ball of radius $r>0$.
% There exists a constant $C>0$ such that for every finite sequence
% of $N$ points in $\mathbb{H}^{n}$, with $N$ large enough, we have
% \[
% \int_{0}^{1}\int_{\mathbb{H}^{n}}\left|D_{N}\left(z,t;\rho\right)\right|^{2}dzdt\,d\rho\geqslant CN^{1-\frac{1}{Q-2}}.
% \]
% \end{thm}

\begin{proof}[Proof of Theorem \ref{MainResult}]
Let us rewrite the discrepancy for the points $\left\{ \left(\zeta_{j},\tau_{j}\right)\right\} _{j=1}^{N}\subseteq\mathbb{H}^{n}$
in a more convenient form. Since $\chi_{B_{\rho}}\left(w,s\right)=\chi_{B_{\rho}}\left(\left(w,s\right)^{-1}\right)$,
we have
\begin{align*}
D_{N}\left(z,t;\rho\right)= & \sum_{j=1}^{N}\chi_{B_{\rho}}\left(\left(\zeta_{j},\tau_{j}\right)^{-1}\circ\left(z,t\right)\right)-N\int_{\mathbb{H}^{n}}\chi_{B_{\rho}}\left(\left(\zeta,\tau\right)^{-1}\circ\left(z,t\right)\right)d\mu\left(\zeta,\tau\right)\\
= & \left(\sum_{j=1}^{N}\delta_{\left(\zeta_{j},\tau_{j}\right)}\right)*\chi_{B_{\rho}}\left(z,t\right)-N\mu*\chi_{B_{\rho}}\left(z,t\right)=\sigma*\chi_{B_{\rho}}\left(z,t\right)
\end{align*}
where we set 
\[
\sigma=\sum_{j=1}^{N}\delta_{\left(\zeta_{j},\tau_{j}\right)}-N\mu
\]
and $\delta$ denotes the Dirac measure. Using Theorem \ref{thm:Plancherel},
Theorem \ref{thm:trasf conv}, the fact that $\chi_{B\rho}$ is radial,
and Theorem \ref{thm:Fourier transf radial}, we get
\begin{align*}
\int_{\mathbb{H}^{n}}\left|D_{N}\left(z,t;\rho\right)\right|^{2}dzdt\simeq & \int_{-\infty}^{+\infty}\left\Vert \widehat{\sigma}\left(\lambda\right)\widehat{\chi_{B_{\rho}}}\left(\lambda\right)\right\Vert _{\mathrm{HS}}^{2}\left|\lambda\right|^{n}d\lambda\\
= & \int_{-\infty}^{+\infty}\sum_{\alpha\in\mathbb{N}^{n}}\left\Vert \widehat{\sigma}\left(\lambda\right)\widehat{\chi_{B_{\rho}}}\left(\lambda\right)\Phi_{\alpha}^{\lambda}\right\Vert _{L^{2}\left(\mathbb{R}^{n}\right)}^{2}\left|\lambda\right|^{n}d\lambda\\
\simeq & \int_{-\infty}^{+\infty}\sum_{\alpha\in\mathbb{N}^{n}}\left|\widehat{\chi_{B_{\rho}}}\left(\lambda,\left|\alpha\right|\right)\right|^{2}\left\Vert \widehat{\sigma}\left(\lambda\right)\Phi_{\alpha}^{\lambda}\right\Vert _{L^{2}\left(\mathbb{R}^{n}\right)}^{2}\left|\lambda\right|^{n}d\lambda\\
= & \int_{-\infty}^{+\infty}\sum_{k=0}^{+\infty}\left|\widehat{\chi_{B_{\rho}}}\left(\lambda,k\right)\right|^{2}\sum_{\left|\alpha\right|=k}\left\Vert \widehat{\sigma}\left(\lambda\right)\Phi_{\alpha}^{\lambda}\right\Vert _{L^{2}\left(\mathbb{R}^{n}\right)}^{2}\left|\lambda\right|^{n}d\lambda.
\end{align*}
From now on, for every $k\in\mathbb{N}$, we set \[\nu=\nu_k=4k+2n.\] 
Let $\Lambda>0$ and let
\[
F_{\Lambda}=\left\{ \left(k,\lambda\right)\in\mathbb{N}\times\mathbb{R}^{*}:\left\langle \lambda\nu\right\rangle \leqslant\Lambda,\left|\lambda\right|\leqslant1\right\} .
\]
For every $0<s<1$ such that $s\Lambda>Q-1$, we
consider
\begin{align}
 & \int_{0}^{1}\int_{\mathbb{H}^{n}}\left|D_{N}\left(z,t;\rho\right)\right|^{2}dzdtd\rho\nonumber \\
\gtrsim & \int_{-\infty}^{+\infty}\sum_{k=0}^{+\infty}\chi_{F_{\Lambda}}\left(k,\lambda\right)\int_{0}^{1}\left|\widehat{\chi_{B_{\rho}}}\left(\lambda,k\right)\right|^{2}d\rho\sum_{\left|\alpha\right|=k}\left\Vert \widehat{\sigma}\left(\lambda\right)\Phi_{\alpha}^{\lambda}\right\Vert _{L^{2}\left(\mathbb{R}^{n}\right)}^{2}\left|\lambda\right|^{n}d\lambda\nonumber \\
\geqslant & \left(\inf_{\left(k,\lambda\right)\in F_{\Lambda}}e^{\frac{1}{2}\nu\left|\lambda\right|s}\int_{0}^{1}\left|\widehat{\chi_{B_{\rho}}}\left(\lambda,k\right)\right|^{2}d\rho\right)\nonumber \\
 & \times\sum_{k=0}^{+\infty}\int_{-\infty}^{+\infty}\chi_{F_{\Lambda}}\left(k,\lambda\right)e^{-\frac{1}{2}\nu\left|\lambda\right|s}\sum_{\left|\alpha\right|=k}\left\Vert \widehat{\sigma}\left(\lambda\right)\Phi_{\alpha}^{\lambda}\right\Vert _{L^{2}\left(\mathbb{R}^{n}\right)}^{2}\left|\lambda\right|^{n}d\lambda\nonumber \\
= & \mathcal{I}\left(\Lambda,s\right)\cdot\mathcal{J}\left(\Lambda,s\right).\label{eq:I x J}
\end{align}
Observe that, while the term $\mathcal{I}$ only depends on $\Lambda$
and $s$, the term $\mathcal{J}$ also depends on $\sigma$, that
is on the sequence of points and on the measure $\mu$. In Section
\ref{sec:Estimate I} and \ref{sec:Estimate J} we will prove that
\begin{equation}
\mathcal{I}\left(\Lambda,s\right)\gtrsim s^{\frac{Q-1}{2}}\label{eq: stima I}
\end{equation}
and 
\begin{equation}
\mathcal{J}\left(\Lambda,s\right)\gtrsim Ns^{1-\frac{Q}{2}}-C_{1}N^{2}-C_{2}N^{2}s^{-1}\Lambda^{\frac{Q-4}2}e^{-\frac{1}{2}s\Lambda}.\label{eq:stima J}
\end{equation}
Using these estimates we obtain
\begin{align*}
\int_{0}^{1}\int_{\mathbb{H}^{n}}\left|D_{N}\left(z,t;\rho\right)\right|^{2}dzdtd\rho\gtrsim & Ns^{1/2}\left(1-C_{1}Ns^{\frac{Q}{2}-1}-C_{2}N\left(s\Lambda\right)^{\frac{Q-4}{2}}e^{-\frac{1}{2}s\Lambda}\right).
\end{align*}
Setting $s=\left(2C_{1}N\right)^{-1/n}$ and $s\Lambda=3\log N$
yields
\begin{align*}
\int_{0}^{1}\int_{\mathbb{H}^{n}}\left|D_{N}\left(z,t;\rho\right)\right|^{2}dzdtd\rho\gtrsim & N^{1-\frac{1}{Q-2}}\left(\frac{1}{2}-C_{2}N^{-1/2}\left(3\log N\right)^{\frac{Q-4}{2}}\right)\gtrsim N^{1-\frac{1}{Q-2}},
\end{align*}
for $N$ large enough.
\end{proof}

\section{An upper bound for the minimal quadratic discrepancy}\label{sec:upper bound}
In this section we apply a general result for discrepancy contained in \cite[§8]{BCCGT2019} to our setting. Let $F\subseteq\mathbb{H}^n$ have Lebesgue measure $1$, and let $\mu$ be the probability measure obtained by restricting the Lebesgue measure to $F$.
Assume that $F$ is chosen so that $\mu$  is lower and upper Ahlfors regular, namely, that there exist positive constants $c_1,c_2$ such that for every $0<r<\textrm{diam}(F)$ and every ball $\mathfrak{B}_{r}$  centered in $F$ we have
\begin{align*}
c_1r^{Q}\leqslant\mu\left(\mathfrak{B}_{r}\right)\leqslant c_2r^{Q}.
\end{align*}
By a general result of Gigante and Leopardi \cite[Theorem 2]{GL2017}, if $N$ is a sufficiently large integer, then there exists a partition of $F=\bigcup\limits_{j=1,\ldots,N}F_j$ such that $\mu(F_j)=1/N$ and $\textrm{diam}(F_j)\approx N^{-1/Q}$.
We also need the following technical lemma.
\begin{lem}
Let $0<s<\rho$. Then
\[
\left\{ \left(z,t\right)\in B_{\rho}:\mathrm{dist}\left(\left(z,t\right),F\setminus B_{\rho}\right)\leqslant s\right\} \cup\left\{ \left(z,t\right)\in F\setminus B_{\rho}:\mathrm{dist}\left(\left(z,t\right),B_{\rho}\right)\leqslant s\right\} \subseteq B_{\rho+s}\setminus B_{\rho-s}.
\]
\end{lem}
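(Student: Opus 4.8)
The statement is a purely metric/set-theoretic claim about the sets $B_\rho$ with respect to the Korányi distance, so the plan is to verify the two inclusions separately by a triangle-inequality argument, using only the homogeneity of $\|\cdot\|$ under $D_\rho$ and the fact that $B_\rho = D_\rho B_1$ is a "box" $\{|z|\le\rho,\ |t|\le\rho^2\}$. First I would fix a point $(z,t)$ in the first set, so $(z,t)\in B_\rho$ and there is $(w,s)\in F\setminus B_\rho$ with $\mathrm{dist}((z,t),(w,s))\le s_0$ (I will write $s_0$ for the parameter to avoid clashing with the coordinate $s$). Since $(w,s)\notin B_\rho$, either $|w|>\rho$ or $|s|>\rho^2$, and in either case a short computation with the Korányi norm of $(z,t)^{-1}\circ(w,s)$ shows $\|(z,t)^{-1}\circ(w,s)\|\ge c\bigl(|w|-|z|\bigr)$-type bounds; more cleanly, I would argue by contradiction: if $(z,t)\notin B_{\rho+s_0}\setminus B_{\rho-s_0}$, then (since $(z,t)\in B_\rho\subseteq B_{\rho+s_0}$) we must have $(z,t)\in B_{\rho-s_0}$, i.e. $|z|\le\rho-s_0$ and $|t|\le(\rho-s_0)^2$, and I would show this forces every point within Korányi-distance $s_0$ of $(z,t)$ to lie in $B_\rho$, contradicting $(w,s)\notin B_\rho$.

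The crux is therefore the containment $B_{\rho-s_0}\oplus (\text{Korányi ball of radius }s_0)\subseteq B_\rho$, and symmetrically $B_\rho \oplus(\text{Korányi ball of radius }s_0)\subseteq B_{\rho+s_0}$; the second set in the union is handled by the same pair of inclusions with the roles reversed. To prove these, take $(z,t)$ with $\|(z,t)^{-1}\circ(w,s)\|\le s_0$. Writing $(z,t)^{-1}\circ(w,s)=(w-z,\, s-t-\tfrac12\mathrm{Im}(z\cdot\overline w))=:(\zeta,\theta)$, the hypothesis gives $|\zeta|\le s_0$ and $|\theta|\le s_0^2$. Then $|w|\le|z|+|\zeta|\le |z|+s_0$, which controls the first coordinate. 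For the second coordinate one has $s=t+\theta+\tfrac12\mathrm{Im}(z\cdot\overline w)$, and since $z\cdot\overline w = z\cdot\overline z + z\cdot\overline\zeta$ has imaginary part $\mathrm{Im}(z\cdot\overline\zeta)$ with $|\mathrm{Im}(z\cdot\overline\zeta)|\le |z|\,|\zeta|\le |z| s_0$, we get $|s|\le |t| + s_0^2 + \tfrac12|z|s_0$. Plugging in $(z,t)\in B_{\rho-s_0}$, i.e. $|z|\le\rho-s_0$ and $|t|\le(\rho-s_0)^2$, yields $|w|\le\rho$ and $|s|\le(\rho-s_0)^2 + s_0^2+\tfrac12(\rho-s_0)s_0 = \rho^2 - \tfrac{3}{2}\rho s_0 + \tfrac{5}{2}s_0^2 \le \rho^2$ for $0<s_0<\rho$ (since $-\tfrac32\rho s_0+\tfrac52 s_0^2 = s_0(\tfrac52 s_0 - \tfrac32\rho) \le s_0(\tfrac52\rho-\tfrac32\rho)=s_0\rho>0$ — here I would be slightly more careful and either tighten the Korányi-to-coordinate conversion or simply absorb the discrepancy, since the lemma only needs an inclusion into $B_{\rho+s}$ with a fixed constant, and one could even state it with $B_{\rho+Cs}\setminus B_{\rho-Cs}$ if the sharp form turns out to need a harmless dimensional constant). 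The outer inclusion $B_\rho\oplus(\text{ball }s_0)\subseteq B_{\rho+s_0}$ is the same computation with $|z|\le\rho$, $|t|\le\rho^2$.

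The main obstacle I anticipate is the bookkeeping in the $t$-coordinate: the group law introduces the cross-term $\tfrac12\mathrm{Im}(z\cdot\overline w)$, and matching the quadratic growth $\rho^2$ of the $t$-slice against the distance $s_0$ (which enters $\theta$ at scale $s_0^2$ but enters the cross-term at scale $|z|s_0\sim\rho s_0$) requires either the restriction $s_0<\rho$ exactly as in the hypothesis, or a cosmetic widening of the annulus by a constant factor. I would first attempt the clean version with the hypothesis $0<s_0<\rho$ as stated; if the constants do not close exactly I would note that replacing $B_{\rho\pm s}$ by $B_{\rho\pm Cs}$ on the right-hand side is sufficient for the application in Section~\ref{sec:upper bound}, where only the order of magnitude of $|B_{\rho+s}\setminus B_{\rho-s}|$ matters. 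Everything else is a direct triangle inequality, so the proof should be short.
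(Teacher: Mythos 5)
Your approach is essentially the paper's: both reduce the lemma to the single containment ``$(w,\tau)\in B_a$ and $\mathrm{dist}((z,t),(w,\tau))\leqslant s$ imply $(z,t)\in B_{a+s}$'', verified coordinatewise via the group law, and then apply it with $a=\rho-s$ (inner inclusion, by contradiction) and $a=\rho$ (outer inclusion). The only flaw is an arithmetic slip in your inner computation: $(\rho-s_0)^2+s_0^2+\tfrac12(\rho-s_0)s_0$ equals $\rho^2-\tfrac32\rho s_0+\tfrac32 s_0^2=\rho^2-\tfrac32 s_0(\rho-s_0)\leqslant\rho^2$, not $\rho^2-\tfrac32\rho s_0+\tfrac52 s_0^2$; with the correct coefficient the constants close exactly for all $0<s_0<\rho$ (this is just the paper's inequality $a^2+s^2+\tfrac12 as\leqslant(a+s)^2$ with $a=\rho-s_0$), so the hedge about widening the annulus to $B_{\rho\pm Cs}$ is unnecessary.
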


\begin{proof}
We start by noting that if $a>0$ and $\left(w,\tau\right)\in B_{a}$
then $\overline{\mathfrak{B}_{s}\left(w,\tau\right)}\subseteq B_{a+s}$. Indeed,
let $\left(z,t\right)\in\overline{\mathfrak{B}_{s}\left(w,\tau\right)}$. Hence
$\left(z,t\right)=\left(w,\tau\right)\circ\left(y,\sigma\right)$
for some $\left(y,\sigma\right)$ such that $\left\Vert \left(y,\sigma\right)\right\Vert\leqslant s$.
In particular $\left|y\right| \leqslant s$ and $\left|\sigma\right| \leqslant s^{2}$.
Therefore
\[
\left|z\right|\leqslant\left|w\right|+\left|y\right|\leqslant a+s
\]
and
\begin{align*}
\left|t\right|\leqslant & \left|\tau\right|+\left|\sigma\right|+\frac{1}{2}\left|w\right|\left|y\right|\leqslant a^{2}+s^{2}+\frac{1}{2}as\leqslant\left(a+s\right)^{2}.
\end{align*}
It follows that $\left(z,t\right)\in B_{a+s}$. 
We now show that
\begin{align}
\left\{ \left(z,t\right)\in B_{\rho}:\mathrm{dist}\left(\left(z,t\right),F\setminus B_{\rho}\right)\leqslant s\right\}  
\subseteq B_{\rho}\setminus B_{\rho-s}.
\label{eq:prima_inclusione}
\end{align}
Assume, by contradiction, that there exists $\left(w,\tau\right)\in B_{\rho-s}$
such that $\text{dist}\left(\left(w,\tau\right),F\setminus B_{\rho}\right)\leqslant s$.
Hence $\left(F\setminus B_{\rho}\right)\cap\overline{\mathfrak{B}_{s}\left(w,\tau\right)}\neq\emptyset$.
This is not possible since $\overline{\mathfrak{B}_{s}\left(w,\tau\right)}\subseteq B_{\rho}$ and \eqref{eq:prima_inclusione} follows.
 On the other hand, if $\left(w,\tau\right)\in B_{\rho}$ we have $\overline{\mathfrak{B}_{s}\left(w,\tau\right)}\subseteq B_{\rho+s}$.
 Therefore,
\begin{align*}
\left\{ \left(z,t\right)\in F\setminus B_{\rho}:\text{dist}\left(x,B_{\rho}\right)\leqslant s\right\} \subseteq & \bigcup_{\left(w,\tau\right)\in B_{\rho}}\overline{\mathfrak{B}_{s}\left(w,\tau\right)}\setminus B_{\rho}\subseteq B_{\rho+s}\setminus B_{\rho}.
\end{align*}
\end{proof}

We now estimate the discrepancy with respect to the family of sets $B_\rho(z,t)=(z,t)\circ B_\rho$ with  $\rho<\textrm{diam}(F)$ and $(z,t)\in F$. 
Using the previous lemma, the translation invariance of the Lebesgue measure and the translation invariance of the distance we obtain
\begin{align*}
 \left| \left\{ x\in B_\rho(z,t) :\text {dist}\left\{ x,F\setminus B_\rho(z,t)\right\} \leqslant s\right\} \right|
+ \left| \left\{ x\in F \setminus B_\rho(z,t):\text {dist}\left\{ x,B_\rho(z,t)\right\} \leqslant s\right\} \right| 
\leqslant & |B_{\rho+s}|-|B_{\rho-s}| \\
\leqslant & c\,s
\end{align*}
with a constant $c$ independent of $\rho$ as long as $\rho<\textrm{diam}(F)$.
By Corollary 8.2 in \cite{BCCGT2019}, for $N$ sufficiently large there exists a finite sequence of points such that
\begin{align*}
    \int_0^{\textrm{diam}(F)} \int_F \left|D_{N}\left(z,t;\rho\right)\right|^{2}dzdt\,d\rho
    \leqslant CN^{1-\frac{1}{Q}}.
\end{align*}

\section{Estimates for Laguerre functions}\label{s:estimates_laguerre}

This section collects some technical ingredients needed for the analysis
of the asymptotic behavior of $\widehat{\chi_{B}}$. In particular,
we rely on classical estimates for Laguerre polynomials, and consequently
for Laguerre functions (see \cite{AskeyWainger1965}, \cite{Erdelyi1960}
and \cite{FrenzenWong1988}).

\subsection{The Bessel and the Airy functions}

In the following, $J_{m}$ and $Y_{m}$ denote the usual Bessel functions
of the first and the second kind. We shall use the standard asymptotics
\begin{align}
J_{m}\left(z\right) & \sim\frac{z^{m}}{2^{m}m!}, & \text{for } & z\to0^{+},\label{eq: bessel picc}\\
J_{m}\left(z\right) & =\sqrt{\frac{2}{\pi z}}\cos\left(z-\frac{m\pi}{2}-\frac{\pi}{4}\right)+O\left(\frac{1}{z^{3/2}}\right), & \text{for } & z\to+\infty.\label{eq:bessel gra}
\end{align}
In particular (\ref{eq: bessel picc}) follows from the Taylor expansion
of $J_{m}\left(z\right)$ and the location of its first zero. 

Furthermore, we define
\begin{equation}
\widetilde{J}_{m}\left(z\right)=\begin{cases}
J_{m}\left(z\right) & \text{for }z\text{ small,}\\
\left(\left|J_{m}\left(z\right)\right|^{2}+\left|Y_{m}\left(z\right)\right|^{2}\right)^{1/2} & \text{for }z\text{ large.}
\end{cases}\label{eq: def J tilde}
\end{equation}
Observe that 
\begin{equation}
\widetilde{J}_{m}\left(z\right)\approx\frac{z^{m}}{\left\langle z\right\rangle ^{m+1/2}}.\label{eq:J_tilde asympt}
\end{equation}
Indeed, the case $z$ small is just (\ref{eq: bessel picc}) while
the case $z$ large follows from the fact that $\widetilde{J}_{m}\left(z\right)$
equals the modulus of the Hankel function $H_{m}^{\left(1\right)}$.

We denote by $\text{Ai}$ and ${\rm Bi}$ the Airy function of the
first and the second kind. The estimate of $\text{Ai}$ that we shall
use are
\begin{align}
\mathrm{Ai}\left(u\right) & =\frac{1}{\pi^{1/2}\left(-u\right)^{1/4}}\cos\left(\frac{2}{3}\left(-u\right)^{3/2}-\frac{\pi}{4}\right)+O\left(\left(-u\right)^{-7/4}\right), & \text{for } & u\to-\infty,\nonumber \\
\mathrm{Ai}\left(u\right) & =O\left(1\right), & \text{for } & |u|\to0,\label{eq:Stime Airy}\\
\mathrm{Ai}\left(u\right) & =\frac{1}{2\sqrt{\pi}u^{1/4}}e^{-\frac{1}{2}u^{3/2}}\left(1+O\left(u^{-3/2}\right)\right) & \text{for } & u\to+\infty\nonumber \\
\mathrm{Ai}'\left(u\right) & =-\frac{u^{1/4}}{2\sqrt{\pi}}e^{-\frac{2}{3}u^{3/2}}\left(1+O\left(u^{-3/2}\right)\right) & \text{for } & u\to+\infty.\nonumber 
\end{align}
We also introduce the auxiliary function
\[
\widetilde{\mathrm{Ai}}\left(z\right)=\begin{cases}
\left(\left|\textrm{Ai}\left(z\right)\right|^{2}+\left|\textrm{Bi}\left(z\right)\right|^{2}\right)^{1/2} & \text{for }z<0,\\
\mathrm{Ai}\left(z\right) & \text{for }z\geqslant0.
\end{cases}
\]
From (\ref{eq:Stime Airy}) and similar asymptotics for ${\rm Bi}$,
we have
\[
\widetilde{{\rm Ai}}\left(u\right)=\begin{cases}
O\left(\left(-u\right)^{-1/4}\right), & \text{for }u\to-\infty,\\
O\left(1\right), & \text{for }|u|\to0.
\end{cases}
\]
For later use, we introduce the functions
\[
\mathrm{IAi}\left(u\right)=\int_{-\infty}^{u}\mathrm{Ai}\left(\tau\right)d\tau,\qquad\mathrm{IIAi}\left(u\right)=\int_{-\infty}^{u}\mathrm{IAi}\left(\tau\right)d\tau,\qquad u\in\mathbb{R},
\]
and we collect some of their estimates of the next Lemma.
\begin{lem}
\label{lem: stime IA}The following estimates hold
\begin{align*}
\mathrm{IAi}\left(u\right) & =\begin{cases}
\frac{1}{\pi^{1/2}\left(-u\right)^{3/4}}\cos\left(\frac{2}{3}\left(-u\right)^{3/2}+\frac{\pi}{4}\right)+O\left(\left(-u\right)^{-9/4}\right), & \text{for }u\to-\infty,\\
\frac{2}{3}\left(1+O\left(u\right)\right), & \text{for }|u|\to0.
\end{cases}\\
\mathrm{IIAi}\left(u\right) & =\begin{cases}
O\left(\left(-u\right)^{-5/4}\right), & \text{for }u\to-\infty,\\
O\left(1\right), & \text{for }|u|\to0.
\end{cases}
\end{align*}
\end{lem}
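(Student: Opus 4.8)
The plan is to treat the two regimes separately. Near the origin both $\mathrm{IAi}$ and $\mathrm{IIAi}$ are $C^{1}$ (indeed smooth, being iterated antiderivatives of the entire function $\mathrm{Ai}$), so the estimates for $|u|\to0$ reduce to a first-order Taylor expansion once the constants $\mathrm{IAi}(0)$ and $\mathrm{IIAi}(0)$ are known to be finite. For $\mathrm{IAi}(0)$ I would invoke the classical identities $\int_{-\infty}^{\infty}\mathrm{Ai}(t)\,dt=1$ and $\int_{0}^{\infty}\mathrm{Ai}(t)\,dt=\tfrac13$, which give $\mathrm{IAi}(0)=\int_{-\infty}^{0}\mathrm{Ai}(t)\,dt=\tfrac23$; together with $\mathrm{IAi}'(0)=\mathrm{Ai}(0)$ finite this yields $\mathrm{IAi}(u)=\tfrac23+\mathrm{Ai}(0)u+O(u^{2})=\tfrac23\left(1+O(u)\right)$. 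For $\mathrm{IIAi}(0)$ one first needs the convergence of $\int_{-\infty}^{0}\mathrm{IAi}(\tau)\,d\tau$, which is part of the analysis at $-\infty$ below; granting that, $\mathrm{IIAi}$ is $C^{1}$ near $0$ with $\mathrm{IIAi}'(0)=\mathrm{IAi}(0)=\tfrac23$, so $\mathrm{IIAi}(u)=\mathrm{IIAi}(0)+\tfrac23u+O(u^{2})=O(1)$.

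The heart of the matter is the behavior as $u\to-\infty$, where $\mathrm{Ai}$ oscillates with amplitude $(-u)^{-1/4}$, so $\mathrm{IAi}(\tau)$ decays only like $(-\tau)^{-3/4}$ and the tail integrals converge merely conditionally; a bare substitution of the asymptotic of $\mathrm{Ai}$ is delicate near $\tau=0$. The device I would use is the Airy equation $\mathrm{Ai}''(u)=u\,\mathrm{Ai}(u)$, which for $u<0$ gives the exact identity
\[
\frac{d}{du}\!\left(\frac{\mathrm{Ai}'(u)}{u}\right)=\mathrm{Ai}(u)-\frac{\mathrm{Ai}'(u)}{u^{2}}.
\]
Integrating from $-\infty$ — using $\mathrm{IAi}(u)\to0$, $\mathrm{Ai}'(u)/u\to0$, and $\mathrm{Ai}'(\tau)/\tau^{2}=O((-\tau)^{-7/4})\in L^{1}$ near $-\infty$ — yields
\[
\mathrm{IAi}(u)=\frac{\mathrm{Ai}'(u)}{u}+\int_{-\infty}^{u}\frac{\mathrm{Ai}'(\tau)}{\tau^{2}}\,d\tau .
\]
A further integration by parts turns the remaining integral into
\[
\frac{\mathrm{Ai}(u)}{u^{2}}+2\int_{-\infty}^{u}\frac{\mathrm{Ai}(\tau)}{\tau^{3}}\,d\tau,
\]
and the bound $\mathrm{Ai}(u)=O((-u)^{-1/4})$ shows both pieces are $O((-u)^{-9/4})$. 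It then remains to expand $\mathrm{Ai}'(u)/u$: differentiating the asymptotic expansion of $\mathrm{Ai}$ (legitimate for these functions) gives $\mathrm{Ai}'(u)=\pi^{-1/2}(-u)^{1/4}\sin\!\big(\tfrac23(-u)^{3/2}-\tfrac\pi4\big)+O((-u)^{-5/4})$ as $u\to-\infty$, so, using $-\sin(\theta-\tfrac\pi4)=\cos(\theta+\tfrac\pi4)$,
\[
\frac{\mathrm{Ai}'(u)}{u}=\frac{1}{\pi^{1/2}(-u)^{3/4}}\cos\!\Big(\tfrac23(-u)^{3/2}+\tfrac\pi4\Big)+O((-u)^{-9/4}),
\]
which is exactly the claimed expansion of $\mathrm{IAi}$.

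For $\mathrm{IIAi}$ as $u\to-\infty$ I would integrate the identity above once more: the two explicit error terms just obtained for $\mathrm{IAi}-\mathrm{Ai}'(u)/u$ integrate to $O((-u)^{-5/4})$, while for the main term one integrates by parts again,
\[
\int_{-\infty}^{u}\frac{\mathrm{Ai}'(\tau)}{\tau}\,d\tau=\frac{\mathrm{Ai}(u)}{u}+\int_{-\infty}^{u}\frac{\mathrm{Ai}(\tau)}{\tau^{2}}\,d\tau,
\]
and both terms on the right are $O((-u)^{-5/4})$ since $\mathrm{Ai}(u)/u=O((-u)^{-5/4})$ and $\mathrm{Ai}(\tau)/\tau^{2}=O((-\tau)^{-9/4})\in L^{1}$. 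This gives $\mathrm{IIAi}(u)=O((-u)^{-5/4})$; in particular $\mathrm{IIAi}(u)\to0$ as $u\to-\infty$, which a posteriori justifies the convergence of $\int_{-\infty}^{0}\mathrm{IAi}$ used in the first paragraph and legitimizes the definition of $\mathrm{IIAi}$.

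The main obstacle is exactly the conditional (not absolute) convergence of the oscillatory tails: the repeated use of the Airy ODE to rewrite $\int_{-\infty}^{u}$ as a boundary term plus an absolutely convergent integral is what makes every manipulation rigorous and, at the same time, produces the precise decay exponents $-9/4$ and $-5/4$. A secondary point requiring care is that we need the oscillatory asymptotics of $\mathrm{Ai}'$ at $-\infty$ (not listed in \eqref{eq:Stime Airy}), obtained by term-by-term differentiation of the expansion of $\mathrm{Ai}$, together with the exact value $\mathrm{IAi}(0)=\tfrac23$.
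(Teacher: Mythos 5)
Your argument is correct, and it is genuinely more self-contained than the paper's proof. The paper simply cites the DLMF (9.7.9, 9.10.6, 9.10.11) for the asymptotics of $\mathrm{IAi}$, obtains $\mathrm{IIAi}$ by integrating those estimates by parts, and reads off $\mathrm{IIAi}(0)$ from 9.10.22. You instead derive the $\mathrm{IAi}$ expansion from scratch via the Airy equation $\mathrm{Ai}''(u)=u\,\mathrm{Ai}(u)$, which converts the conditionally convergent tail $\int_{-\infty}^{u}\mathrm{Ai}$ into the boundary term $\mathrm{Ai}'(u)/u$ plus absolutely convergent remainders; this is in fact the mechanism underlying the cited formulas, and it has the merit of making the decay exponents $-9/4$ and $-5/4$ transparent. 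All the individual steps check out: the identity $\tfrac{d}{du}\bigl(\mathrm{Ai}'(u)/u\bigr)=\mathrm{Ai}(u)-\mathrm{Ai}'(u)/u^{2}$, the decay of the boundary terms, the differentiated asymptotics $\mathrm{Ai}'(u)=\pi^{-1/2}(-u)^{1/4}\sin\bigl(\tfrac23(-u)^{3/2}-\tfrac\pi4\bigr)+O((-u)^{-5/4})$, the phase identity $-\sin(\theta-\tfrac\pi4)=\cos(\theta+\tfrac\pi4)$, the values $\mathrm{IAi}(0)=\tfrac23$ and $\mathrm{IIAi}'(0)=\tfrac23$, and the second integration by parts for $\mathrm{IIAi}$. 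One presentational point: when you "integrate from $-\infty$" you invoke $\mathrm{IAi}(u)\to0$, which is really a consequence rather than an input; the clean formulation is to integrate over $[-R,u]$ and let $R\to\infty$, whereupon the convergence of $\int_{-R}^{u}\mathrm{Ai}$ (hence the well-definedness of $\mathrm{IAi}$, and later of $\mathrm{IIAi}$) falls out of the convergence of the other two terms. This is a matter of ordering the logic, not a gap.
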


\begin{proof}
The estimates for $\mathrm{IAi}$ are well known, see e.g. \cite[9.7.9, 9.10.6 and 9.10.11]{NIST}.
The estimates for $\mathrm{IIAi}$ follows integrating by parts the
estimates for ${\rm IAi}$. Note that $\mathrm{IIAi}\left(0\right)$
can be computed using \cite[9.10.22]{NIST}.
\end{proof}
The following lemma will be useful in the next section.
\begin{lem}
\label{lem:Bessel^2}The estimates from below on the following integrals
hold.
\begin{enumerate}
\item There exists $c>0$ such that if $1<a<b-1$, then
\[
\int_{a}^{b}\left|x^{1/2}J_{m}\left(x\right)\right|^{2}dx>c\left(b-a\right).
\]
\item There exist $\tilde{c},c>0$ such that whenever $a+1<b<-\tilde{c}$,
we have
\[
\int_{a}^{b}\left|\mathrm{IAi}\left(x\right)\right|^{2}dx>\frac{c}{\sqrt{-b}}.
\]
\end{enumerate}
\end{lem}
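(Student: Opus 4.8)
The plan is to deduce both estimates from the classical asymptotics recalled above, reducing each to a lower bound for an oscillatory integral of the shape $\int_{a}^{b}w(x)\cos^{2}\big(\phi(x)\big)\,dx$ with $w>0$ slowly varying and $\phi'$ monotone and bounded away from zero on $[a,b]$. The device I would use throughout is the identity $\cos^{2}\phi=\tfrac12+\tfrac12\cos(2\phi)$: the constant part contributes the main term $\tfrac12\int_{a}^{b}w$, while the oscillatory part $\tfrac12\int_{a}^{b}w\cos(2\phi)$ is handled by a single integration by parts (van der Corput's principle), which trades one factor of $1/|\phi'|$ for a derivative and is therefore of lower order.

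For part~(1) I would fix a large threshold $x_{0}$ and split according to $a\geqslant x_{0}$ or $1<a<x_{0}$. When $a\geqslant x_{0}$, \eqref{eq:bessel gra} gives $x^{1/2}J_{m}(x)=\sqrt{2/\pi}\,\cos(x-\theta_{m})+O(x^{-1})$ with $\theta_{m}=\tfrac{m\pi}{2}+\tfrac{\pi}{4}$, whence $|x^{1/2}J_{m}(x)|^{2}=\tfrac2\pi\cos^{2}(x-\theta_{m})+O(x^{-1})$ (cross term $O(x^{-1})$, squared error $O(x^{-2})$). Integrating, I would combine the elementary lower bound $\int_{a}^{b}\cos^{2}(x-\theta_{m})\,dx\geqslant c_{0}(b-a)$ valid for every $b-a\geqslant1$ (on an interval longer than the half-period of $\cos^{2}$ its average is bounded below by a universal constant) with $\int_{a}^{b}x^{-1}\,dx=\log(b/a)\leqslant (b-a)/a\leqslant (b-a)/x_{0}$; choosing $x_{0}$ large absorbs the error term and yields $\int_{a}^{b}|x^{1/2}J_{m}(x)|^{2}\,dx\gtrsim b-a$. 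For $1<a<x_{0}$ I would argue by compactness: if $b\leqslant 2x_{0}$, the functional $(a,b)\mapsto\int_{a}^{b}|x^{1/2}J_{m}|^{2}$ is continuous and strictly positive on the compact region $\{1\leqslant a\leqslant b-1\leqslant 2x_{0}-1\}$ (strictly positive because $x^{1/2}J_{m}$ has only isolated zeros, hence cannot vanish a.e.\ on an interval of positive length), while if $b>2x_{0}$ then $[x_{0},b]\subseteq[a,b]$ with $b-a\leqslant 2(b-x_{0})$, and the previous estimate applied on $[x_{0},b]$ closes the case.

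For part~(2) I would start from Lemma~\ref{lem: stime IA}: for $x\leqslant-\tilde{c}$ with $\tilde{c}$ large, putting $\phi(x)=\tfrac23(-x)^{3/2}+\tfrac\pi4$ one gets
\[
\left|\mathrm{IAi}(x)\right|^{2}=\frac{\cos^{2}\big(\phi(x)\big)}{\pi(-x)^{3/2}}+O\big((-x)^{-3}\big),
\]
since the cross term in squaring is $O\big((-x)^{-3/4-9/4}\big)=O\big((-x)^{-3}\big)$, and this error integrates to $O\big((-b)^{-2}\big)$. Splitting $\cos^{2}\phi=\tfrac12+\tfrac12\cos(2\phi)$, the constant part contributes $\tfrac1{2\pi}\int_{a}^{b}(-x)^{-3/2}\,dx=\tfrac1\pi\big((-b)^{-1/2}-(-a)^{-1/2}\big)$, which the hypotheses relating $a$ and $b$ make comparable to $(-b)^{-1/2}$. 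For the oscillatory part I would use that $\phi'(x)=-(-x)^{1/2}$ is monotone with $|\phi'(x)|\geqslant(-b)^{1/2}$ on $[a,b]$, and that the factor $\big((-x)^{3/2}\phi'(x)\big)^{-1}=-(-x)^{-2}$ is monotone there; one integration by parts then gives
\[
\left|\int_{a}^{b}\frac{\cos\big(2\phi(x)\big)}{(-x)^{3/2}}\,dx\right|\lesssim\frac{1}{(-b)^{2}}+\int_{a}^{b}\frac{dx}{(-x)^{3}}\lesssim\frac{1}{(-b)^{2}}.
\]
Collecting the three contributions yields $\int_{a}^{b}|\mathrm{IAi}(x)|^{2}\,dx\geqslant c\,(-b)^{-1/2}-C\,(-b)^{-2}\geqslant\tfrac{c}{2}\,(-b)^{-1/2}$ as soon as $\tilde{c}$ is large enough.

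I expect the oscillatory remainder in part~(2) to be the main obstacle: one has to check that $(-x)^{3/2}\phi'(x)$ is a clean negative power of $-x$, so that the integration by parts produces exactly the saving $(-b)^{-2}$ beating the main term $(-b)^{-1/2}$, and that the bound is uniform in the endpoints (which is ensured by the bounded variation of that factor on $[a,b]$). In part~(1) the only delicate point is matching the large-argument asymptotics with the compact region $1<a<x_{0}$, which the compactness/continuity argument handles; the rest is bookkeeping.
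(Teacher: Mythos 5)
Part~(1) of your argument is correct and matches the paper's in substance: beyond a large threshold $x_0$ the asymptotics \eqref{eq:bessel gra} reduce the claim to $\int_a^b\cos^2(x-\theta_m)\,dx\gtrsim b-a$ plus an error that is $O((b-a)/x_0)$, and the bounded range is handled by strict positivity on a compact set. (The paper instead excises small neighborhoods of the finitely many zeros of $J_m$ on $[1,C]$; your continuity-plus-compactness version is an equivalent, slightly cleaner route to the same uniform constant.) In part~(2), your treatment of the oscillatory piece by one integration by parts is sound and in fact yields a cleaner quantitative remainder, $O((-b)^{-2})$, than the paper's argument, which removes the set where $\cos^2$ is small and only loses an $\varepsilon$-fraction of the main term.

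The gap is the one sentence in part~(2) you did not justify: that ``the hypotheses relating $a$ and $b$'' make $(-b)^{-1/2}-(-a)^{-1/2}$ comparable to $(-b)^{-1/2}$. The stated hypothesis is only $a+1<b<-\tilde c$, i.e.\ $b-a>1$, which allows $b-a$ to remain bounded while $-b\to\infty$. Take $a=b-2$ with $B=-b$ large: then $(-b)^{-1/2}-(-a)^{-1/2}=B^{-1/2}-(B+2)^{-1/2}\approx B^{-3/2}$, and indeed $\int_{b-2}^{b}|\mathrm{IAi}(x)|^2\,dx\lesssim B^{-3/2}\ll B^{-1/2}$, so both your comparison and the inequality of the lemma as literally stated fail in this regime. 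What your computation actually establishes is $\int_a^b|\mathrm{IAi}(x)|^2\,dx\gtrsim (-b)^{-1/2}-(-a)^{-1/2}-C(-b)^{-2}$; upgrading this to $c(-b)^{-1/2}$ requires an extra hypothesis of the form $-a\geqslant(1+\delta)(-b)$ for some fixed $\delta>0$. You should know that the paper's own proof makes exactly the same jump in its final display (the claim $\int_{\frac23(-b)^{3/2}}^{\frac23(-a)^{3/2}}u^{-4/3}\,du\gtrsim(-b)^{-1/2}$, whose left-hand side is again a constant times $(-b)^{-1/2}-(-a)^{-1/2}$), and that the lemma is only ever applied with $-a\geqslant 2(-b)$, where the missing condition holds. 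So your proposal reproduces the paper's argument, gap included; adding the hypothesis $-a\geqslant(1+\delta)(-b)$, or restating the conclusion as $\gtrsim(-b)^{-1/2}-(-a)^{-1/2}$, would make your proof complete.
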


\begin{proof}
If $a$ is sufficiently large, say $a\geqslant C$, by (\ref{eq:bessel gra}),
it is easy to check that
\[
\int_{a}^{b}\left|x^{1/2}J_{m}\left(x\right)\right|^{2}dx\gtrsim\int_{a}^{b}\left|\cos\left(x-\frac{\pi m}{2}-\frac{\pi}{4}\right)\right|^{2}dx+O\left(\frac{1}{a}\right)\gtrsim b-a.
\]
Let now $b<C$ and let $\mathcal{Z}=\left\{ z\in\left[1,C\right]\colon J_{m}\left(z\right)=0\right\} $
and let $W_{\varepsilon}=\left\{ x\in\left[1,C\right]\colon d\left(x,\mathcal{Z}\right)\geqslant\varepsilon\right\} .$
Since $\mathcal{Z}$ is finite one can easily find a positive $\varepsilon$,
independent of $a$ and $b$ such that 
\[
\left|W_{\varepsilon}\cap\left[a,b\right]\right|\geqslant\frac{b-a}{2}.
\]
Let $c={\displaystyle \min_{x\in W_{\varepsilon}}}\left|x^{1/2}J_{m}\left(x\right)\right|$
then 
\[
\int_{a}^{b}\left|x^{1/2}J_{m}\left(x\right)\right|^{2}dx=\int_{\left[a,b\right]\cap W_{\varepsilon}}\left|x^{1/2}J_{m}\left(x\right)\right|^{2}dx\gtrsim c^{2}\left|\left[a,b\right]\cap W_{\varepsilon}\right|\geqslant\frac{c^{2}}{2}\left(b-a\right).
\]
The case $a<C<b$ follows combining the previous two cases.

For $\tilde{c}>0$ large enough, by Lemma \ref{lem: stime IA}, we
have 
\begin{align*}
\int_{a}^{b}\left|\mathrm{IAi}\left(x\right)\right|^{2}dx\gtrsim & \int_{\left[a,b\right]}\left(-x\right)^{-3/2}\left|\cos\left(\frac{2}{3}\left(-x\right)^{3/2}+\frac{\pi}{4}\right)\right|^{2}dx+O\left(\int_{a}^{b}\left(-x\right)^{-9/2}dx\right)\\
\gtrsim & \int_{\frac{2}{3}\left(-b\right)^{3/2}}^{\frac{2}{3}\left(-a\right)^{3/2}}u^{-4/3}\left|\cos\left(u+\frac{\pi}{4}\right)\right|^{2}du+O\left(\left(-b\right)^{-7/2}\right)
\end{align*}
Let $\varepsilon$ be a positive small number. Let
\[
V_{\varepsilon}=\left\{ u\in\mathbb{R}\colon d\left(u,\frac{\pi}{4}+\mathbb{Z}\pi\right)>\varepsilon\right\} ,
\]
and observe that that there exists $c_{\varepsilon}>0$ such that
$\left|\cos\left(u+\frac{\pi}{4}\right)\right|^{2}>c_{\varepsilon}$,
for every $u\in V_{\varepsilon}$. Hence
\begin{align*}
\int_{a}^{b}\left|\mathrm{IAi}\left(x\right)\right|^{2}dx & \gtrsim\int_{\frac{2}{3}\left[\left(-b\right)^{3/2},\left(-a\right)^{3/2}\right]\cap V_{\varepsilon}}u^{-4/3}du+O\left(\left(-b\right)^{-7/2}\right)\\
 & \gtrsim\int_{\frac{2}{3}\left(-b\right)^{3/2}}^{\frac{2}{3}\left(-a\right)^{3/2}}u^{-4/3}du+O\left(\left(-b\right)^{-7/2}\right)\\
 & \gtrsim\frac{1}{\sqrt{-b}}+O\left(\frac{1}{\left(-b\right)^{7/2}}\right)\gtrsim\frac{1}{\sqrt{-b}}.
\end{align*}
\end{proof}

\subsection{The functions $A$ and $\Theta$}

Following \cite{Erdelyi1960,FrenzenWong1988}, we introduce two auxiliary
functions that will play a key role in the estimates for Laguerre
functions, and we recall their main properties. For $0\leqslant t<1$,
we set
\begin{align*}
A\left(t\right) & =\frac{1}{2}\left(\arcsin\sqrt{t}+\sqrt{t-t^{2}}\right)
\end{align*}
and, for $t\geqslant0$,
\[
\Theta\left(t\right)=\begin{cases}
-{\displaystyle \left(\frac{3}{4}\left(\arccos\sqrt{t}-\sqrt{t-t^{2}}\right)\right)^{2/3},} & \text{if }0\leqslant t<1,\\
{\displaystyle \left(\frac{3}{4}\left(\sqrt{t^{2}-t}-\mathrm{arccosh}\sqrt{t}\right)\right)^{2/3},} & \text{if }t\geqslant1.
\end{cases}
\]
These functions play a central role in the asymptotic expansion of
Laguerre functions. We now gather their key properties in the next
lemma.
\begin{lem}
\label{lem: A beta alpha}The function $A$ is smooth in $\left(0,1\right)$
and has the following properties:
\begin{align}
A'\left(t\right) & =\frac{1}{2}\left(\frac{1-t}{t}\right)^{1/2}\label{eq: A'}\\
\frac{\pi}{4}\sqrt{t} & \leqslant A\left(t\right)\leqslant\sqrt{t}\label{eq:Bound A(t)}\\
A\left(t\right) & =t^{1/2}\left(1+O\left(t\right)\right),\qquad t\to0^{+}.\label{eq:asympt A}
\end{align}
The function $\Theta$ is smooth in $\left(0,\infty\right)$, $\Theta'\left(t\right)>0$,
$\Theta\left(1\right)=0$, and
\begin{align}
\Theta\left(t\right) & =2^{-2/3}\left(t-1\right)\left(1+O\left(t-1\right)\right), & t\to1.\label{eq:That t->1}
\end{align}
For $\varepsilon>0$, there exists $C_{\varepsilon}>0$ such that
for every $t\geqslant1+\varepsilon$, we get
\begin{equation}
C_{\varepsilon}t^{2/3}\leqslant\Theta\left(t\right)\leqslant\left(\frac{3}{4}t\right)^{2/3}.\label{eq: theta grande}
\end{equation}
\end{lem}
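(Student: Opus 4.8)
The statement is a list of elementary calculus facts about two explicitly given functions, so the plan is to verify them by direct computation; the only point that needs a genuine idea is the regularity of $\Theta$ at the turning point $t=1$.

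\textbf{The function $A$.} Since $A$ is a fixed linear combination of $\arcsin\sqrt t$ and $\sqrt{t-t^{2}}$, it is smooth on $(0,1)$. Differentiating and using $\sqrt{t-t^{2}}=\sqrt t\,\sqrt{1-t}$ one gets
\[
A'(t)=\frac14\left(\frac{1}{\sqrt t\sqrt{1-t}}+\frac{1-2t}{\sqrt t\sqrt{1-t}}\right)=\frac14\cdot\frac{2-2t}{\sqrt t\sqrt{1-t}}=\frac12\left(\frac{1-t}{t}\right)^{1/2},
\]
which is \eqref{eq: A'}. As $A(0^{+})=0$ we have $A(t)=\int_{0}^{t}A'(s)\,ds$, and $A'(s)\le\frac12 s^{-1/2}$ gives $A(t)\le\sqrt t$. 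For the lower bound I would study $g(t):=A(t)\,t^{-1/2}$: a short computation gives $t^{3/2}g'(t)=t\,A'(t)-\tfrac12 A(t)=\tfrac14\bigl(\sqrt{t-t^{2}}-\arcsin\sqrt t\bigr)$, and with $\theta=\arcsin\sqrt t$ one has $\sqrt{t-t^{2}}=\sin\theta\cos\theta=\tfrac12\sin2\theta\le\theta=\arcsin\sqrt t$, so $g'\le0$; hence $g(t)\ge g(1)=A(1)=\tfrac\pi4$ for $t\in(0,1)$, which together with $g(t)\le1$ is \eqref{eq:Bound A(t)}. Finally \eqref{eq:asympt A} follows from $\arcsin\sqrt t=\sqrt t+\tfrac16 t^{3/2}+O(t^{5/2})$ and $\sqrt{t-t^{2}}=\sqrt t\,(1-\tfrac12 t+O(t^{2}))$, so that $A(t)=\sqrt t-\tfrac16 t^{3/2}+O(t^{5/2})=\sqrt t\,(1+O(t))$.

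\textbf{The function $\Theta$.} Write $\phi(t)=\arccos\sqrt t-\sqrt{t-t^{2}}$ on $(0,1)$ and $\psi(t)=\sqrt{t^{2}-t}-\mathrm{arccosh}\sqrt t$ on $(1,\infty)$, so $\Theta=-\bigl(\tfrac34\phi\bigr)^{2/3}$, resp.\ $\Theta=\bigl(\tfrac34\psi\bigr)^{2/3}$. The same kind of computation as for $A'$ gives $\phi'(t)=-(\tfrac{1-t}{t})^{1/2}=-2A'(t)$ and $\psi'(t)=(\tfrac{t-1}{t})^{1/2}$; since $\phi(1)=\psi(1)=0$ it follows that $\phi>0$ on $(0,1)$ and $\psi>0$ on $(1,\infty)$, so $\Theta$ is well defined, smooth and nonzero on $(0,1)\cup(1,\infty)$, negative on $(0,1)$, positive on $(1,\infty)$, and $\Theta(1)=0$. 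The key step: substituting $s=1\pm(t-1)u$, $u\in[0,1]$, in $\phi(t)=\int_{t}^{1}\sqrt{\tfrac{1-s}{s}}\,ds$ and $\psi(t)=\int_{1}^{t}\sqrt{\tfrac{s-1}{s}}\,ds$ produces, in both ranges and because $-(1-t)u=(t-1)u$ makes the two integrands identical, the single formula
\[
\Theta(t)=\left(\tfrac34\right)^{2/3}(t-1)\,P(t)^{2/3},\qquad P(t):=\int_{0}^{1}\sqrt{\frac{u}{1+(t-1)u}}\,du .
\]
Here $P$ is positive and real-analytic on $(0,\infty)$ (differentiate under the integral sign), so $\Theta$ is smooth — in fact real-analytic — on all of $(0,\infty)$; since $P(1)=\tfrac23$ and $(\tfrac34\cdot\tfrac23)^{2/3}=2^{-2/3}$, this also yields $\Theta(t)=2^{-2/3}(t-1)(1+O(t-1))$, i.e.\ \eqref{eq:That t->1}. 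Differentiating $(-\Theta)^{3/2}=\tfrac34\phi$ and $\Theta^{3/2}=\tfrac34\psi$ and squaring gives $\Theta(t)\,\Theta'(t)^{2}=\tfrac{t-1}{4t}$ on $(0,\infty)\setminus\{1\}$; cancelling $t-1$ via the displayed formula for $\Theta$ gives $\Theta'(t)^{2}=\bigl(4t\,(\tfrac34)^{2/3}P(t)^{2/3}\bigr)^{-1}$, which extends continuously to $t=1$ with value $2^{-4/3}$, so $\Theta'$ never vanishes and, being continuous with $\Theta'(1)=2^{-2/3}>0$, is positive throughout. For \eqref{eq: theta grande}: on $\{t\ge1+\varepsilon\}$ the bound $\psi(t)=\sqrt{t^{2}-t}-\mathrm{arccosh}\sqrt t\le t$ gives $\Theta(t)\le(\tfrac34 t)^{2/3}$; for the lower bound, $\sqrt{t^{2}-t}=t\sqrt{1-1/t}$ and $\mathrm{arccosh}\sqrt t=\log(\sqrt t+\sqrt{t-1})=O(\log t)$ force $\psi(t)\ge\tfrac14 t$ for $t\ge T_{0}$ (absolute $T_{0}$), hence $\Theta(t)\ge c\,t^{2/3}$ there, while on the compact set $[1+\varepsilon,T_{0}]$ the continuous positive function $\Theta(t)t^{-2/3}$ is bounded below; taking the smaller constant finishes.

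\textbf{Main obstacle.} All of the above is bookkeeping with elementary inequalities and Taylor expansions, except the behaviour of $\Theta$ across $t=1$, where the two branches are glued and the $(t-1)^{3/2}$ zeros of $\phi$ and $\psi$ must cancel against the exterior $2/3$-power. The substitution $s=1\pm(t-1)u$ is exactly what exposes this cancellation, collapsing the two one-sided definitions into the one clean formula $\Theta(t)=(\tfrac34)^{2/3}(t-1)P(t)^{2/3}$ on the whole half-line; smoothness, the local expansion at $1$, and the sign of $\Theta'$ then all follow from properties of the single analytic function $P$.
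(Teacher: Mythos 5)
Your proof is correct; every computation checks out (including the identity $tA'(t)-\tfrac12A(t)=\tfrac14\bigl(\sqrt{t-t^{2}}-\arcsin\sqrt t\bigr)$, the single formula $\Theta(t)=(\tfrac34)^{2/3}(t-1)P(t)^{2/3}$ with $P(t)=\int_{0}^{1}\sqrt{u/(1+(t-1)u)}\,du$, and the relation $\Theta\,(\Theta')^{2}=\tfrac{t-1}{4t}$). You correctly identify the only genuine issue, the regularity of $\Theta$ across $t=1$, but you resolve it by a different device than the paper. The paper expands $\arccos\sqrt t-\sqrt{t-t^{2}}$ and $\sqrt{t^{2}-t}-\mathrm{arccosh}\sqrt t$ in power series of $\sqrt{1-t}$, respectively $\sqrt{t-1}$ (via the Taylor series of $\arcsin$ and $\mathrm{arcsinh}$), observes that both branches produce the same factor $(t-1)^{1/3}F(t)$ outside the $2/3$-power, and extracts the leading coefficient $2/3$ to get smoothness and $\Theta'(1)>0$; whereas you write $\phi$ and $\psi$ as integrals of their (easily computed) derivatives and use the affine substitution $s=1+(t-1)u$ to collapse the two branches into one analytic expression at once. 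The two arguments buy the same conclusion; yours avoids series manipulations entirely and gives real-analyticity of $\Theta$ on $(0,\infty)$ as a byproduct, while the paper's series computation more directly exhibits the coefficient needed for \eqref{eq:That t->1}. For \eqref{eq: theta grande} the paper is slicker: it shows $\frac{d}{dt}\bigl(\Theta(t)^{3/2}/t\bigr)=\tfrac34\,\mathrm{arccosh}(\sqrt t)/t^{2}>0$, so the monotone quantity $\Theta^{3/2}/t$ is bounded below by its value at $1+\varepsilon$ and above by its limit $3/4$ at infinity, yielding both inequalities simultaneously; your route via $\psi(t)\leqslant t$, an asymptotic lower bound for large $t$, and compactness on $[1+\varepsilon,T_{0}]$ is more pedestrian but equally valid.
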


\begin{proof}
While (\ref{eq: A'}) is straightforward, (\ref{eq:Bound A(t)}) follows
from the fact that the function $A\left(t\right)t^{-1/2}$ is decreasing.
The asymptotic (\ref{eq:asympt A}) follows from the definition of
$A$. From the definition of $\Theta$ it is easy to check that for
$t\neq1$, $\Theta$ is smooth and $\Theta'\left(t\right)>0$. The
only part that requires some effort is the smoothness of $\Theta$
for $t=1$. We start from the Taylor expansion
\[
\arcsin z=\sum_{n=0}^{+\infty}a_{n}z^{2n+1}
\]
with
\[
a_{0}=1\text{ and }a_{n}=\frac{1\cdot3\cdots\left(2n-1\right)}{2\cdot4\cdots\left(2n\right)}\frac{1}{\left(2n+1\right)}.
\]
Then for $0\leqslant t<1$, setting $b_{n}=\binom{1/2}{n}$, we expand
$\Theta\left(t\right)$ as follows:
\begin{align*}
\Theta\left(t\right)= & -\left(\frac{3}{4}\left(\sum_{n=0}^{+\infty}a_{n}\left(\sqrt{1-t}\right)^{2n+1}-\sqrt{1+\left(t-1\right)}\left(1-t\right)^{1/2}\right)\right)^{2/3}\\
= & -\left(\frac{3}{4}\left(1-t\right)^{1/2}\left[\sum_{n=0}^{+\infty}a_{n}\left(1-t\right)^{n}-\sum_{n=0}^{+\infty}b_{n}\left(t-1\right)^{n}\right]\right)^{2/3}\\
= & -\left(\frac{3}{4}\right)^{2/3}\left(1-t\right)^{1/3}F\left(t\right).
\end{align*}
The Taylor expansion of ${\rm arcsinh}$ is closely related to that
of $\arcsin$. In fact,
\[
{\rm arcsinh}\left(z\right)=\sum_{n=0}^{+\infty}\left(-1\right)^{n}a_{n}z^{2n+1}.
\]
For $t\geqslant1$, since $\text{arccosh}\sqrt{t}=\text{arcsinh}\sqrt{t-1}$,
we obtain
\begin{align*}
\Theta\left(t\right)= & \left(\frac{3}{4}\left(\sqrt{1+\left(t-1\right)}\sqrt{t-1}-\sum_{n=0}^{+\infty}\left(-1\right)^{n}a_{n}\left(\sqrt{t-1}\right)^{2n+1}\right)\right)^{2/3}\\
= & \left(\frac{3}{4}\left(t-1\right)^{1/2}\left(\sum_{n=0}^{+\infty}b_{n}\left(t-1\right)^{n}-\sum_{n=0}^{+\infty}a_{n}\left(1-t\right)^{n}\right)\right)^{2/3}\\
= & \left(\frac{3}{4}\right)^{2/3}\left(t-1\right)^{1/3}F\left(t\right).
\end{align*}
It follows that
\begin{align*}
\Theta\left(t\right)= & \left(\frac{3}{4}\right)^{2/3}\left(t-1\right)^{1/3}F\left(t\right)
\end{align*}
Since $a_{0}=1$, $a_{1}=1/6$, $b_{0}=1$ and, $b_{1}=1/2$ we have
\begin{align*}
\sum_{n=0}^{+\infty}a_{n}\left(1-t\right)^{n}-\sum_{n=0}^{+\infty}b_{n}\left(t-1\right)^{n}=-\left(t-1\right)\left[\frac{2}{3}+\sum_{n=1}^{+\infty}c_{n}\left(t-1\right)^{n}\right]
\end{align*}
and therefore
\[
\Theta\left(t\right)=\left(t-1\right)\left(\frac{1}{2}+\frac{3}{4}\sum_{n=1}^{+\infty}c_{n}\left(t-1\right)^{n}\right)^{2/3}.
\]
In particular $\Theta\left(t\right)$ is smooth in a neighborhood
of $t=1$ and $\Theta'\left(1\right)>0$. Observe that also (\ref{eq:That t->1})
follows. Finally, it is easy to check that for $t\geqslant1+\varepsilon$
we have 
\[
\frac{d}{dt}\left(\frac{\Theta\left(t\right)^{3/2}}{t}\right)=\frac{3}{4}\frac{\mathrm{arccosh}\sqrt{t}}{t^{2}}>0,
\]
and then (\ref{eq: theta grande}) follows.
\end{proof}

\subsection{The estimates for Laguerre polynomials of Frenzen and Wong}

In this paragraph we recall the uniform asymptotic estimates for Laguerre
functions due to Frenzen and Wong \cite{FrenzenWong1988}. These formulas
describe the oscillatory and exponential regimes of these functions
in terms of Bessel and Airy functions, and will serve as a key tool
in our analysis.

We recall that 
\[
\nu=4k+2n.
\]
In the following, we will use $k$ and $\nu$ interchangeably
whenever no confusion can arise. We also notice that 
\begin{equation}
r_{k}^{2}=\frac{k!}{\left(n+k-1\right)!}=\frac{1}{\left(k+1\right)\cdots\left(k+n-1\right)}\approx\nu^{1-n}.\label{eq: rk}
\end{equation}

\begin{thm}
\label{thm: FW}The following estimates hold uniformly over the respective
intervals.
\end{thm}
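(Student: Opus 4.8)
The plan is to regard Theorem \ref{thm: FW} as a restatement, in the normalization fixed by \eqref{eq:r_k} and \eqref{eq: rk}, of the uniform asymptotic expansions of generalized Laguerre functions due to Frenzen and Wong \cite{FrenzenWong1988} (see also \cite{Erdelyi1960, AskeyWainger1965}); the proof will therefore amount to recording the relevant formulas in our conventions and quoting \cite{FrenzenWong1988} for their validity. The starting point is the rescaling $x=\nu t$ with $\nu=4k+2n$, after which $\Lambda_k^{\delta}(x)$ becomes a function of $t$ whose governing second-order ODE has a Bessel-type regular singularity at $t=0$ and a simple (``soft'') turning point at $t=1$. This is precisely where the functions $A$ and $\Theta$ of Lemma \ref{lem: A beta alpha} enter: they are the Liouville--Green variables of the two comparison equations needed to handle these two features.

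On an interval $0\le t\le 1-\varepsilon$ the natural comparison equation is the Bessel equation of order $\delta$: the Liouville--Green change of variable uses $A(t)$, whose derivative $A'(t)=\frac{1}{2}\big((1-t)/t\big)^{1/2}$ in \eqref{eq: A'} is the square root of the potential there, and the leading term of the expansion is expressed through $\widetilde{J}_{\delta}(\nu A(t))$ as in \eqref{eq: def J tilde} and \eqref{eq:J_tilde asympt}. On a full neighbourhood of the turning point $t=1$ one instead uses an Airy comparison equation with Liouville variable $\nu^{2/3}\Theta(t)$, so that the leading term involves $\widetilde{\mathrm{Ai}}(\nu^{2/3}\Theta(t))$ and the oscillatory and exponential Airy phases are $\frac{2}{3}(-\Theta(t))^{3/2}$ and $\frac{2}{3}\Theta(t)^{3/2}$; the smoothness of $\Theta$ at $t=1$ proved in Lemma \ref{lem: A beta alpha} is exactly what makes this Airy transformation legitimate across the turning point, and \eqref{eq: theta grande} governs it in the exponentially decaying regime. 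Matching the two expansions on their common range $t\approx 1-\varepsilon$ and patching them together yields estimates valid for all $t\ge 0$, which is the content of the theorem.

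The genuinely hard part — which I would import wholesale from \cite{FrenzenWong1988} rather than reprove — is the uniform estimation of the remainders across the turning point, carried out there by means of explicit integral representations of the error terms together with bounds on Bessel and Airy comparison functions of the kind collected in this section (\eqref{eq: def J tilde}, \eqref{eq:J_tilde asympt}, \eqref{eq:Stime Airy}, Lemma \ref{lem: stime IA}). On our side the only point that needs checking is the dictionary between the conventions of \cite{FrenzenWong1988} and ours, namely the rescaling $\frac{1}{2}|\lambda||z|^{2}=\nu t$ together with the normalizing constant $r_{k}\approx\nu^{(1-n)/2}$ from \eqref{eq: rk}; this is routine bookkeeping, and it is where the displayed estimates acquire the exact form that will be used in Sections \ref{sec:Estimate I} and \ref{sec:Estimate J}.
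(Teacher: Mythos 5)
Your proposal matches the paper exactly: the paper gives no proof of Theorem \ref{thm: FW} but simply quotes the uniform asymptotic expansions of Frenzen and Wong \cite{FrenzenWong1988} in the normalization fixed by \eqref{eq:r_k} and \eqref{eq: rk}, referring to \cite{FrenzenWong1988} for all details. Your additional remarks on the Liouville--Green variables $A$ and $\Theta$ and the turning point at $t=1$ are a correct gloss on where those expansions come from, but the essential step --- importing the result wholesale after checking the dictionary of conventions --- is precisely what the paper does.
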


\begin{enumerate}
\item \label{enu:1 est}(Bessel regime) There exist functions $\alpha_{0},$
$\beta_{1}\in C^{\infty}\left(0,1\right)$ such that, for any $a\in\left(0,1\right)$,
and for $0\leqslant x\leqslant a\nu$, (with the notation $t=x/\nu$),
one has
\[
\Lambda_{k}^{n-1}\left(x\right)=2^{1-n}r_{k}x^{\frac{n-1}{2}}\left[\alpha_{0}\left(t\right)\frac{J_{n-1}\left(\nu A\left(t\right)\right)}{A\left(t\right)^{n-1}}-\beta_{1}\left(t\right)\frac{2}{\nu}\frac{J_{n}\left(\nu A\left(t\right)\right)}{A\left(t\right)^{n}}+\varepsilon_{2}\left(x\right)\right],
\]
 where $\varepsilon_{2}\left(\nu t\right)=O\left(\nu^{-2}A\left(t\right)^{1-n}\widetilde{J}_{n-1}\left(\nu A\left(t\right)\right)\right)$
uniformly in $0\leqslant t\leqslant a$. 
\item \label{enu:2 est}(Airy regime) There exist functions $\eta_{0},$
$\xi_{1}\in C^{1}\left(0,\infty\right)$ such that, for any $b>0$
and for $x\geqslant b\nu$, (again with $t=x/\nu$), one has 
\[
\Lambda_{k}^{n-1}(x)=\left(-1\right)^{k}2^{1-n}r_{k}x^{\frac{n-1}{2}}\left[\frac{\eta_{0}\left(t\right)}{\nu^{1/3}}\mathrm{Ai}\left(\nu^{2/3}\Theta\left(t\right)\right)-\frac{\xi_{1}\left(t\right)}{\nu^{5/3}}\mathrm{Ai}'\left(\nu^{2/3}\Theta\left(t\right)\right)+\varepsilon_{2}\left(x\right)\right],
\]
where $\varepsilon_{2}\left(\nu t\right)=O\left(\nu^{-7/3}\left|\widetilde{\mathrm{Ai}}\left(\nu^{2/3}\Theta\left(t\right)\right)\right|\right)$
uniformly for $t\geqslant b$.
\end{enumerate}
We recall only the definitions of $\alpha_{0}$ and $\eta_{0}$, since
the explicit expressions for $\beta_{1}$ and $\xi_{1}$ are more
involved. For the latter we simply note their asymptotic behavior
as $t\to0^{+}$ and $t\to1$. We have
\begin{align}
\alpha_{0}\left(t\right)= & \left(1-t\right)^{-1/4}\left(\frac{A\left(t\right)}{\sqrt{t}}\right)^{n-1/2}=1+O\left(t\right), &  & t\to0^{+},\label{eq: def alpha 0}\\
\eta_{0}\left(t\right)= & t^{1/4-n/2}\left(\frac{4\Theta\left(t\right)}{t-1}\right)^{1/4}=2^{1/3}\left(1+O\left(1-t\right)\right), &  & t\to1,\label{eq:eta_0}\\
\beta_{1}\left(t\right)= & O\left(t\right) &  & t\to0^{+}\label{eq: beta}\\
\eta_{1}\left(t\right)= & O\left(1\right) &  & t\to1.\nonumber 
\end{align}
See \cite{FrenzenWong1988} for the details.

\section{\label{sec:Estimate I}Estimate for the term $\mathcal{I}$ in (\ref{eq:I x J})}

This section is devoted to estimate the term $\mathcal{I}$ in (\ref{eq:I x J})
from below. To this end, we first establish pointwise estimates for
$\widehat{\chi_{B_{\rho}}}$ and then derive lower bounds for its
averages. Finally, by restricting these bounds to the region $F_{\Lambda}$,
we obtain the desired estimates for $\mathcal{I}$.

\subsection{Pointwise estimates for $\widehat{\chi_{B}}$}

Observe that, since by (\ref{eq:Dilatazioni fourier})
\[
\widehat{\chi_{B_{\rho}}}\left(\lambda,k\right)=\rho^{2n+2}\widehat{\chi_{B}}\left(\lambda\rho^{2},k\right)
\]
where $B=B_{1}$, it suffices to estimate $\widehat{\chi_{B}}$. Furthermore,
since $\widehat{\chi_{B}}\left(\cdot,k\right)$ is even we can assume
$\lambda>0$. We obtain the following.
\begin{thm}
\label{thm:stime}There exists a function $\Omega\left(\lambda,\nu\right)\approx1$
defined on $\mathbb{R^{*}}\times\mathbb{N}$ such that
\begin{enumerate}
\item \label{enu:thm a}for every $0<\lambda\leqslant\nu$, we have 
\begin{align*}
\widehat{\chi_{B}}\left(\lambda,k\right) & =\frac{\sin\lambda}{\lambda}\left[\frac{J_{n}\left(\nu A\left(\frac{\lambda}{2\nu}\right)\right)}{\left(\lambda\nu\right)^{n/2}}\Omega\left(\lambda,\nu\right)+O\left(\frac{1}{\nu^{2}\left\langle \lambda\nu\right\rangle ^{n/2-1/4}}\right)\right];
\end{align*}
\item \label{enu:thm b}for every $\nu<\lambda\leqslant2\left(\nu+\nu^{1/3}\right)$,
we have
\[
\widehat{\chi_{B}}\left(\lambda,k\right)=\left(-1\right)^{k}\frac{\sin\lambda}{\lambda}\frac{1}{\left(\lambda\nu\right)^{n/2}}\left(\mathrm{IAi}\left(\nu^{2/3}\Theta\left(\frac{\lambda}{2\nu}\right)\right)\Omega\left(\lambda,\nu\right)+O\left(\frac{1}{\nu^{2/3}}\left\langle \nu^{2/3}\left(1-\frac{\lambda}{2\nu}\right)\right\rangle ^{-5/4}\right)\right);
\]
\item \label{enu:thm c}for every $2\left(\nu+\nu^{1/3}\right)<\lambda\leqslant3\nu$,
we have
\[
\widehat{\chi_{B}}\left(\lambda,k\right)=\left(-1\right)^{k}\frac{\sin\lambda}{\lambda^{n+1}}\left(\Omega\left(\lambda,\nu\right)+O\left(\frac{1}{\nu^{2/3}}\right)\right);
\]
\item \label{enu:thm d}there exists $c>0$ such that for every $\lambda>3\nu$,
we have
\[
\widehat{\chi_{B}}\left(\lambda,k\right)=\left(-1\right)^{k}\frac{\sin\lambda}{\lambda^{n+1}}2^{\frac{3n+1}{2}}\left(1+O\left(\nu^{-\frac{n-1}{2}}\lambda^{n/2-1}e^{-c\lambda}\right)\right).
\]
\end{enumerate}
\end{thm}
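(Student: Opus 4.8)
\emph{Plan of proof.} The first step is to reduce everything to a one‑dimensional Laguerre integral. Since $B=B_{1}$ is cylindrically radial with profile $\chi_{\{|z|\leqslant1\}}\chi_{\{|t|\leqslant1\}}$, one has $(\chi_{B})_{\#}^{\lambda}(r)=\frac{2\sin\lambda}{\lambda}\chi_{\{r\leqslant1\}}$, so Theorem \ref{thm:Fourier transf radial} together with the substitution $x=\frac12\lambda r^{2}$ gives, for $\lambda>0$,
\[
\widehat{\chi_{B}}(\lambda,k)=\frac{2^{\frac{n+1}{2}}\sin\lambda}{\lambda^{n+1}}\,r_{k}\int_{0}^{\lambda/2}x^{\frac{n-1}{2}}\Lambda_{k}^{n-1}(x)\,dx .
\]
Thus the whole statement reduces to the asymptotics of $\mathcal{K}(\lambda,k):=r_{k}\int_{0}^{\lambda/2}x^{\frac{n-1}{2}}\Lambda_{k}^{n-1}(x)\,dx$. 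Using $\Lambda_{k}^{n-1}(x)=r_{k}x^{\frac{n-1}{2}}e^{-x/2}L_{k}^{n-1}(x)$ and the Laplace transform of the Laguerre polynomials, I would also record the closed form $r_{k}\int_{0}^{\infty}x^{\frac{n-1}{2}}\Lambda_{k}^{n-1}(x)\,dx=(-1)^{k}2^{n}$, which will produce the constants in (3) and (4). In each of the four $\lambda$-ranges I would insert the Frenzen--Wong expansions of Theorem \ref{thm: FW} after passing to $t=x/\nu$; the ranges are tailored so that a single regime — Bessel for $\lambda\leqslant\nu$, Airy for $\lambda>\nu$ — controls the leading term, and $\Omega$ will be, by definition, the $\approx1$ amplitude that emerges from the calculation in each regime (its value for $\lambda>3\nu$ being immaterial).

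For case (1), $0<\lambda\leqslant\nu$, the whole integral is in the Bessel regime, so I would insert part (1) of Theorem \ref{thm: FW} with $a=\frac12$. For the leading term $\alpha_{0}(t)A(t)^{1-n}J_{n-1}(\nu A(t))$ I would integrate by parts using $\frac{d}{ds}\big(s^{n}J_{n}(s)\big)=s^{n}J_{n-1}(s)$ with $s=\nu A(t)$ and $A'(t)=\frac12\sqrt{(1-t)/t}$; the endpoint contribution at $t=0$ vanishes since $s^{n}J_{n}(s)=O(s^{2n})$, and the one at $t=\frac{\lambda}{2\nu}$, after collecting prefactors, equals $\frac{J_{n}(\nu A(\lambda/(2\nu)))}{(\lambda\nu)^{n/2}}$ times an explicit amplitude that is $\approx1$ because $r_{k}^{2}\nu^{n-1}\approx1$ by \eqref{eq: rk}, $(1-t)^{-3/4}\approx1$ on $[0,\frac12]$, and $A(t)^{1/2}t^{-1/4}\approx1$ by \eqref{eq:Bound A(t)}; this fixes $\Omega$ on $\{\lambda\leqslant\nu\}$. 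The integration‑by‑parts remainder, the secondary term $\beta_{1}(t)\frac{2}{\nu}A(t)^{-n}J_{n}(\nu A(t))$ (using \eqref{eq: beta}), and the error $\varepsilon_{2}$ would be bounded through $\widetilde{J}_{n-1}(z)\approx z^{n-1}\langle z\rangle^{-n+1/2}$ and $\widetilde{J}_{n}(z)\approx z^{n}\langle z\rangle^{-n-1/2}$ from \eqref{eq:J_tilde asympt}, a further integration by parts exploiting the oscillation of the Bessel functions, and \eqref{eq:Bound A(t)}, yielding the claimed error $O\big(\nu^{-2}\langle\lambda\nu\rangle^{-n/2+1/4}\big)$.

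For cases (2)--(4), $\lambda>\nu$, I would proceed according to the distance from the turning point $\lambda\approx2\nu$. For case (2), $\nu<\lambda\leqslant2(\nu+\nu^{1/3})$, I would split $\int_{0}^{\lambda/2}=\int_{0}^{\nu/2}+\int_{\nu/2}^{\lambda/2}$, applying part (1) of Theorem \ref{thm: FW} on $[0,\nu/2]$ and part (2) on $[\nu/2,\lambda/2]$; on the Airy piece, for the leading term $\nu^{-1/3}\eta_{0}(t)\mathrm{Ai}(\nu^{2/3}\Theta(t))$ I would substitute $v=\nu^{2/3}\Theta(t)$ (legitimate since $\Theta'>0$ by Lemma \ref{lem: A beta alpha}) and integrate by parts via $\frac{d}{dv}\mathrm{IAi}(v)=\mathrm{Ai}(v)$. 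The upper endpoint produces $\mathrm{IAi}(\nu^{2/3}\Theta(\lambda/(2\nu)))$ times an amplitude $\approx1$, with $\eta_{0}(1)/\Theta'(1)=2$ from \eqref{eq:eta_0} and \eqref{eq:That t->1}, and with $\nu^{2/3}\Theta(\lambda/(2\nu))\lesssim1$ because $\lambda\leqslant2(\nu+\nu^{1/3})$; a second integration by parts, now against $\mathrm{IIAi}$, yields the dominant error $O\big(\nu^{-2/3}\langle\nu^{2/3}(1-\frac{\lambda}{2\nu})\rangle^{-5/4}\big)$ by Lemma \ref{lem: stime IA}, the $\xi_{1}\mathrm{Ai}'$ term and the error $\varepsilon_{2}$ (controlled via $\widetilde{\mathrm{Ai}}$ and $\mathrm{IIAi}$) being of smaller order. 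The key point is that the two contributions at the artificial cut $x=\nu/2$ — the Bessel boundary term and the lower Airy boundary term $\mathrm{IAi}(\nu^{2/3}\Theta(\frac12))$, both of size $\approx\nu^{-1/2}$ in $\mathcal{K}$ — cancel to the required order, because the two uniform expansions of $\Lambda_{k}^{n-1}$ agree on the overlap. For cases (3) and (4), $\lambda>2(\nu+\nu^{1/3})$, I would instead write $\int_{0}^{\lambda/2}=\int_{0}^{\infty}-\int_{\lambda/2}^{\infty}$, use the closed form above to get the constant $2^{n}$ (hence $2^{\frac{3n+1}{2}}$ after the prefactor), and bound the tail $\int_{\lambda/2}^{\infty}$, which lies entirely in the Airy regime where $\mathrm{Ai}(\nu^{2/3}\Theta(t))>0$ decays like $\exp(-c\,\nu\,\Theta(t)^{3/2})$: in case (3) the tail is $O(\nu^{-2/3})$, the borderline being $\lambda/2\approx\nu+\nu^{1/3}$, while in case (4), where $t>\frac32$ forces $\Theta(t)^{3/2}\gtrsim t$ by \eqref{eq: theta grande}, it is $O\big(\nu^{-\frac{n-1}{2}}\lambda^{n/2-1}e^{-c\lambda}\big)$; in case (3) one must moreover check that $2^{n}-(-1)^{k}r_{k}\int_{\lambda/2}^{\infty}x^{\frac{n-1}{2}}\Lambda_{k}^{n-1}(x)\,dx$ stays between two positive constants, which defines $\Omega$ there.

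The hardest part will be making all the error estimates \emph{uniform} in the pair $(\lambda,k)$. The two genuinely delicate issues are: (i) the cancellation of the Bessel and Airy contributions at the cut $x=\nu/2$, which rests on the precise matching of the two Frenzen--Wong expansions of $\Lambda_{k}^{n-1}$ on their common domain; and (ii) the passage through the turning point $\lambda\approx2\nu$, where the controlling function switches from $J_{n}$ to $\mathrm{IAi}$ and the local behaviour of $\Theta$ near $t=1$ from \eqref{eq:That t->1} enters. A secondary but unavoidable chore is to verify that the amplitudes produced in the three regimes are all $\approx1$ and can be assembled into one function $\Omega$ on $\mathbb{R}^{*}\times\mathbb{N}$, which uses the exact form of $r_{k}$ in \eqref{eq: rk} and the expansions of $A$ and $\Theta$ near $0$ and $1$ from Lemma \ref{lem: A beta alpha}.
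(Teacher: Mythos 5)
Your reduction to $\mathcal{K}(\lambda,k)=r_k\int_0^{\lambda/2}x^{\frac{n-1}{2}}\Lambda_k^{n-1}(x)\,dx$, your treatment of case (1) (Bessel regime, integration by parts via $\frac{d}{ds}(s^nJ_n(s))=s^nJ_{n-1}(s)$), and your treatment of case (4) (subtraction against the closed form $(-1)^k2^n$ plus an exponential tail bound) all coincide with the paper's argument. The problems are in cases (2) and (3).

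In case (2) you cut sharply at $x=\nu/2$ and rely on the two boundary terms at the cut cancelling ``because the two uniform expansions agree on the overlap.'' This is the entire content of the case, and it is not automatic at the precision you need. In your normalization each boundary term — $\tfrac{2}{\nu}J_n(\nu A(\tfrac12))\,c_1$ from the Bessel side and $\tfrac1\nu\mathrm{IAi}(\nu^{2/3}\Theta(\tfrac12))\,c_2$ from the Airy side — is of size $\nu^{-1/2}$ in $\mathcal{K}$, whereas the target error $\nu^{-2/3}\langle\nu^{2/3}(1-\tfrac{\lambda}{2\nu})\rangle^{-5/4}$ is as small as $\nu^{-3/2}$ when $\lambda$ is near $\nu$. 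So you need cancellation to relative order $\nu^{-1}$, which forces you to verify (i) the exact phase identity $\tfrac23(-\Theta(t))^{3/2}=\tfrac\pi4-A(t)$ together with $\nu\pi/4=\pi k+n\pi/2$ (this is where the $(-1)^k$ must emerge), (ii) the exact equality of the two amplitudes built from $\alpha_0$, $\eta_0$, $A'$, $\Theta'$ at $t=\tfrac12$, and (iii) that the $O(z^{-3/2})$ correction in \eqref{eq:bessel gra}, the $O((-u)^{-9/4})$ correction in Lemma \ref{lem: stime IA}, and the $\beta_1$/$\xi_1$ boundary contributions all fit into the $O(\nu^{-5/2})$ budget — each of these is exactly borderline. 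None of this is carried out, and it is precisely what the paper avoids by inserting the smooth cutoff $\phi$ (supported so that $1-\phi$ vanishes for $t\geqslant\tfrac12$): with the cutoff there are no boundary terms at the junction at all, and the Bessel piece is simply $O(\nu^{-5/2})$ after repeated integration by parts.

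In case (3) your decomposition $\int_0^{\lambda/2}=\int_0^{\infty}-\int_{\lambda/2}^{\infty}$ does not deliver the lower bound $\Omega\gtrsim1$. Near the lower end of the range, $\lambda/(2\nu)=1+\nu^{-2/3}$, the tail is \emph{not} $O(\nu^{-2/3})$ relative to $2^n$: substituting $v=\nu^{2/3}\Theta(t)$ shows it is comparable to $2^n$ itself. You acknowledge that one must check $2^n-(-1)^kr_k\int_{\lambda/2}^{\infty}$ stays bounded below by a positive constant, but within your decomposition this check is circular — it is equivalent to a lower bound on the partial integral $\int_0^{\lambda/2}$, which is the quantity being estimated. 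The paper instead splits at $t=1+\nu^{-2/3}$ and obtains the lower bound directly: the piece up to $1+\nu^{-2/3}$ is $\tfrac1\nu$ times $\mathrm{IAi}(\nu^{2/3}\Theta(1+\nu^{-2/3}))\to\mathrm{IAi}(2^{-2/3})>0$ (by \eqref{eq:That t->1}), and the remaining piece is nonnegative because $\mathrm{Ai}(\nu^{2/3}\Theta(t))>0$ there; the upper bound follows since $\int\nu^{2/3}\Theta'(t)\mathrm{Ai}(\nu^{2/3}\Theta(t))\,dt\lesssim1$. You would need to import this positivity argument to make your case (3) work.
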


We split the proof of the four cases in the next paragraphs. Observe
that, since
\[
\chi_{B}\left(z,t\right)=\chi_{\left[-1,1\right]}\left(\left|z\right|\right)\chi_{\left[-1,1\right]}\left(t\right),
\]
we have
\[
\left(\chi_{B}\right)^{\lambda}\left(z\right)=\chi_{\left[-1,1\right]}\left(\left|z\right|\right)2\frac{\sin\lambda}{\lambda}.
\]
Therefore, by Theorem \ref{thm:Fourier transf radial}, we have
\begin{align}
\widehat{\chi_{B}}\left(\lambda,k\right)= & r_{k}2\frac{\sin\lambda}{\lambda}\int_{0}^{1}\left(\lambda r^{2}\right)^{\left(1-n\right)/2}\Lambda_{k}^{n-1}\left(\frac{1}{2}\lambda r^{2}\right)r^{2n-1}dr\label{eq: chi B}\\
= & r_{k}\frac{\sin\lambda}{\lambda^{n+1}}\left(2\nu\right)^{\left(n+1\right)/2}\int_{0}^{\frac{\lambda}{2\nu}}\Lambda_{k}^{n-1}\left(\nu t\right)t^{\left(n-1\right)/2}dt.\nonumber 
\end{align}

\subsubsection{Proof of (\ref{enu:thm a}) in Theorem \ref{thm:stime}.}

In the regime $0<\lambda\leqslant\nu$, if $t\in\left[0,\frac{\lambda}{2\nu}\right]$,
then $\nu t\leqslant\frac{\lambda}{2}\leqslant\frac{\nu}{2}$. Therefore,
by (\ref{enu:1 est}) in Theorem \ref{thm: FW} we get 
\begin{align*}
\widehat{\chi_{B}}\left(\lambda,k\right)= & r_{k}^{2}\frac{\sin\lambda}{\lambda^{n+1}}2^{\frac{3-n}{2}}\nu^{n}\left[\int_{0}^{\frac{\lambda}{2\nu}}J_{n-1}\left(\nu A\left(t\right)\right)\frac{t^{n-1}\alpha_{0}\left(t\right)}{A\left(t\right)^{n-1}}dt\right.\\
 & \qquad\qquad\qquad\left.-\frac{2}{\nu}\int_{0}^{\frac{\lambda}{2\nu}}J_{n}\left(\nu A\left(t\right)\right)\frac{t^{n-1}\beta_{1}\left(t\right)}{A\left(t\right)^{n}}dt+\int_{0}^{\frac{\lambda}{2\nu}}t^{n-1}\varepsilon_{2}\left(t\right)dt\right]\\
= & r_{k}^{2}\frac{\sin\lambda}{\lambda^{n+1}}2^{\frac{3-n}{2}}\nu^{n}\left[\textrm{I}_{1}-\textrm{I}_{2}+\textrm{I}_{3}\right].
\end{align*}

We start by estimating $\textrm{I}_{1}$. Recalling that $\frac{d}{du}\left(u^{n}J_{n}\left(u\right)\right)=u^{n}J_{n-1}\left(u\right)$,
and using the definition of $\alpha_{0}$ (\ref{eq: def alpha 0})
together with the derivative $A'$ (\ref{eq: A'}), we integrate by
parts and obtain
\begin{align*}
\textrm{I}_{1}= & \frac{1}{\nu^{n+1}}\int_{0}^{\frac{\lambda}{2\nu}}\left[\left(\nu A\left(t\right)\right)^{n}J_{n-1}\left(\nu A\left(t\right)\right)\right]\nu A'\left(t\right)\left[\frac{t^{n-1}\alpha_{0}\left(t\right)}{A'\left(t\right)A\left(t\right)^{2n-1}}\right]dt\\
= & \frac{2}{\nu^{n+1}}\int_{0}^{\frac{\lambda}{2\nu}}\frac{d}{dt}\left[\left(\nu A\left(t\right)\right)^{n}J_{n}\left(\nu A\left(t\right)\right)\right]\left[\frac{t^{n/2-1/4}}{\left(1-t\right)^{3/4}A\left(t\right)^{n-1/2}}\right]dt\\
= & \frac{2}{\nu}J_{n}\left(\nu A\left(\frac{\lambda}{2\nu}\right)\right)\frac{\left(\frac{\lambda}{2\nu}\right)^{n/2-1/4}A\left(\frac{\lambda}{2\nu}\right)^{1/2}}{\left(1-\frac{\lambda}{2\nu}\right)^{3/4}}\\
 & -\frac{2}{\nu^{n+1}}\int_{0}^{\frac{\lambda}{2\nu}}\left(\nu A\left(t\right)\right)^{n}J_{n}\left(\nu A\left(t\right)\right)\frac{d}{dt}\left[\frac{t^{n/2-1/4}}{\left(1-t\right)^{3/4}A\left(t\right)^{n-1/2}}\right]dt=\textrm{I}_{1,1}-\textrm{I}_{1,2}.
\end{align*}
Here the main term will be $\mathrm{I}_{1,1}$. 

The terms ${\rm I}_{1,2}$ and ${\rm I}_{2}$ have a similar behavior
and can be estimated together. Indeed,
\begin{align*}
\textrm{I}_{1,2}+\textrm{I}_{2} & =\frac{2}{\nu^{n+1}}\int_{0}^{\frac{\lambda}{2\nu}}\left(\nu A\left(t\right)\right)^{n}J_{n}\left(\nu A\left(t\right)\right)\left[\frac{d}{dt}\left(\frac{t^{n/2-1/4}}{\left(1-t\right)^{3/4}A\left(t\right)^{n-1/2}}\right)+\frac{t^{n-1}\beta_{1}\left(t\right)}{A\left(t\right)^{2n}}\right]dt\\
 & =\frac{2}{\nu^{n+3}}\int_{0}^{\frac{\lambda}{2\nu}}\nu A'\left(t\right)\left(\nu A\left(t\right)\right)^{n+1}J_{n}\left(\nu A\left(t\right)\right)F\left(t\right)dt,
\end{align*}
with
\[
F\left(t\right):=\frac{1}{A'\left(t\right)A\left(t\right)}\left[\frac{d}{dt}\left(\frac{t^{n/2-1/4}}{\left(1-t\right)^{3/4}A\left(t\right)^{n-1/2}}\right)+\frac{t^{n-1}\beta_{1}\left(t\right)}{A\left(t\right)^{2n}}\right].
\]
Since
\begin{align*}
A'\left(t\right)A\left(t\right) & =\frac{1}{4}\left(\left(\frac{1-t}{t}\right)^{1/2}\arcsin\left(\sqrt{t}\right)+1-t\right)\geqslant\frac{1}{4}\left(\left(1-t\right)^{1/2}+1-t\right)>\frac{1+\sqrt{2}}{8}, & t\in\left[0,\frac{1}{2}\right],
\end{align*}
the first factor is the inverse of a positive $C^{1}$ function, whereas
the latter factor is in $C^{1}\left(\left[0,\frac{1}{2}\right]\right)$
by the asymptotic expansions at $0^{+}$ of $A$ (\ref{eq:asympt A})
and $\beta_{1}$ (\ref{eq: beta}). Thus, $F\in C^{1}\left(\left[0,\frac{1}{2}\right]\right)$.
Integrating by parts, we have
\begin{align}
\textrm{I}_{1,2}+\textrm{I}_{2} & =\frac{2}{\nu^{2}}A\left(\frac{\lambda}{2\nu}\right)^{n+1}J_{n+1}\left(\nu A\left(\frac{\lambda}{2\nu}\right)\right)F\left(\frac{\lambda}{2\nu}\right)-\frac{2}{\nu^{2}}\int_{0}^{\frac{\lambda}{2\nu}}A\left(t\right)^{n+1}J_{n+1}\left(\nu A\left(t\right)\right)F'\left(t\right)dt\nonumber \\
 & =O\left(\frac{1}{\nu^{2}}\left(\frac{\lambda}{\nu}\right)^{n/2+1/2}\widetilde{J}_{n+1}\left(\nu A\left(\frac{\lambda}{2\nu}\right)\right)\right)+O\left(\frac{1}{\nu^{2}}\int_{0}^{\frac{\lambda}{2\nu}}t^{\frac{n+1}{2}}\widetilde{J}_{n+1}\left(\nu A\left(t\right)\right)dt\right)=\mathcal{O}_{1}+\mathcal{O}_{2},\label{eq:I1,2}
\end{align}
where we used the estimate for $A$ (\ref{eq:asympt A}), and the
definition of $\widetilde{J}_{n+1}$ (\ref{eq: def J tilde}). 

Finally, for $I_{3}$, we use the estimate for $\varepsilon_{2}$
provided by Theorem \ref{thm: FW} and (\ref{eq:asympt A}) to get
\begin{align}
\textrm{I}_{3} & =\int_{0}^{\frac{\lambda}{2\nu}}t^{n-1}\varepsilon_{2}\left(t\right)dt=O\left(\frac{1}{\nu^{2}}\int_{0}^{\frac{\lambda}{2\nu}}\widetilde{J}_{n-1}\left(\nu A\left(t\right)\right)\frac{t^{n-1}}{A\left(t\right)^{n-1}}dt\right)\nonumber \\
 & =O\left(\frac{1}{\nu^{2}}\int_{0}^{\frac{\lambda}{2\nu}}t^{\frac{n-1}{2}}\widetilde{J}_{n-1}\left(\nu A\left(t\right)\right)dt\right)=\mathcal{O}_{3}.\label{eq:I3}
\end{align}

We now show that $\mathcal{O}_{3}$ is larger than $\mathcal{O}_{1}$
and $\mathcal{O}_{2}$. Notice that, by (\ref{eq:J_tilde asympt}),
we have
\[
\tilde{J}_{n}\left(\nu A\left(t\right)\right)\approx\frac{\left(\nu A\left(t\right)\right)^{n}}{\left\langle \nu A\left(t\right)\right\rangle ^{n+1/2}}\approx\frac{\left(\nu t^{1/2}\right)^{n}}{\left\langle \nu t^{1/2}\right\rangle ^{n+1/2}}.
\]
A simple computation shows that 
\[
\mathcal{O}_{3}=O\left(\lambda^{n}\nu^{-3}\left\langle \lambda\nu\right\rangle ^{-n/2+1/4}\right).
\]
Similarly we obtain
\[
\mathcal{O}_{1}=O\left(\lambda^{n+1}\nu^{-2}\left\langle \lambda\nu\right\rangle ^{-n/2-3/4}\right)=O\left(\lambda^{n}\nu^{-3}\left\langle \lambda\nu\right\rangle ^{-n/2+1/4}\right).
\]
On the other hand, since
\[
t\tilde{J}_{n+1}\left(\nu A\left(t\right)\right)\lesssim\tilde{J}_{n-1}\left(\nu A\left(t\right)\right),
\]
we have $\mathcal{O}_{2}\lesssim\mathcal{O}_{3}$. 

We have proved that 
\begin{align*}
\textrm{I}_{1}-\textrm{I}_{2}+\textrm{I}_{3} & =\textrm{I}_{1,1}+O\left(\lambda^{n}\nu^{-3}\left\langle \lambda\nu\right\rangle ^{-n/2+1/4}\right)
\end{align*}
By $r_{k}^{2}\approx\nu^{1-n}$, the asymptotic expansion of $A$
(\ref{eq:asympt A}) and the fact that $t\in\left[0,\frac{1}{2}\right]$,
we finally get
\begin{align*}
\widehat{\chi_{B}}\left(\lambda,k\right) & =\frac{\sin\lambda}{\lambda}\left[\frac{J_{n}\left(\nu A\left(\frac{\lambda}{2\nu}\right)\right)}{\left(\lambda\nu\right)^{n/2}}\Omega_{1}\left(\lambda,\nu\right)+O\left(\nu^{-2}\left\langle \lambda\nu\right\rangle ^{-n/2+1/4}\right)\right],
\end{align*}
where
\[
\Omega\left(\lambda,\nu\right)=c\left[r_{k}^{2}\nu^{n-1}\right]\left(\frac{\lambda}{\nu}\right)^{-1/4}A\left(\frac{\lambda}{2\nu}\right)^{1/2}\left(1-\frac{\lambda}{2\nu}\right)^{-3/4}\approx1.
\]

\subsubsection{Proof of (\ref{enu:thm b}) in Theorem \ref{thm:stime}.}

The regime $\nu<\lambda\leqslant2\left(\nu+\nu^{1/3}\right)$ is covered
by a mix of estimates (\ref{enu:1 est}) and (\ref{enu:2 est}) in
Theorem \ref{thm: FW}. To smoothly connect the two estimates and
avoid boundary terms we introduce a cut-off function $\phi\in C^{\infty}\left(\mathbb{R}\right)$
such that $\phi\left(t\right)=0$ for $t\leqslant\frac{1}{3}$ and
$\phi\left(t\right)=1$ for $t\geqslant\frac{1}{2}$, and we write
\begin{equation}
\int_{0}^{\frac{\lambda}{2\nu}}\Lambda_{k}^{n-1}\left(\nu t\right)t^{\frac{n-1}{2}}dt=\int_{0}^{\infty}\Lambda_{k}^{n-1}\left(\nu t\right)t^{\frac{n-1}{2}}\left(1-\phi\left(t\right)\right)dt+\int_{0}^{\frac{\lambda}{2\nu}}\Lambda_{k}^{n-1}\left(\nu t\right)t^{\frac{n-1}{2}}\phi\left(t\right)dt.\label{eq: spezzo int}
\end{equation}
The first integral in the RHS can be controlled as in the previous
case. Exploiting the fact that $1-\phi$ vanishes for $t\geqslant\frac{1}{2}$
we obtain 
\begin{equation}
\int_{0}^{\infty}\Lambda_{k}^{n-1}\left(\nu t\right)t^{\frac{n-1}{2}}\left(1-\phi\left(t\right)\right)dt=O\left(\frac{1}{\nu^{5/2}}\right).\label{eq: int cutoff}
\end{equation}

To estimate the second integral we use (\ref{enu:2 est}) in Theorem
\ref{thm: FW} with $b=\frac{1}{3}$ to have 
\begin{align}
 & \int_{0}^{\frac{\lambda}{2\nu}}\Lambda_{k}^{n-1}\left(\nu t\right)t^{\frac{n-1}{2}}\phi\left(t\right)dt\nonumber \\
 & =\left(-1\right)^{k}r_{k}2^{1-n}\nu^{\frac{n-1}{2}}\int_{0}^{\frac{\lambda}{2\nu}}t^{n-1}\left[\frac{\eta_{0}\left(t\right)}{\nu^{1/3}}\mathrm{Ai}\left(\nu^{2/3}\Theta\left(t\right)\right)-\frac{\xi_{1}\left(t\right)}{\nu^{5/3}}\mathrm{Ai}'\left(\nu^{2/3}\Theta\left(t\right)\right)+\varepsilon_{2}\left(\nu t\right)\right]\phi\left(t\right)dt\label{eq: II1 II2 II3}\\
 & =\left(-1\right)^{k}r_{k}2^{1-n}\nu^{\frac{n-1}{2}}\left(\textrm{II}_{1}-\textrm{II}_{2}+\textrm{II}_{3}\right).\nonumber 
\end{align}
Since $\Theta'\left(t\right)>0$, integrating by parts and using the
definition of $\textrm{IAi}$ yields
\begin{align*}
\textrm{II}_{1}= & \frac{1}{\nu}\int_{0}^{\frac{\lambda}{2\nu}}\nu^{2/3}\Theta'\left(t\right)\mathrm{Ai}\left(\nu^{2/3}\Theta\left(t\right)\right)\frac{t^{n-1}\eta_{0}\left(t\right)\phi\left(t\right)}{\Theta'\left(t\right)}dt\\
= & \frac{1}{\nu}\mathrm{IAi}\left(\nu^{2/3}\Theta\left(\frac{\lambda}{2\nu}\right)\right)\frac{\left(\frac{\lambda}{2\nu}\right)^{n-1}\eta_{0}\left(\frac{\lambda}{2\nu}\right)}{\Theta'\left(\frac{\lambda}{2\nu}\right)}-\frac{1}{\nu}\int_{0}^{\frac{\lambda}{2\nu}}\mathrm{IAi}\left(\nu^{2/3}\Theta\left(t\right)\right)\frac{d}{dt}\left[\frac{t^{n-1}\eta_{0}\left(t\right)\phi\left(t\right)}{\Theta'\left(t\right)}\right]dt\\
= & \textrm{II}_{1,1}-\textrm{II}_{1,2}.
\end{align*}
The main term will be $\textrm{II}_{1,1}$. To estimate $\textrm{II}_{1,2}$,
we set 
\[
G_{1}\left(t\right):=\frac{1}{\Theta'\left(t\right)}\frac{d}{dt}\left[\frac{t^{n-1}\eta_{0}\left(t\right)\phi\left(t\right)}{\Theta'\left(t\right)}\right].
\]
Since $\Theta'>0$, $G_{1}$ is smooth. Integrating by parts gives
\begin{align*}
\textrm{II}_{1,2}= & \frac{1}{\nu^{5/3}}\int_{0}^{\frac{\lambda}{2\nu}}\nu^{2/3}\Theta'\left(t\right)\mathrm{IAi}\left(\nu^{2/3}\Theta\left(t\right)\right)G_{1}\left(t\right)dt\\
= & O\left(\frac{1}{\nu^{5/3}}\mathrm{IIAi}\left(\nu^{2/3}\Theta\left(\frac{\lambda}{2\nu}\right)\right)\right)+O\left(\frac{1}{\nu^{5/3}}\int_{0}^{\frac{\lambda}{2\nu}}\left|\mathrm{IIAi}\left(\nu^{2/3}\Theta\left(t\right)\right)\right|dt\right)
\end{align*}
In order to estimate the second error term, observe that, for $t\leqslant1-\nu^{-2/3}$,
we have 
\[
\nu^{2/3}\Theta\left(t\right)\leqslant\nu^{2/3}\Theta\left(1-\nu^{-2/3}\right)=-2^{-2/3}\left(1+O\left(\nu^{-2/3}\right)\right)\leqslant C<0,
\]
if $\nu$ large enough. By Lemma \ref{lem: stime IA} and (\ref{eq:That t->1}),
we have 
\[
\mathrm{IIAi}\left(\nu^{2/3}\Theta\left(t\right)\right)=O\left(\left\langle \nu^{2/3}\left(1-t\right)\right\rangle ^{-5/4}\right),
\]
and a straightforward computation gives
\[
\textrm{II}_{1,2}=O\left(\frac{1}{\nu^{5/3}}\left\langle \nu^{2/3}\left(1-\frac{\lambda}{2\nu}\right)\right\rangle ^{-5/4}\right).
\]
Let us consider $\textrm{II}_{2}$. Here we need the smooth function
\[
G_{2}\left(t\right):=\frac{t^{n-1}\xi_{1}\left(t\right)\phi\left(t\right)}{\Theta'\left(t\right)}.
\]
Integrating by parts and using Lemma \ref{lem: stime IA}, we obtain
\begin{align*}
\textrm{II}_{2} & =\frac{1}{\nu^{7/3}}\int_{0}^{\frac{\lambda}{2\nu}}\nu^{2/3}\Theta'\left(t\right)\mathrm{Ai}'\left(\nu^{2/3}\Theta\left(t\right)\right)G_{2}\left(t\right)dt\\
 & =O\left(\frac{1}{\nu^{7/3}}\mathrm{Ai}\left(\nu^{2/3}\Theta\left(\frac{\lambda}{2\nu}\right)\right)\right)+O\left(\frac{1}{\nu^{7/3}}\int_{0}^{\frac{\lambda}{2\nu}}\left|\mathrm{Ai}\left(\nu^{2/3}\Theta\left(t\right)\right)\right|dt\right)\\
 & =O\left(\frac{1}{\nu^{7/3}}\left\langle \nu^{2/3}\left(1-\frac{\lambda}{2\nu}\right)\right\rangle ^{-1/4}\right).
\end{align*}
Finally, for $\textrm{II}_{3}$, we have
\begin{align*}
\textrm{II}_{3} & =\int_{\frac{1}{3}}^{\frac{\lambda}{2\nu}}t^{n-1}\varepsilon_{2}\left(\nu t\right)\phi\left(t\right)dt=O\left(\frac{1}{\nu^{7/3}}\int_{\frac{1}{3}}^{\frac{\lambda}{2\nu}}t^{n-1}\left|\widetilde{\mathrm{Ai}}\left(\nu^{2/3}\Theta\left(t\right)\right)\right|dt\right)=O\left(\frac{1}{\nu^{5/2}}\right).
\end{align*}
Since both $\textrm{II}_{2}$ and $\textrm{II}_{3}$ are $O\left(\frac{1}{\nu^{5/3}}\left\langle \nu^{2/3}\left(1-\frac{\lambda}{2\nu}\right)\right\rangle ^{-5/4}\right)$,
we obtain 
\begin{align*}
\textrm{II}_{1}-\textrm{II}_{2}+\textrm{II}_{3} & =\textrm{II}_{1,1}+O\left(\frac{1}{\nu^{5/3}}\left\langle \nu^{2/3}\left(1-\frac{\lambda}{2\nu}\right)\right\rangle ^{-5/4}\right).
\end{align*}
Using (\ref{eq: spezzo int}), (\ref{eq: int cutoff}), and (\ref{eq: II1 II2 II3}),
we have
\begin{align}
\int_{0}^{\frac{\lambda}{2\nu}}\Lambda_{k}^{n-1}\left(\nu t\right)t^{\frac{n-1}{2}}dt= & \left(-1\right)^{k}r_{k}2^{1-n}\nu^{\frac{n-1}{2}}\left(\frac{1}{\nu}\mathrm{IAi}\left(\nu^{2/3}\Theta\left(\frac{\lambda}{2\nu}\right)\right)\frac{\left(\frac{\lambda}{2\nu}\right)^{n-1}\eta_{0}\left(\frac{\lambda}{2\nu}\right)}{\Theta'\left(\frac{\lambda}{2\nu}\right)}\right.\label{eq: int Lam reg 2}\\
 & \left.+O\left(\frac{1}{\nu^{5/3}}\left\langle \nu^{2/3}\left(1-\frac{\lambda}{2\nu}\right)\right\rangle ^{-5/4}\right)\right)\nonumber 
\end{align}
and finally, by (\ref{eq: chi B}), 
\[
\widehat{\chi_{B}}\left(\lambda,k\right)=\left(-1\right)^{k}\frac{\sin\lambda}{\lambda}\frac{1}{\left(\lambda\nu\right)^{n/2}}\left[\mathrm{IAi}\left(\nu^{2/3}\Theta\left(\frac{\lambda}{2\nu}\right)\right)\Omega\left(\lambda,\nu\right)+O\left(\frac{1}{\nu^{2/3}}\left\langle \nu^{2/3}\left(1-\frac{\lambda}{2\nu}\right)\right\rangle ^{-5/4}\right)\right]
\]
where we set
\[
\Omega\left(\lambda,\nu\right)=\left[r_{k}^{2}\nu^{n-1}\right]2^{3/2-n}\frac{\left(\frac{\lambda}{2\nu}\right)^{n/2-1}\eta_{0}\left(\frac{\lambda}{2\nu}\right)}{\Theta'\left(\frac{\lambda}{2\nu}\right)}\approx1.
\]

\subsubsection{Proof of (\ref{enu:thm c}) in Theorem \ref{thm:stime}.}

Here we consider the case $2\left(\nu+\nu^{1/3}\right)\leqslant\lambda\leqslant3\nu$.
We have 
\begin{align*}
\left(-1\right)^{k}\int_{0}^{\frac{\lambda}{2\nu}}\Lambda_{k}^{n-1}\left(t\nu\right)t^{\frac{n-1}{2}}dt= & \left(-1\right)^{k}\int_{0}^{1+\nu^{-2/3}}\Lambda_{k}^{n-1}\left(t\nu\right)t^{\frac{n-1}{2}}dt+\left(-1\right)^{k}\int_{1+\nu^{-2/3}}^{\frac{\lambda}{2\nu}}\Lambda_{k}^{n-1}\left(t\nu\right)t^{\frac{n-1}{2}}dt\\
= & \text{III}_{1}+\text{III}{}_{2}.
\end{align*}
We estimate $\text{III}_{1}$ using (\ref{eq: int Lam reg 2}). We
have 
\begin{align*}
\text{III}_{1} & =\frac{1}{\nu}\Omega\left(\nu\right)+O\left(\frac{1}{\nu^{5/3}}\right),
\end{align*}
where we set 
\[
\Omega\left(\nu\right)=r_{k}2^{1-n}\nu^{\frac{n-1}{2}}\mathrm{IAi}\left(\nu^{2/3}\Theta\left(1+\nu^{-2/3}\right)\right)\frac{\left(1+\nu^{-2/3}\right)^{n-1}\eta_{0}\left(1+\nu^{-2/3}\right)}{\Theta'\left(1+\nu^{-2/3}\right)}\approx1.
\]
To estimate $\text{III}_{2}$ it is enough the first order expansion
Laguerre polynomials in the Airy regime. By (5.13) and (5.19) in \cite{FrenzenWong1988}
we have
\[
\Lambda_{k}^{n-1}(x)=\left(-1\right)^{k}2^{1-n}r_{k}x^{\left(n-1\right)/2}\left[\frac{\eta_{0}\left(t\right)}{\nu^{1/3}}\mathrm{Ai}\left(\nu^{2/3}\Theta\left(t\right)\right)+\varepsilon_{1}\left(x\right)\right],
\]
where, by (\ref{eq:Stime Airy}), we rewrite the estimate for $\varepsilon_{1}$
in this region as
\[
\varepsilon_{1}\left(t\nu\right)=O\left(\frac{1}{\nu^{5/3}}\left|{\rm Ai}'\left(\nu^{2/3}\Theta\left(t\right)\right)\right|\right)=O\left(\frac{1}{\nu^{5/3}}\left(\nu^{2/3}\Theta\left(t\right)\right)^{1/4}e^{-\frac{2}{3}\left(\nu^{2/3}\Theta\left(t\right)\right)^{3/2}}\right).
\]
Hence
\[
\text{III}_{2}=2^{1-n}r_{k}\nu^{\frac{n-1}{2}}\int_{1+\nu^{-2/3}}^{\frac{\lambda}{2\nu}}\left[\frac{\eta_{0}\left(t\right)}{\nu^{1/3}}\mathrm{Ai}\left(\nu^{2/3}\Theta\left(t\right)\right)+\varepsilon_{1}\left(t\nu\right)\right]t^{n-1}dt=2^{1-n}r_{k}\nu^{\frac{n-1}{2}}\left(\text{III}_{2,1}+\text{III}_{2,2}\right).
\]
Observe that
\begin{align*}
\text{III}_{2,2} & =O\left(\int_{1+\nu^{-2/3}}^{\frac{\lambda}{2\nu}}\frac{1}{\nu^{5/3}}\left(\nu^{2/3}\Theta\left(t\right)\right)^{1/4}e^{-\frac{2}{3}\left(\nu^{2/3}\Theta\left(t\right)\right)^{3/2}}dt\right)\\
 & =O\left(\frac{1}{\nu^{7/3}}\int_{0}^{+\infty}u^{1/4}e^{-\frac{2}{3}u^{3/2}}du\right)=O\left(\frac{1}{\nu^{7/3}}\right),
\end{align*}
and therefore 
\begin{align*}
\text{III}_{1}+\text{III}_{2}= & \frac{1}{\nu}\widetilde{\Omega}\left(\lambda,\nu\right)+O\left(\frac{1}{\nu^{5/3}}\right)
\end{align*}
where we set $\widetilde{\Omega}\left(\lambda,\nu\right)=\Omega\left(\nu\right)+2^{1-n}r_{k}\nu^{\frac{n-1}{2}}\nu\text{III}_{2,1}$.
Since $\mathrm{Ai}\left(\nu^{2/3}\Theta\left(t\right)\right)>0$ we
have $\text{III}_{2,1}>0$ so that $c_{1}\leqslant\Omega\left(\nu\right)\leqslant\widetilde{\Omega}\left(\lambda,\nu\right)$.
Also, by (\ref{eq: rk}) and Lemma \ref{lem: A beta alpha},
\begin{align*}
\widetilde{\Omega}\left(\lambda,\nu\right)= & \Omega\left(\nu\right)+2^{1-n}r_{k}\nu^{\frac{n-1}{2}}\int_{1+\nu^{-2/3}}^{\frac{\lambda}{2\nu}}\nu^{2/3}\Theta'\left(t\right)\mathrm{Ai}\left(\nu^{2/3}\Theta\left(t\right)\right)\frac{\eta_{0}\left(t\right)t^{n-1}}{\Theta'\left(t\right)}dt\\
\lesssim & 1+\int_{1+\nu^{-2/3}}^{\frac{\lambda}{2\nu}}\nu^{2/3}\Theta'\left(t\right)\mathrm{Ai}\left(\nu^{2/3}\Theta\left(t\right)\right)dt\leqslant c_{2}.
\end{align*}
So that $\widetilde{\Omega}\left(\lambda,\nu\right)\approx1$. Collecting
the above estimates and using (\ref{eq: chi B}) gives 
\begin{align*}
\widehat{\chi_{B}}\left(\lambda,k\right) & =r_{k}\frac{\sin\lambda}{\lambda^{n+1}}\left(2\nu\right)^{\frac{n+1}{2}}\int_{0}^{\frac{\lambda}{2\nu}}\Lambda_{k}^{n-1}\left(\nu t\right)t^{\frac{n-1}{2}}dt\\
 & =r_{k}\frac{\sin\lambda}{\lambda^{n+1}}\left(2\nu\right)^{\frac{n+1}{2}}\left(-1\right)^{k}\left(\frac{1}{\nu}\tilde{\Omega}\left(\lambda,\nu\right)+O\left(\frac{1}{\nu^{5/3}}\right)\right)\\
 & =\left(-1\right)^{k}\frac{\sin\lambda}{\lambda^{n+1}}\left(\Omega\left(\lambda,\nu\right)+O\left(\frac{1}{\nu^{2/3}}\right)\right)
\end{align*}
for a suitable function $\Omega\left(\lambda,\nu\right)\approx1$. 

\subsubsection{Proof of (\ref{enu:thm d}) in Theorem \ref{thm:stime}.}

For this last estimate, we use a different technique. Since we know
the value of the integral in $\left(0,+\infty\right)$ we proceed
by subtraction,
\begin{equation}
\int_{0}^{\frac{\lambda}{2\nu}}\Lambda_{k}^{n-1}\left(\nu t\right)t^{\frac{n-1}{2}}dt=\int_{0}^{+\infty}\Lambda_{k}^{n-1}\left(\nu t\right)t^{\frac{n-1}{2}}dt-\int_{\frac{\lambda}{2\nu}}^{+\infty}\Lambda_{k}^{n-1}\left(\nu t\right)t^{\frac{n-1}{2}}dt.\label{eq:different technique}
\end{equation}
We will show that the first term is the main term. By \cite[18.17.34]{NIST},
we have
\[
\int_{0}^{\infty}L_{k}^{n-1}\left(x\right)e^{-\frac{1}{2}x}x^{n-1}dx=\left(-1\right)^{k}\frac{2^{n}}{r_{k}^{2}}.
\]
Hence,
\begin{align*}
\int_{0}^{+\infty}\Lambda_{k}^{n-1}\left(\nu t\right)t^{\frac{n-1}{2}}dt & =\frac{r_{k}}{\nu^{\frac{n+1}{2}}}\int_{0}^{+\infty}L_{k}^{n-1}\left(x\right)e^{-\frac{x}{2}}x^{n-1}dx=\left(-1\right)^{k}2^{n}r_{k}^{-1}\nu^{-\frac{n+1}{2}}.
\end{align*}
It remains to estimate the second integral in the RHS of (\ref{eq:different technique}).
Since $\nu^{2/3}\Theta\left(t\right)\geqslant C>0$, using \ref{enu:2 est}
of Theorem (\ref{thm: FW}) and (\ref{eq:Stime Airy}) we have
\begin{align*}
\int_{\frac{\lambda}{2\nu}}^{\infty}\Lambda_{k}^{\left(n-1\right)}\left(\nu t\right)t^{\frac{n-1}{2}}dt & =O\left(\frac{1}{\nu^{1/3}}\int_{\frac{\lambda}{2\nu}}^{\infty}t^{n-1}\eta_{0}\left(t\right)\mathrm{Ai}\left(\nu^{2/3}\Theta\left(t\right)\right)dt\right)\\
 & =O\left(\frac{1}{\nu^{1/3}}\int_{\frac{\lambda}{2\nu}}^{\infty}t^{n-1}\eta_{0}\left(t\right)\left(\nu^{2/3}\Theta\left(t\right)\right)^{-1/4}e^{-\frac{2}{3}\left(\nu^{2/3}\Theta\left(t\right)\right)^{3/2}}dt\right).
\end{align*}
Using (\ref{eq:eta_0}) and (\ref{eq: theta grande}) we obtain
\begin{align*}
\int_{\frac{\lambda}{2\nu}}^{\infty}\Lambda_{k}^{\left(n-1\right)}\left(\nu t\right)t^{\frac{n-1}{2}}dt= & O\left(\frac{1}{\nu^{1/2}}\int_{\frac{\lambda}{2\nu}}^{\infty}t^{n/2-1}e^{-c\,\nu t}dt\right)\\
= & O\left(\nu^{-\frac{n+1}{2}}\int_{c'\lambda}^{\infty}u^{n/2-1}e^{-u}du\right)=O\left(\nu^{-\frac{n+1}{2}}\lambda^{n/2-1}e^{-c'\lambda}\right).
\end{align*}
Hence
\begin{align*}
\widehat{\chi_{B}}\left(\lambda,k\right) & =r_{k}\frac{\sin\lambda}{\lambda^{n+1}}\left(2\nu\right)^{\frac{n+1}{2}}\left(\left(-1\right)^{k}2^{n}\nu^{-\frac{n+1}{2}}r_{k}^{-1}+O\left(\nu^{-\frac{n+1}{2}}\lambda^{n/2-1}e^{-c'\lambda}\right)\right)\\
 & =\left(-1\right)^{k}\frac{\sin\lambda}{\lambda^{n+1}}2^{\frac{3n+1}{2}}\left(1+O\left(\nu^{-\frac{n-1}{2}}\lambda^{n/2-1}e^{-c'\lambda}\right)\right).
\end{align*}

\subsection{Estimates from below of the square average of $\widehat{\chi_{B}}$}

The pointwise estimates for $\widehat{\chi_{B}}$ proved in the previous
section exhibit infinitely many zeros, and therefore no direct lower
bound is available. To overcome this difficulty, we average with respect
to the radius $\rho$ of the ball and estimate 
\begin{equation}
\int_{0}^{1}\left|\widehat{\chi_{B_{\rho}}}\left(\lambda,k\right)\right|^{2}d\rho=\int_{0}^{1}\rho^{2Q}\left|\widehat{\chi_{B}}\left(\rho^{2}\lambda,k\right)\right|^{2}d\rho,\label{eq:dilataz}
\end{equation}
where the identity follows from the Fourier dilation formula (\ref{eq:Dilatazioni fourier}).
This averaging suppresses oscillatory effects, so that only the decay
properties of the underlying Bessel and Airy terms remain, which can
then be controlled from below. We collect the resulting estimates
below.
\begin{prop}
\label{prop: stima media-1}There exists $\nu_{0}\in\mathbb{N}$ such
that for $\nu\geqslant\nu_{0}$, we have
\[
\int_{0}^{1}\left|\widehat{\chi_{B_{\rho}}}\left(\lambda,k\right)\right|^{2}d\rho\gtrsim\begin{cases}
\left\langle \lambda\nu\right\rangle ^{-Q/2+1/2}\left\langle \lambda\right\rangle ^{-2}, & \text{if }0<\lambda\leqslant\nu,\\
\lambda^{-Q-2/3}\left\langle \nu^{2/3}\left(1-\frac{\lambda}{2\nu}\right)\right\rangle ^{-1/2}, & \text{if }\nu<\lambda\leqslant2\left(\nu+\nu^{1/3}\right),\\
\lambda^{-Q-1}\left(\lambda-2\nu\right), & \text{if }2\left(\nu+\nu^{1/3}\right)<\lambda.
\end{cases}
\]
\end{prop}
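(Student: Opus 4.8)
The plan is to start from the dilation identity (\ref{eq:dilataz}) and perform the substitution $u=\rho^{2}\lambda$, which turns the left-hand side into $\frac{1}{2\lambda^{Q+1/2}}\int_{0}^{\lambda}u^{Q-1/2}\,|\widehat{\chi_{B}}(u,k)|^{2}\,du$; it then suffices to bound this last integral from below, which I would do by restricting it to a subinterval $[a,b]\subseteq(0,\lambda]$ on which one of the four asymptotics of Theorem \ref{thm:stime} is in force, chosen so that $b-a$ is at least the length required for the relevant estimates and $u$ stays comparable to $\lambda$ (or to $\nu$), so that the exponents come out right.

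On $[a,b]$ I would write $\widehat{\chi_{B}}(u,k)=\frac{\sin u}{u}(M(u)+E(u))$ with $M$ the principal term and $E$ the remainder of Theorem \ref{thm:stime}, and use $|M+E|^{2}\ge\frac12|M|^{2}-|E|^{2}$ to split the integral; since in each regime $E$ is smaller than $M$ by a fixed negative power of $\nu$, the error part is at most half the principal part once $\nu\ge\nu_{0}$ and may be discarded. In the outermost regime $2(\nu+\nu^{1/3})<\lambda$ the principal term is, up to the factor $\Omega\approx1$, a constant, so only $\sin^{2}u$ oscillates: taking $[a,b]=[2(\nu+\nu^{1/3}),\lambda]$ (length $\ge2\nu^{1/3}\ge1$) and using $u^{Q-1/2}/u^{2n+2}=u^{-1/2}$ together with $\int_{a}^{b}\sin^{2}u\,du\gtrsim b-a$, one gets $\lambda^{-Q-1/2}\int_{a}^{b}u^{-1/2}\sin^{2}u\,du\gtrsim\lambda^{-Q-1}(\lambda-2\nu)$, which is the third case.

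For $0<\lambda\le\nu$ I would take $[a,b]=[\lambda/2,\lambda]$ (shifting it inside $(0,\lambda]$, while keeping $u\approx\lambda$, when needed -- see below), substitute $x=\nu A(u/2\nu)$, and use $A(t)\approx\sqrt{t}$ to rewrite $J_{n}(\nu A(u/2\nu))^{2}/(u\nu)^{n}$ as $|x^{1/2}J_{n}(x)|^{2}$ times powers of $u\approx\lambda$ and of $\nu$, with $x$ sweeping an interval of length $\approx\sqrt{\lambda\nu}$; Lemma \ref{lem:Bessel^2}(1) then gives the lower bound whenever $\lambda\nu$ exceeds an absolute constant, while the complementary range $\lambda\nu\le C$ is immediate, since for $u\nu$ bounded Theorem \ref{thm:stime}(\ref{enu:thm a}) gives $|\widehat{\chi_{B}}(u,k)|\gtrsim1$, whence $\int_{0}^{1}|\widehat{\chi_{B_{\rho}}}|^{2}\,d\rho\gtrsim1\approx\langle\lambda\nu\rangle^{-Q/2+1/2}\langle\lambda\rangle^{-2}$. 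For $\nu<\lambda\le2(\nu+\nu^{1/3})$ I would take $[a,b]$ a subinterval of $(\nu,\lambda]$ abutting $\lambda$ and, via $x=\nu^{2/3}\Theta(u/2\nu)$, arrange that $x$ sweeps an interval of length $>1$ ending near $x_{\lambda}=\nu^{2/3}\Theta(\lambda/2\nu)$: when $x_{\lambda}$ is very negative, Lemma \ref{lem:Bessel^2}(2) supplies the factor $\langle x_{\lambda}\rangle^{-1/2}\approx\langle\nu^{2/3}(1-\lambda/2\nu)\rangle^{-1/2}$; when $x_{\lambda}=O(1)$, I would use instead the near-origin value $\mathrm{IAi}(0)=2/3$ (Lemma \ref{lem: stime IA}) on a window that in the variable $u$ has length $\approx\nu^{1/3}$. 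If $\lambda$ is so close to $\nu$ that $(\nu,\lambda]$ is shorter than $1$, I would instead run the Bessel-regime argument on $[\nu/2,\nu]$. In every case, collecting the Jacobian and the powers of $u\approx\lambda$ (respectively $u\approx\nu$) reproduces the claimed exponent.

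The hard part will be the extra factor $\sin^{2}u$: on $[a,b]$ it oscillates together with the Bessel/Airy factor, so I cannot simply restrict to where both are large, and instead must prove a decoupling estimate $\int_{a}^{b}\sin^{2}u\,g(u)\,du\gtrsim\int_{a}^{b}g(u)\,du$, where $g$ is the slowly varying squared amplitude of the Bessel/Airy factor times the remaining weight. I would establish this by writing $\sin^{2}u=\tfrac12(1-\cos2u)$, inserting the oscillatory asymptotic $J_{n}(z)^{2}=\tfrac{2}{\pi z}\cos^{2}(z-\tfrac{n\pi}{2}-\tfrac{\pi}{4})+O(z^{-2})$ (and the analogous expansion for $\mathrm{IAi}$), and bounding the cross term $\int_{a}^{b}\cos(2u)\cos^{2}(\phi(u)-\omega)w(u)\,du$ by non-stationary phase, where $\phi$ is the relevant phase and in both regimes $\phi'(u)=\tfrac14\sqrt{(2\nu-u)/u}$ -- monotone in $u$ and equal to $1$ only at $u=2\nu/17$. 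Thus in the Airy regime ($u>\nu$) there is no resonance since $\phi'<\tfrac14$, and in the Bessel regime I would choose $[a,b]$ (keeping $u\approx\lambda$) so that $\phi'$ stays bounded away from $1$ on it -- possible because the obstruction $\phi'\approx1$ is confined to the single scale $u\approx\nu$ -- and then, for $\nu_{0}$ large enough, the cross term is at most $\tfrac14\int_{a}^{b}w$, so the principal constant $\tfrac14$ is not destroyed. This uniform control of the cross term (in $\lambda,\nu,k$) is the technical core; everything else is bookkeeping of exponents and of the boundary cases.
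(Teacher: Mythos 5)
Your overall skeleton coincides with the paper's: the substitution $u=\rho^{2}\lambda$, the splitting into the three regimes of Theorem \ref{thm:stime}, the separation of main and error terms via $|M+E|^{2}\geqslant\tfrac12|M|^{2}-|E|^{2}$ (with the comparison done in the $L^{2}$-average sense, since pointwise domination fails at the zeros of $J_n$), the changes of variables $x=\nu A(u/2\nu)$ and $x=\nu^{2/3}\Theta(u/2\nu)$ feeding into Lemma \ref{lem:Bessel^2}, and the handling of the transition zones by shifting the window or reducing to an adjacent case. Where you genuinely diverge is the step you correctly identify as the core difficulty: the simultaneous oscillation of $\sin^{2}u$ and the Bessel/Airy factor. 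You propose to expand $J_n^2$ and $\mathrm{IAi}^2$ asymptotically, isolate the cross terms, and kill them by non-stationary phase after locating the unique resonance $\phi'(u)=1$ at $u=2\nu/17$ (your computation of $\phi'(u)=\tfrac14\sqrt{(2\nu-u)/u}$ in both regimes is correct, as is the observation that no resonance occurs for $u>\nu$). The paper instead uses a purely measure-theoretic device: restrict to the set $U_{\varepsilon}$ where $|\sin u|>c_{\varepsilon}$; since the Bessel/Airy amplitude is bounded above by a constant multiple of its average over the window, the excised set (of relative measure $O(\varepsilon)$) costs only an $O(\varepsilon)$ fraction of the main term, and one concludes by taking $\varepsilon$ small. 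So contrary to your worry, one \emph{can} ``simply restrict to where $\sin^2$ is large'' --- one does not need both factors large simultaneously.

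Two caveats about your route. First, the integration by parts behind your non-stationary-phase bound requires derivative control on the amplitude, which includes the factor $\Omega(\lambda,\nu)^{2}$; Theorem \ref{thm:stime} only asserts $\Omega\approx1$, so you would have to re-open its proof to extract $C^{1}$ bounds (they do hold, since $\Omega$ is explicit, but this is extra work the paper's argument avoids entirely). Second, in the Airy regime you say it suffices that $x$ sweeps ``an interval of length $>1$''; to actually obtain the factor $(-x_{\lambda})^{-1/2}$ rather than $(-x_{\lambda})^{-3/2}$ you need the interval in the $x$-variable to have length comparable to $|x_{\lambda}|$ itself (equivalently, length $\approx 2\nu-\lambda$ in $u$, which is what the paper gets by integrating over all of $[\nu,\lambda]$); an $x$-window of bounded length near $x_{\lambda}$ gives only the average $(-x_{\lambda})^{-3/2}$ of $|\mathrm{IAi}|^{2}$ and hence a weaker bound. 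Neither point is fatal, but both must be fixed for your version to close.
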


We split the proof of the above theorem in several lemmas.
\begin{lem}
\label{lem:lambda<1/nu}For $\nu$ large enough and $0<\lambda<\nu$,
we have
\begin{equation}
\int_{0}^{1}\left|\widehat{\chi_{B_{\rho}}}\left(\lambda,k\right)\right|^{2}d\rho\gtrsim\left\langle \lambda\nu\right\rangle ^{-Q/2+1/2}\left\langle \lambda\right\rangle ^{-2}.\label{eq:Lemma 20}
\end{equation}
\end{lem}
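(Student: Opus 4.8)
The plan is to start from the dilation identity \eqref{eq:dilataz}, which rewrites the quantity to be bounded as $\int_{0}^{1}\rho^{2Q}\,|\widehat{\chi_{B}}(\rho^{2}\lambda,k)|^{2}\,d\rho$. Since $\rho^{2}\lambda\leqslant\lambda<\nu$ on $(0,1)$, the argument always lies in the Bessel regime, so one may insert the asymptotic (\ref{enu:thm a}) of Theorem~\ref{thm:stime}. Using $|a+b|^{2}\geqslant\tfrac12|a|^{2}-|b|^{2}$ to split off the error, $\Omega\approx1$, and $A(t)\approx t^{1/2}$ on $[0,\tfrac12]$ (from \eqref{eq:Bound A(t)}--\eqref{eq:asympt A}), and then substituting $u=\rho^{2}\lambda$, the claim reduces to a lower bound for
\[
\frac{c}{\lambda^{Q+1/2}}\,\nu^{-n}\int_{0}^{\lambda}u^{n-1/2}\sin^{2}(u)\,J_{n}\bigl(\nu A(u/2\nu)\bigr)^{2}\,du
\]
together with the verification that the error contributes less. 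The latter is routine: substituting $w=u\nu$ and using the decay of $\langle u\nu\rangle^{-n/2+1/4}$ one sees the error is smaller than the main term by a positive power of $\nu$, hence absorbed once $\nu\geqslant\nu_{0}$ — this is where the largeness of $\nu$ is needed.

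For $\lambda$ below a fixed constant $<\pi$ one has $\sin u\approx u$ on $(0,\lambda)$, so the displayed integral is $\approx\nu^{-n}\int_{0}^{\lambda}u^{n+3/2}J_{n}(\nu A(u/2\nu))^{2}\,du$. The substitution $v=\nu A(u/2\nu)$, under which $v\approx\sqrt{\nu u}$, $u\approx v^{2}/\nu$, $du\approx(v/\nu)\,dv$ and $v$ ranges over $(0,R)$ with $R\approx\sqrt{\nu\lambda}$, turns this (up to constants) into $\nu^{-n-5/2}\int_{0}^{R}v^{2n+3}\,|v^{1/2}J_{n}(v)|^{2}\,dv$. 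If $R\gtrsim1$ one restricts to $[R/2,R]$ and applies Lemma~\ref{lem:Bessel^2}(1) to get $\gtrsim R^{2n+4}$; if $R\lesssim1$ one uses instead the small-argument asymptotic \eqref{eq: bessel picc}. Keeping track of the powers of $\nu$ and $\lambda$ and recalling $Q=2n+2$ yields \eqref{eq:Lemma 20} in this range.

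For $\lambda\gtrsim1$ the factor $\sin^{2}(\rho^{2}\lambda)$ oscillates and the argument must change. When $\lambda$ exceeds a suitable absolute constant $\lambda_{2}=\lambda_{2}(n)$, I would keep $\rho\in[\tfrac12,1]$, so that $\psi(\rho):=\nu A(\rho^{2}\lambda/2\nu)\approx\rho\sqrt{\nu\lambda}\gg1$, insert the classical asymptotic $J_{n}(z)^{2}=\tfrac{2}{\pi z}\cos^{2}\bigl(z-\tfrac{n\pi}{2}-\tfrac{\pi}{4}\bigr)+O(z^{-2})$, and expand $\sin^{2}(\rho^{2}\lambda)\cos^{2}(\psi(\rho)-\tfrac{n\pi}{2}-\tfrac{\pi}{4})$ by product-to-sum formulas. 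The non-oscillating constant $\tfrac14$ produces $\approx\int_{1/2}^{1}\psi(\rho)^{-1}\,d\rho\approx(\nu\lambda)^{-1/2}$, and restoring the prefactor $\lambda^{-2}(\lambda\nu)^{-n}$ recovers exactly $(\lambda\nu)^{-n-1/2}\lambda^{-2}\approx\langle\lambda\nu\rangle^{-Q/2+1/2}\langle\lambda\rangle^{-2}$; every remaining piece is an integral $\int_{1/2}^{1}\psi(\rho)^{-1}\cos\phi(\rho)\,d\rho$ in which, up to additive constants, $\phi$ equals $2\psi(\rho)$, $2\rho^{2}\lambda$, or $2\rho^{2}\lambda\pm2\psi(\rho)$. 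Each of these is negligible: a single integration by parts handles the phases containing $2\psi$ (whose derivative is $\gtrsim\sqrt{\nu\lambda}$), while van der Corput's lemma handles the phases $2\rho^{2}\lambda$ and $2\rho^{2}\lambda\pm2\psi$ (whose second derivative stays $\approx\lambda$), and in each case the resulting bound is $\lesssim\lambda^{-1/2}(\nu\lambda)^{-1/2}$ or smaller, hence $\ll(\nu\lambda)^{-1/2}$ once $\nu$ and $\lambda_{2}$ are large enough. For the remaining bounded range $1\leqslant\lambda\leqslant\lambda_{2}$, where the target is merely $\approx\nu^{-n-1/2}$, I would instead restrict the $\rho$-integral to a fixed subinterval on which $\sin^{2}(\rho^{2}\lambda)$ is bounded below — so that $\rho\approx1$ and $\psi(\rho)\approx\sqrt\nu$ sweeps an interval of length $\approx\sqrt\nu$ — and conclude once more via Lemma~\ref{lem:Bessel^2}(1).

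The main technical work is the oscillatory analysis in the case $\lambda\gtrsim1$: establishing the estimates for $\int_{1/2}^{1}\psi(\rho)^{-1}\cos(2\rho^{2}\lambda\pm2\psi(\rho))\,d\rho$ and for $\int_{1/2}^{1}\psi(\rho)^{-1}\cos(2\rho^{2}\lambda)\,d\rho$ with constants uniform in $\lambda$. The feature worth singling out is the near-resonance $\lambda\approx\nu$: there the cross-phase $2\rho^{2}\lambda-2\psi(\rho)$ does acquire a critical point in $[\tfrac12,1]$, but since its second derivative is $\approx\lambda\approx\nu$, van der Corput still gives a contribution of relative size $\approx\nu^{-1/2}$, which is negligible; and the point of the separate treatment of $1\leqslant\lambda\leqslant\lambda_{2}$ is precisely that the term $\int_{1/2}^{1}\psi(\rho)^{-1}\cos(2\rho^{2}\lambda)\,d\rho$ only beats the main term once $\lambda$ is larger than an absolute constant. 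Combining the three ranges $\lambda\leqslant1$, $1\leqslant\lambda\leqslant\lambda_{2}$ and $\lambda\geqslant\lambda_{2}$ gives \eqref{eq:Lemma 20}.
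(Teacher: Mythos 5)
Your proof is correct, but in the oscillatory range $\lambda\gtrsim1$ it takes a genuinely different and noticeably heavier route than the paper. The overall skeleton (dilation formula, insertion of part (1) of Theorem \ref{thm:stime}, separation of the error, change of variables reducing everything to an integral of $|x^{1/2}J_{n}(x)|^{2}$) coincides, and your treatment of $\lambda\lesssim1$ — merging the paper's two cases $0<\lambda\leqslant\nu^{-1}$ and $\nu^{-1}<\lambda\leqslant1$ via $\sin u\approx u$ and the substitution $v=\nu A(u/2\nu)$, with the dichotomy $\sqrt{\nu\lambda}\gtrless1$ — is equivalent to what the paper does. For $1<\lambda\leqslant\nu$, however, the paper never analyzes the phase of the Bessel factor: it excises an $\varepsilon$-neighbourhood of the zeros of $\sin$, bounds the excised contribution by $O(\varepsilon)$ using $|x^{1/2}J_{n}(x)|\lesssim1$, and on the remaining set applies Lemma \ref{lem:Bessel^2}(1), which already packages the only oscillation argument needed. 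You instead insert the large-argument asymptotics of $J_{n}$, expand $\sin^{2}\cdot\cos^{2}$ by product-to-sum, and dispose of each cross term by integration by parts or van der Corput. This obliges you to verify non-degeneracy of the cross phase $2\rho^{2}\lambda-2\psi(\rho)$ at the resonance $\lambda\approx\nu$; this does hold, since with $t=\rho^{2}\lambda/(2\nu)\leqslant\tfrac12$ one computes $\psi''(\rho)=-\tfrac{\lambda}{2}\sqrt{t/(1-t)}$, so the second derivative of the cross phase stays between $3\lambda$ and $5\lambda$ — but it is extra work, as is the separate compactness argument for $1\leqslant\lambda\leqslant\lambda_{2}$. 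The paper's excision trick sidesteps all phase analysis at the cost of yielding only the lower bound (which is all the lemma requires), whereas your computation would in addition produce matching asymptotics for the averaged square.
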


\begin{proof}
We consider three different cases: $0<\lambda\leqslant\nu^{-1}$,
$\nu^{-1}<\lambda\leqslant1$, and $1<\lambda\leqslant\nu$. In the
first case $\widehat{\chi_{B_{\rho}}}\left(\lambda,k\right)$ does
not oscillate, in the second the oscillations are only dues to the
Bessel function, in the last case both the Bessel and the sine factor
oscillate. Assume $0<\lambda\leqslant\nu^{-1}$. Using (\ref{eq:Bound A(t)}),
we have $\nu A\left(\frac{\lambda}{2\nu}\right)\leqslant2^{-1/2}$,
so that, by (\ref{eq: bessel picc}), 
\[
J_{n}\left(\nu A\left(\frac{\lambda}{2\nu}\right)\right)\approx\left(\nu A\left(\frac{\lambda}{2\nu}\right)\right)^{n}\approx\left(\nu\lambda\right)^{n/2}
\]
Hence, using (\ref{eq:dilataz}) and (\ref{enu:thm a}) of Theorem
\ref{thm:stime}, if $\nu$ is large enough, we get
\begin{align*}
\int_{0}^{1}\left|\widehat{\chi_{B_{\rho}}}\left(\lambda,k\right)\right|^{2}d\rho\gtrsim & \int_{0}^{1}\rho^{2Q}\left|\frac{\sin\rho^{2}\lambda}{\rho^{2}\lambda}\frac{1}{\left(\rho^{2}\lambda\nu\right)^{n/2}}J_{n}\left(\nu A\left(\frac{\rho^{2}\lambda}{2\nu}\right)\right)\Omega\left(\rho^{2}\lambda,\nu\right)\right|^{2}d\rho+O\left(\frac{1}{\nu^{4}}\int_{0}^{1}\rho^{2Q}d\rho\right)\\
\gtrsim & \int_{0}^{1}\rho^{2Q}d\rho+O\left(\frac{1}{\nu^{4}}\int_{0}^{1}\rho^{2Q}d\rho\right)\gtrsim1\approx\left\langle \lambda\nu\right\rangle ^{-Q/2+1/2}\left\langle \lambda\right\rangle ^{-2}.
\end{align*}
Assume now $\nu^{-1}\leqslant\lambda<1$, then, letting
\[
\Psi\left(\lambda,\nu\right)=\left(\lambda\nu\right)^{1/4}\frac{\sin\lambda}{\lambda}J_{n}\left(\nu A\left(\frac{\lambda}{2\nu}\right)\right)\Omega\left(\lambda,\nu\right),
\]
from (\ref{enu:thm a}) in Theorem \ref{thm:stime}, we obtain 
\begin{align*}
\widehat{\chi_{B}}\left(\lambda,k\right) & =\left(\lambda\nu\right)^{-n/2-1/4}\left[\Psi\left(\lambda,\nu\right)+O\left(\frac{1}{\nu}\left(\frac{\lambda}{\nu}\right)^{1/2}\right)\right].
\end{align*}
Hence
\begin{align*}
\int_{0}^{1}\left|\widehat{\chi_{B_{\rho}}}\left(\lambda,k\right)\right|^{2}d\rho & \gtrsim\left(\lambda\nu\right)^{-n-1/2}\left(\int_{\frac{1}{2}}^{1}\left|\Psi\left(\rho^{2}\lambda,\nu\right)\right|^{2}d\rho+O\left(\frac{1}{\nu^{3}}\right)\right).
\end{align*}
Using (\ref{eq: A'}), (\ref{eq:Bound A(t)}), and Lemma \ref{lem:Bessel^2},
we obtain 
\begin{align}
\int_{\frac{1}{2}}^{1}\left|\Psi\left(\rho^{2}\lambda,\nu\right)\right|^{2}d\rho\approx & \frac{1}{\lambda}\int_{\frac{\lambda}{4}}^{\lambda}\left|\left(u\nu\right)^{1/4}\frac{\sin u}{u}J_{n}\left(\nu A\left(\frac{u}{2\nu}\right)\right)\Omega\left(u,\nu\right)\right|^{2}du\nonumber \\
\gtrsim & \frac{1}{\lambda}\int_{\frac{\lambda}{4}}^{\lambda}\left|\left(\nu A\left(\frac{u}{2\nu}\right)\right)^{1/2}J_{n}\left(\nu A\left(\frac{u}{2\nu}\right)\right)\right|^{2}du\label{eq: stima int J_n}\\
\gtrsim & \frac{1}{\lambda}\left(\frac{\lambda}{\nu}\right)^{1/2}\int_{\frac{1}{2}\sqrt{\frac{\lambda\nu}{2}}}^{\frac{\pi}{4}\sqrt{\frac{\lambda\nu}{2}}}\left|x^{1/2}J_{n}\left(x\right)\right|^{2}dx\gtrsim1.\nonumber 
\end{align}
and (\ref{eq:Lemma 20}) follows. Let now $1<\lambda\leqslant\nu$,
and let 
\[
\Psi\left(\lambda,\nu\right)=\left(\lambda\nu\right)^{1/4}\sin\lambda J_{n}\left(\nu A\left(\frac{\lambda}{2\nu}\right)\right)\Omega\left(\lambda,\nu\right),
\]
then by (\ref{enu:thm a}) in Theorem \ref{thm:stime} we have
\begin{align*}
\widehat{\chi_{B}}\left(\lambda,k\right)=\lambda^{-1}\left(\lambda\nu\right)^{-n/2-1/4}\left[\Psi\left(\lambda,\nu\right)+O\left(\frac{1}{\nu}\left(\frac{\lambda}{\nu}\right)^{1/2}\right)\right].
\end{align*}
Hence
\begin{align*}
\int_{0}^{1}\left|\widehat{\chi_{B_{\rho}}}\left(\lambda,k\right)\right|^{2}d\rho\geqslant & \int_{\frac{1}{\sqrt{2}}}^{1}\rho^{2Q}\left|\left(\rho^{2}\lambda\right)^{-1}\left(\rho^{2}\lambda\nu\right)^{-n/2-1/4}\left[\Psi\left(\rho^{2}\lambda,\nu\right)+O\left(\frac{1}{\nu}\left(\frac{\rho^{2}\lambda}{\nu}\right)^{1/2}\right)\right]\right|^{2}d\rho\\
\gtrsim & \lambda^{-2}\left(\lambda\nu\right)^{-n-1/2}\left[\int_{\frac{1}{\sqrt{2}}}^{1}\left|\Psi\left(\rho^{2}\lambda,\nu\right)\right|^{2}d\rho+O\left(\frac{1}{\nu^{2}}\right)\right].
\end{align*}
Using $\Omega\approx1$ and (\ref{eq:Bound A(t)}), we see that
\begin{align*}
\int_{\frac{1}{\sqrt{2}}}^{1}\left|\Psi\left(\rho^{2}\lambda,\nu\right)\right|^{2}d\rho\approx & \frac{1}{\lambda}\int_{\frac{\lambda}{2}}^{\lambda}\left|\Psi\left(u,\nu\right)\right|^{2}du\gtrsim\frac{1}{\lambda}\int_{\frac{\lambda}{2}}^{\lambda}\left|\sin\left(u\right)\left(\nu A\left(\frac{u}{2\nu}\right)\right)^{1/2}J_{n}\left(\nu A\left(\frac{u}{2\nu}\right)\right)\right|^{2}du.
\end{align*}
Let $\varepsilon>0$ and let $U_{\varepsilon}=\left\{ t\in\mathbb{R}\colon d\left(t,\pi\mathbb{Z}\right)>\varepsilon\right\} $.
Since $\left|\sin\left(u\right)\right|>c_{\varepsilon}$ if $u\in U_{\varepsilon}$,
and $t^{1/2}J_{n}\left(t\right)\lesssim1$, we have
\begin{align*}
\int_{\frac{1}{\sqrt{2}}}^{1}\left|\Psi\left(\rho^{2}\lambda,\nu\right)\right|^{2}d\rho\gtrsim & \frac{1}{\lambda}\left[\int_{\frac{\lambda}{2}}^{\lambda}\left|\left(\nu A\left(\frac{u}{2\nu}\right)\right)^{1/2}J_{n}\left(\nu A\left(\frac{u}{2\nu}\right)\right)\right|^{2}du+O\left(\left|\left[\frac{\lambda}{2},\lambda\right]\setminus U_{\varepsilon}\right|\right)\right]\\
= & \frac{1}{\lambda}\int_{\frac{\lambda}{2}}^{\lambda}\left|\left(\nu A\left(\frac{u}{2\nu}\right)\right)^{1/2}J_{n}\left(\nu A\left(\frac{u}{2\nu}\right)\right)\right|^{2}du+O\left(\varepsilon\right).
\end{align*}
For $\varepsilon$ small enough, the same computation as in (\ref{eq: stima int J_n})
gives 
\[
\int_{\frac{1}{\sqrt{2}}}^{1}\left|\Psi\left(\rho^{2}\lambda,\nu\right)\right|^{2}d\rho\gtrsim1+O\left(\varepsilon\right)>c.
\]
\end{proof}
\begin{lem}
Let $\nu<\lambda\leqslant2\left(\nu+2\nu^{1/3}\right)$ with $\nu$
large enough, then
\begin{equation}
\int_{0}^{1}\left|\widehat{\chi_{B_{\rho}}}\left(\lambda,k\right)\right|^{2}d\rho\gtrsim\lambda^{-Q-2/3}\left\langle \nu^{2/3}\left(1-\frac{\lambda}{2\nu}\right)\right\rangle ^{-1/2}.\label{eq: Lemma 21}
\end{equation}
\end{lem}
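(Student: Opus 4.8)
The plan is to follow the scheme used in the proof of Lemma~\ref{lem:lambda<1/nu}. By the dilation identity \eqref{eq:dilataz} (which rests on the Fourier dilation formula \eqref{eq:Dilatazioni fourier}) and the substitution $u=\rho^{2}\lambda$,
\[
\int_{0}^{1}\bigl|\widehat{\chi_{B_{\rho}}}(\lambda,k)\bigr|^{2}\,d\rho=\frac{1}{2\lambda^{Q+1/2}}\int_{0}^{\lambda}u^{Q-1/2}\,\bigl|\widehat{\chi_{B}}(u,k)\bigr|^{2}\,du ,
\]
so it suffices to bound the right-hand integral from below. For this one restricts the $u$-integration to a subinterval on which a single estimate of Theorem~\ref{thm:stime} is available, discards the (lower-order) error terms, handles the factor $\sin^{2}u$ — which here genuinely oscillates, since $\lambda>\nu\gg1$ — by the $U_{\varepsilon}=\{t:\mathrm{dist}(t,\pi\mathbb{Z})>\varepsilon\}$ device of Lemma~\ref{lem:lambda<1/nu}, and finally invokes Lemma~\ref{lem:Bessel^2}. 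Since $\lambda\in(\nu,3\nu]$ one always has $\lambda\approx\nu$, hence $\lambda^{Q+1/2}\approx\nu^{Q+1/2}$.

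\emph{First case: $\nu<\lambda\leqslant(2-c_{0})\nu$ with $c_{0}\in(0,\tfrac12)$ fixed.} Here $\langle\nu^{2/3}(1-\tfrac{\lambda}{2\nu})\rangle\approx\nu^{2/3}$, so the claim reads $\int_{0}^{1}|\widehat{\chi_{B_{\rho}}}|^{2}\,d\rho\gtrsim\nu^{-Q-1}$. I restrict the $u$-integral to $(\nu/2,\nu)$ and apply the Bessel-regime estimate \eqref{enu:thm a} of Theorem~\ref{thm:stime}: there $u\nu\approx\nu^{2}$ and $\nu A(\tfrac{u}{2\nu})\approx\nu$, so $u^{Q-1/2}|\widehat{\chi_{B}}(u,k)|^{2}\gtrsim\nu^{-1/2}\sin^{2}u\,|J_{n}(\nu A(\tfrac{u}{2\nu}))|^{2}$ up to an error that integrates to $O(\nu^{-5/2})$. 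Removing $\sin^{2}u$ via $U_{\varepsilon}$ (using $(\nu A)^{1/2}|J_{n}(\nu A)|\lesssim1$, so that $\mathbb{R}\setminus U_{\varepsilon}$ costs only an $O(\varepsilon)$ fraction) and then changing variables to $x=\nu A(\tfrac{u}{2\nu})$ — for which $dx\approx du$ by \eqref{eq: A'} on $[\tfrac14,\tfrac12]$ — the first estimate of Lemma~\ref{lem:Bessel^2}, over an $x$-interval of length $\approx\nu$, gives $\int_{\nu/2}^{\nu}\sin^{2}u\,|J_{n}(\nu A(\tfrac{u}{2\nu}))|^{2}\,du\gtrsim1$, hence $\int_{\nu/2}^{\nu}u^{Q-1/2}|\widehat{\chi_{B}}|^{2}\,du\gtrsim\nu^{-1/2}$ and $\int_{0}^{1}|\widehat{\chi_{B_{\rho}}}|^{2}\,d\rho\gtrsim\nu^{-Q-1}$.

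\emph{Second case: $(2-c_{0})\nu<\lambda\leqslant2(\nu+2\nu^{1/3})$.} Now $\lambda$ is bounded away from $\nu$, and I use the Airy-regime estimate \eqref{enu:thm b} of Theorem~\ref{thm:stime} on $[u_{-},u_{+}]$ with $u_{-}=\tfrac32\nu$ and $u_{+}=\min\{\lambda,\,2\nu-C_{2}\nu^{1/3}\}$, where $C_{2}$ is a large constant; this interval lies in $(\nu,2\nu)$, inside the range of \eqref{enu:thm b}. On it $u\approx\nu$, and with $x=\nu^{2/3}\Theta(\tfrac{u}{2\nu})$ one has $u^{Q-1/2}|\widehat{\chi_{B}}(u,k)|^{2}\gtrsim\nu^{-1/2}\sin^{2}u\,|\mathrm{IAi}(\nu^{2/3}\Theta(\tfrac{u}{2\nu}))|^{2}$ up to an error that integrates to $O(\nu^{-3/2})$ (using $\int\langle\nu^{2/3}(1-\tfrac{u}{2\nu})\rangle^{-5/2}\,du\lesssim\nu^{1/3}$). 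Taking $C_{2}$ large makes $x_{+}:=\nu^{2/3}\Theta(\tfrac{u_{+}}{2\nu})<-\tilde c$, and by \eqref{eq:That t->1} together with $\Theta'>0$ (Lemma~\ref{lem: A beta alpha}) one checks $\langle x_{+}\rangle\approx\langle\nu^{2/3}(1-\tfrac{\lambda}{2\nu})\rangle$ in both sub-cases $u_{+}=\lambda$ and $u_{+}=2\nu-C_{2}\nu^{1/3}$, while $x_{+}-x_{-}\gtrsim\nu^{2/3}\gg1$. After removing $\sin^{2}u$, the substitution $du\approx\nu^{1/3}\,dx$ (legitimate since $\Theta'\approx1$ on $[\tfrac34,1]$) and the second estimate of Lemma~\ref{lem:Bessel^2} give $\int_{u_{-}}^{u_{+}}|\mathrm{IAi}(\nu^{2/3}\Theta(\tfrac{u}{2\nu}))|^{2}\,du\gtrsim\nu^{1/3}(-x_{+})^{-1/2}$, whence $\int_{u_{-}}^{u_{+}}u^{Q-1/2}|\widehat{\chi_{B}}|^{2}\,du\gtrsim\nu^{-1/6}(-x_{+})^{-1/2}$ (the $O(\nu^{-3/2})$ error being negligible, since $(-x_{+})^{-1/2}\gtrsim\nu^{-1/3}$), and dividing by $\lambda^{Q+1/2}\approx\nu^{Q+1/2}$ yields exactly $\lambda^{-Q-2/3}\langle\nu^{2/3}(1-\tfrac{\lambda}{2\nu})\rangle^{-1/2}$.

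I expect the main obstacle to be reconciling the two oscillations: since $\lambda>\nu\gg1$ the sine and the Bessel/Airy factors oscillate on comparable scales, so the clean "$(\nu A)^{1/2}|J_{n}(\nu A)|\lesssim1$" bookkeeping is not by itself enough for $\int\sin^{2}u\,|\mathrm{IAi}(\nu^{2/3}\Theta(\tfrac{u}{2\nu}))|^{2}\,du$. I would instead majorize $|\mathrm{IAi}(\nu^{2/3}\Theta(\tfrac{u}{2\nu}))|^{2}$ by the smooth, slowly varying envelope $C\min\{1,(-\nu^{2/3}\Theta(\tfrac{u}{2\nu}))^{-3/2}\}$ coming from Lemma~\ref{lem: stime IA}; this envelope changes by a bounded factor over any interval of length $\pi$, while $\mathbb{R}\setminus U_{\varepsilon}$ has relative density $O(\varepsilon)$ there, and its integral over $[u_{-},u_{+}]$ is in turn $\lesssim\int_{u_{-}}^{u_{+}}|\mathrm{IAi}(\nu^{2/3}\Theta)|^{2}\,du$ by the second estimate of Lemma~\ref{lem:Bessel^2} again. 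Combining these gives $\int_{[u_{-},u_{+}]\setminus U_{\varepsilon}}|\mathrm{IAi}(\nu^{2/3}\Theta)|^{2}\,du\lesssim\varepsilon\int_{u_{-}}^{u_{+}}|\mathrm{IAi}(\nu^{2/3}\Theta)|^{2}\,du$, so a fixed small $\varepsilon$ retains a definite fraction of the main term and the resulting $c_{\varepsilon}>0$ does not depend on $\nu$. The remaining delicate point is purely organizational: the integration interval must simultaneously lie in the correct regime of Theorem~\ref{thm:stime} and be long enough — in the Airy variable — for Lemma~\ref{lem:Bessel^2} to apply, which is exactly what forces resorting to the Bessel estimate on $(\nu/2,\nu)$ when $\lambda\leqslant(2-c_{0})\nu$ (the interval $[u_{-},\lambda]$ would be too short in the Airy variable when $\lambda$ is close to $\nu$) and truncating $u_{+}$ at $2\nu-C_{2}\nu^{1/3}$ near the turning point $2\nu$.
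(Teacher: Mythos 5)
Your proposal is correct and follows essentially the same route as the paper: the same dilation identity to pass to an integral in $u=\rho^{2}\lambda$, a case split between a Bessel-regime range near $\lambda\approx\nu$ and an $\mathrm{IAi}$/Airy-regime range near $\lambda\approx2\nu$ with a truncation of the integration interval to stay away from the turning point, removal of $\sin^{2}u$ via the $U_{\varepsilon}$ device, and Lemma~\ref{lem:Bessel^2} for the lower bounds. The only (immaterial) differences are that the paper handles the range $\lambda\leqslant\tfrac32\nu$ and the turning-point range by rescaling $\rho$ so as to quote the previous lemma and the middle case directly, and bounds the $U_{\varepsilon}^{c}$ contribution by an explicit interval decomposition rather than your envelope argument.
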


\begin{proof}
Assume first $\nu<\lambda\leqslant\frac{3}{2}\nu$. By the previous
Lemma, we have 
\begin{align*}
\int_{0}^{1}\left|\widehat{\chi_{B_{\rho}}}\left(\lambda,k\right)\right|^{2}d\rho\geqslant & \int_{0}^{\sqrt{\frac{\nu}{\lambda}}}\rho^{2Q}\left|\widehat{\chi_{B}}\left(\rho^{2}\lambda,k\right)\right|^{2}d\rho\\
= & \left(\frac{\nu}{\lambda}\right)^{Q+1/2}\int_{0}^{1}\rho^{2Q}\left|\widehat{\chi_{B}}\left(\nu\rho^{2},k\right)\right|^{2}d\rho\\
\gtrsim & \nu^{-Q-1}\approx\lambda^{-Q-2/3}\left\langle \nu^{2/3}\left(1-\frac{\lambda}{2\nu}\right)\right\rangle ^{-1/2}.
\end{align*}
Let now $\frac{3}{2}\nu\leqslant\lambda\leqslant2\left(\nu-\nu^{1/3}\right)$,
by (\ref{enu:thm b}) in Theorem \ref{thm:stime}, we have
\[
\widehat{\chi_{B}}\left(\lambda,k\right)=\frac{1}{\lambda}\frac{1}{\left(\lambda\nu\right)^{n/2}}\left(\nu^{2/3}\left(1-\frac{\lambda}{2\nu}\right)\right)^{-3/4}\left[\Psi\left(\lambda,\nu\right)+O\left(\nu^{-1}\left(1-\frac{\lambda}{2\nu}\right)^{-1/2}\right)\right]
\]
where we set
\begin{align*}
\Psi\left(\lambda,\nu\right)= & \left(-1\right)^{k}\sin\left(\lambda\right)\Omega\left(\lambda,\nu\right)\left(\nu^{2/3}\left(1-\frac{\lambda}{2\nu}\right)\right)^{3/4}\mathrm{IAi}\left(\nu^{2/3}\Theta\left(\frac{\lambda}{2\nu}\right)\right)
\end{align*}
Hence
\begin{align*}
\int_{0}^{1}\left|\widehat{\chi_{B_{\rho}}}\left(\lambda,k\right)\right|^{2}d\rho\gtrsim & \lambda^{-Q-1}\left[\int_{\sqrt{\frac{\nu}{\lambda}}}^{1}\left(1-\frac{\rho^{2}\lambda}{2\nu}\right)^{-3/2}\left|\Psi\left(\rho^{2}\lambda,\nu\right)\right|^{2}d\rho+O\left(\frac{1}{\nu^{2}}\int_{\sqrt{\frac{\nu}{\lambda}}}^{1}\left(1-\frac{\rho^{2}\lambda}{2\nu}\right)^{-5/2}d\rho\right)\right]\\
= & \lambda^{-Q-1}\int_{\sqrt{\frac{\nu}{\lambda}}}^{1}\left(1-\frac{\rho^{2}\lambda}{2\nu}\right)^{-3/2}\left|\Psi\left(\rho^{2}\lambda,\nu\right)\right|^{2}d\rho+O\left(\lambda^{-Q-2}\right).
\end{align*}
It remains to check that 
\[
\int_{\sqrt{\frac{\nu}{\lambda}}}^{1}\left(1-\frac{\rho^{2}\lambda}{2\nu}\right)^{-3/2}\left|\Psi\left(\rho^{2}\lambda,\nu\right)\right|^{2}d\rho\gtrsim\left(1-\frac{\lambda}{2\nu}\right)^{-1/2}.
\]
Let $\varepsilon>0$ and let $U_{\varepsilon}=\left\{ u\in\mathbb{R}\colon d\left(u,\pi\mathbb{Z}\right)>\varepsilon\right\} $,
then
\begin{align*}
 & \int_{\sqrt{\frac{\nu}{\lambda}}}^{1}\left(1-\frac{\rho^{2}\lambda}{2\nu}\right)^{-3/2}\left|\Psi\left(\rho^{2}\lambda,\nu\right)\right|^{2}d\rho\gtrsim\nu\int_{\sqrt{\frac{\nu}{\lambda}}}^{1}\left|\sin\left(\rho^{2}\lambda\right)\mathrm{IAi}\left(\nu^{2/3}\Theta\left(\frac{\rho^{2}\lambda}{2\nu}\right)\right)\right|^{2}d\rho\\
\gtrsim & \int_{\nu}^{\lambda}\left|\sin u\right|^{2}\left|\mathrm{IAi}\left(\nu^{2/3}\Theta\left(\frac{u}{2\nu}\right)\right)\right|^{2}du\\
\gtrsim & \int_{\nu}^{\lambda}\left|\mathrm{IAi}\left(\nu^{2/3}\Theta\left(\frac{u}{2\nu}\right)\right)\right|^{2}du-\int_{\left[\nu,\lambda\right]\cap U_{\varepsilon}^{c}}\left|\mathrm{IAi}\left(\nu^{2/3}\Theta\left(\frac{u}{2\nu}\right)\right)\right|^{2}du=S_{1}-S_{2}.
\end{align*}
First notice that, by Lemma \ref{lem: stime IA} and Lemma \ref{lem: A beta alpha},
we have that
\begin{align*}
S_{2} & \lesssim\int_{\left[\nu,\lambda\right]\cap U_{\varepsilon}^{c}}\left(-\nu^{2/3}\Theta\left(\frac{u}{2\nu}\right)\right)^{-3/2}du\lesssim\frac{1}{\nu}\int_{\left[\nu,\lambda\right]\cap U_{\varepsilon}^{c}}\left(1-\frac{u}{2\nu}\right)^{-3/2}du\lesssim\int_{\left[\frac{1}{2},\frac{\lambda}{2\nu}\right]\cap\frac{U_{\varepsilon}^{c}}{2\nu}}\left(1-x\right)^{-3/2}dx.
\end{align*}
Decomposing the domain of integration in the last integral as
\[
\left[\frac{1}{2},\frac{\lambda}{2\nu}\right]\cap\frac{U_{\varepsilon}^{c}}{2\nu}\subseteq\bigsqcup_{k=\left\lfloor \frac{\nu}{\pi}\right\rfloor }^{\left\lfloor \frac{\lambda+\varepsilon}{\pi}\right\rfloor }\left(\frac{\pi k}{2\nu}-\frac{\varepsilon}{2\nu},\frac{\pi k}{2\nu}+\frac{\varepsilon}{2\nu}\right),
\]
it is not difficult to check that $S_{2}\lesssim\varepsilon\left(1-\frac{\lambda}{2\nu}\right)^{-1/2}$.
By Lemma \ref{lem: A beta alpha},
\[
\frac{d}{du}\left[\nu^{2/3}\Theta\left(\frac{u}{2\nu}\right)\right]=\frac{1}{2}\nu^{-1/3}\Theta'\left(\frac{u}{2\nu}\right)\approx\nu^{-1/3},
\]
so that, by Lemma \ref{lem:Bessel^2}, we have
\begin{align*}
S_{1}=\int_{\nu}^{\lambda}\left|\mathrm{IAi}\left(\nu^{2/3}\Theta\left(\frac{u}{2\nu}\right)\right)\right|^{2}du & \approx\nu^{1/3}\int_{\nu^{2/3}\Theta\left(\frac{1}{2}\right)}^{\nu^{2/3}\Theta\left(\frac{\lambda}{2\nu}\right)}\left|\mathrm{IAi}\left(x\right)\right|^{2}du\gtrsim\left(-\Theta\left(\frac{\lambda}{2\nu}\right)\right)^{-1/2}\gtrsim\left(1-\frac{\lambda}{2\nu}\right)^{-1/2}.
\end{align*}
Collecting the above estimates, we obtain
\[
\int_{\sqrt{\frac{\nu}{\lambda}}}^{1}\left(1-\frac{\rho^{2}\lambda}{2\nu}\right)^{-3/2}\left|\Psi\left(\rho^{2}\lambda,\nu\right)\right|^{2}d\rho\gtrsim\left(1-\frac{\lambda}{2\nu}\right)^{-1/2}-\varepsilon\left(1-\frac{\lambda}{2\nu}\right)^{-1/2}\gtrsim\left(1-\frac{\lambda}{2\nu}\right)^{-1/2},
\]
for $\varepsilon$ small enough. Finally, let $2\left(\nu-\nu^{1/3}\right)<\lambda\leqslant2\left(\nu+2\nu^{1/3}\right)$.
We have
\begin{align*}
\int_{0}^{1}\left|\widehat{\chi_{B_{\rho}}}\left(\lambda,k\right)\right|^{2}d\rho & \geqslant\int_{0}^{\sqrt{\frac{2\left(\nu-\nu^{1/3}\right)}{\lambda}}}\rho^{2Q}\left|\widehat{\chi_{B}}\left(\rho^{2}\lambda,k\right)\right|^{2}d\rho\\
 & \gtrsim\int_{0}^{1}u^{2Q}\left|\widehat{\chi_{B}}\left(u^{2}2\left(\nu-\nu^{1/3}\right),k\right)\right|^{2}du\gtrsim\lambda^{-Q-2/3},
\end{align*}
which, under the above assumption on $\lambda$, coincides with the
RHS of (\ref{eq: Lemma 21}).
\end{proof}
\begin{lem}
For $\nu$ large enough, if $\lambda>2\left(\nu+\nu^{1/3}\right)$,
then we have
\[
\int_{0}^{1}\left|\widehat{\chi_{B_{\rho}}}\left(\lambda,k\right)\right|^{2}d\rho\gtrsim\lambda^{-Q-1}\left(\lambda-2\nu\right).
\]
\end{lem}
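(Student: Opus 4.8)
The plan is to peel off a thin band near the threshold $\lambda=2(\nu+\nu^{1/3})$, where the desired bound already follows from the preceding lemma, and then to handle the bulk by a change of variables. First, suppose $2(\nu+\nu^{1/3})<\lambda\le 2(\nu+2\nu^{1/3})$. In this range $1-\tfrac{\lambda}{2\nu}$ has order $-\nu^{-2/3}$, so $\langle\nu^{2/3}(1-\tfrac{\lambda}{2\nu})\rangle\approx 1$ and the preceding lemma gives $\int_0^1|\widehat{\chi_{B_\rho}}(\lambda,k)|^2\,d\rho\gtrsim\lambda^{-Q-2/3}$. Since moreover $\lambda\approx\nu$ and $\lambda-2\nu\approx\nu^{1/3}\approx\lambda^{1/3}$ throughout this band, we have $\lambda^{-Q-2/3}\approx\lambda^{-Q-1}(\lambda-2\nu)$, which is the asserted estimate. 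Hence from now on we may assume $\lambda>2(\nu+2\nu^{1/3})$; then $\lambda-2(\nu+\nu^{1/3})=(\lambda-2\nu)-2\nu^{1/3}\ge\tfrac12(\lambda-2\nu)$ and, for $\nu$ large, $\lambda-2(\nu+\nu^{1/3})>2\nu^{1/3}\ge 2$.

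Next I would use the dilation identity \eqref{eq:dilataz} and retain only the portion of the $\rho$-integral on which $\rho^2\lambda$ lies above the threshold $2(\nu+\nu^{1/3})$. Since $2(\nu+\nu^{1/3})<\lambda$, the substitution $u=\rho^2\lambda$ on $[\sqrt{2(\nu+\nu^{1/3})/\lambda},\,1]$ is admissible and yields
\[
\int_0^1|\widehat{\chi_{B_\rho}}(\lambda,k)|^2\,d\rho\ \ge\ \frac{1}{2\lambda^{Q+1/2}}\int_{2(\nu+\nu^{1/3})}^{\lambda}u^{Q-1/2}\,|\widehat{\chi_B}(u,k)|^2\,du .
\]
On $(2(\nu+\nu^{1/3}),3\nu]$ we then invoke (\ref{enu:thm c}) of Theorem \ref{thm:stime}, and on $(3\nu,\infty)$ we invoke (\ref{enu:thm d}). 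In both regimes $\widehat{\chi_B}(u,k)$ equals $(-1)^k\tfrac{\sin u}{u^{n+1}}$ times a factor which is $\approx 1$: this factor is $\Omega(u,\nu)+O(\nu^{-2/3})$ with $\Omega\approx 1$ in the first regime, and $2^{(3n+1)/2}\bigl(1+O(\nu^{-(n-1)/2}u^{n/2-1}e^{-cu})\bigr)$ in the second, the error being uniformly negligible as $\nu\to\infty$. Consequently, for $\nu$ large, $|\widehat{\chi_B}(u,k)|^2\gtrsim \sin^2 u\,/\,u^{2n+2}=\sin^2 u\,/\,u^{Q}$ on all of $[2(\nu+\nu^{1/3}),\lambda]$.

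Substituting this lower bound and using $u\le\lambda$ to estimate $u^{-1/2}\ge\lambda^{-1/2}$ gives
\[
\int_0^1|\widehat{\chi_{B_\rho}}(\lambda,k)|^2\,d\rho\ \gtrsim\ \frac{1}{\lambda^{Q+1/2}}\int_{2(\nu+\nu^{1/3})}^{\lambda}u^{-1/2}\sin^2 u\,du\ \ge\ \frac{1}{\lambda^{Q+1}}\int_{2(\nu+\nu^{1/3})}^{\lambda}\sin^2 u\,du .
\]
Since $\int_a^b\sin^2 u\,du=\tfrac12(b-a)-\tfrac14(\sin 2b-\sin 2a)\ge\tfrac14(b-a)$ whenever $b-a\ge 2$, and here $\lambda-2(\nu+\nu^{1/3})\ge 2$ with $\lambda-2(\nu+\nu^{1/3})\ge\tfrac12(\lambda-2\nu)$, the last quantity is $\gtrsim(\lambda-2\nu)/\lambda^{Q+1}$, which is exactly the claim. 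The only delicate point is the matching at $\lambda\approx 2(\nu+\nu^{1/3})$, which is why the band $\lambda\le 2(\nu+2\nu^{1/3})$ is disposed of separately using the preceding lemma; once $\lambda$ is separated from the threshold by a multiple of $\nu^{1/3}$, the argument reduces to the routine change of variables together with the already-established pointwise asymptotics, and I do not anticipate a genuine obstacle.
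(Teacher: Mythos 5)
Your proof is correct and follows essentially the same route as the paper's: restrict the $\rho$-integral to the range where $\rho^{2}\lambda$ stays above the Airy threshold, change variables to $u=\rho^{2}\lambda$, invoke parts (\ref{enu:thm c}) and (\ref{enu:thm d}) of Theorem \ref{thm:stime} to reduce matters to a lower bound for $\int\sin^{2}u\,du$, and conclude. Your preliminary splitting-off of the band $2(\nu+\nu^{1/3})<\lambda\leqslant 2(\nu+2\nu^{1/3})$ via the preceding lemma is a worthwhile refinement rather than a redundancy: the paper's final inequality $\int_{2(\nu+\nu^{1/3})}^{\lambda}\sin^{2}(u)\,u^{-1/2}du\gtrsim\lambda^{-1/2}(\lambda-2\nu)$ is not justified when $\lambda$ is very close to the left endpoint (the integration interval can be arbitrarily short while $\lambda-2\nu\approx\nu^{1/3}$), and your use of the overlap with the previous lemma's range is exactly what is needed to cover that case.
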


\begin{proof}
Since for $\lambda>3\nu$ we have $O\left(\nu^{-\frac{n-1}{2}}\lambda^{n/2-1}e^{-c\lambda}\right)=O\left(\nu^{-\frac{2}{3}}\right)$,
we can combine parts (\ref{enu:thm c}) and (\ref{enu:thm d}) of
Theorem \ref{thm:stime}. As a result, for the whole range $\lambda>2\left(\nu+\nu^{1/3}\right)$,
\[
\widehat{\chi_{B}}\left(\lambda,k\right)=\left(-1\right)^{k}\frac{\sin\lambda}{\lambda^{n+1}}\left(\Omega\left(\lambda,\nu\right)+O\left(\frac{1}{\nu^{2/3}}\right)\right),
\]
for a suitable $\Omega\left(\lambda,\nu\right)\approx1$. Hence
\begin{align*}
\int_{0}^{1}\left|\widehat{\chi_{B_{\rho}}}\left(\lambda,k\right)\right|^{2}d\rho\geqslant & \int_{\sqrt{\frac{2\left(\nu+\nu^{1/3}\right)}{\lambda}}}^{1}\rho^{2Q}\left|\widehat{\chi_{B}}\left(\rho^{2}\lambda,k\right)\right|^{2}d\rho\\
\gtrsim & \frac{1}{\lambda^{Q}}\int_{\sqrt{\frac{2\left(\nu+\nu^{1/3}\right)}{\lambda}}}^{1}\left|\sin\left(\rho^{2}\lambda\right)\right|^{2}d\rho+O\left(\frac{1}{\lambda^{Q}}\frac{1}{\nu^{4/3}}\right)\\
\gtrsim & \frac{1}{\lambda^{Q+1/2}}\int_{2\left(\nu+\nu^{1/3}\right)}^{\lambda}\left|\sin\left(u\right)\right|^{2}\frac{1}{\sqrt{u}}du+O\left(\frac{1}{\lambda^{Q}}\frac{1}{\nu^{4/3}}\right)\\
\gtrsim & \frac{1}{\lambda^{Q}}\left(1-\frac{2\nu}{\lambda}\right)+O\left(\frac{1}{\lambda^{Q}}\frac{1}{\nu^{4/3}}\right)\gtrsim\lambda^{-Q-1}\left(\lambda-2\nu\right),
\end{align*}
for $\nu$ large enough.
\end{proof}

\subsection{Estimate for $\mathcal{I}\left(\Lambda,s\right)$.}

In this subsection, we restrict the lower bounds obtained in the previous
section to the region $F_{\Lambda}$. This allows us to derive the
lower estimate for the term $\mathcal{I}\left(\Lambda,s\right)$ appearing
in equation (\ref{eq:I x J}).
\begin{thm}
\label{thm: stima I}Let $0<s<1$ and let $\Lambda>0$ be such that
$s\Lambda>Q-1$. We have
\[
\mathcal{I}\left(\Lambda,s\right)=\inf_{\left(k,\lambda\right)\in F_{\Lambda}}e^{\frac{1}{2}\nu\left|\lambda\right|s}\int_{0}^{1}\left|\widehat{\chi_{B_{\rho}}}\left(\lambda,k\right)\right|^{2}d\rho\gtrsim s^{\frac{Q-1}{2}}.
\]
\end{thm}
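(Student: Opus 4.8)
The plan is to combine the pointwise lower bounds for the $\rho$-averaged coefficients collected in Proposition~\ref{prop: stima media-1} with an elementary one-variable optimization that balances the exponential weight $e^{\frac12\nu|\lambda|s}$ against the polynomial decay in $\langle\lambda\nu\rangle$. First I would note that the quantity defining $\mathcal I$ depends on $\lambda$ only through $|\lambda|$ (by the evenness of $\widehat{\chi_B}(\cdot,k)$, recalled in the proof of Theorem~\ref{thm:stime}, together with \eqref{eq: chi B}), and that $F_\Lambda$ is symmetric in $\lambda$, so we may restrict the infimum to $\lambda>0$. On $F_\Lambda$ we have $|\lambda|\leqslant 1$, and since $\nu=\nu_k=4k+2n\geqslant 2$, every such point satisfies $0<\lambda\leqslant\nu$; moreover $\langle\lambda\rangle\approx 1$. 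Hence, for $\nu\geqslant\nu_0$, the first case of Proposition~\ref{prop: stima media-1} applies and gives
\[
e^{\frac12\nu\lambda s}\int_0^1\bigl|\widehat{\chi_{B_\rho}}(\lambda,k)\bigr|^2\,d\rho\ \gtrsim\ e^{\frac{s}{2}\lambda\nu}\,\langle\lambda\nu\rangle^{-(Q-1)/2}\ =:\ h(\lambda\nu),
\]
so that it suffices to bound $h(x)=e^{sx/2}\langle x\rangle^{-(Q-1)/2}$ from below by $c_Q\,s^{(Q-1)/2}$ for all $x>0$, with $c_Q>0$ depending only on $Q$ (equivalently on $n$).

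For the optimization I would split at $x=1/s$. On $(0,1/s]$ one has $e^{sx/2}\geqslant 1$ and, since $s<1$ forces $1\leqslant 1/s^2$ and hence $\langle x\rangle^2\leqslant 2/s^2$, we get $\langle x\rangle^{-(Q-1)/2}\geqslant 2^{-(Q-1)/4}s^{(Q-1)/2}$, so $h(x)\geqslant 2^{-(Q-1)/4}s^{(Q-1)/2}$ there. On $(1/s,\infty)$ we have $\langle x\rangle^2\leqslant 2x^2$, hence $h(x)\geqslant 2^{-(Q-1)/4}e^{sx/2}x^{-(Q-1)/2}$; the function $x\mapsto e^{sx/2}x^{-(Q-1)/2}$ is decreasing on $(1/s,(Q-1)/s)$ and increasing afterwards, so its minimum on $(1/s,\infty)$ is attained at $x=(Q-1)/s$ and equals $(e/(Q-1))^{(Q-1)/2}s^{(Q-1)/2}$. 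Combining the two ranges yields $\inf_{x>0}h(x)\geqslant c_Q\,s^{(Q-1)/2}$, which is the desired bound on the part of $F_\Lambda$ with $\nu\geqslant\nu_0$. The hypothesis $s\Lambda>Q-1$ is precisely what places the minimizing value $x\approx(Q-1)/s$ inside the admissible range $\langle\lambda\nu\rangle\leqslant\Lambda$, although for the present lower bound one only uses that $h\gtrsim s^{(Q-1)/2}$ on all of $(0,\infty)$.

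It remains to treat the finitely many $k$ with $\nu_k<\nu_0$. For these, $F_\Lambda$ confines $\lambda$ to a fixed compact subinterval of $[0,1]$, and $\lambda\mapsto\int_0^1|\widehat{\chi_{B_\rho}}(\lambda,k)|^2\,d\rho=\int_0^1\rho^{2Q}|\widehat{\chi_B}(\rho^2\lambda,k)|^2\,d\rho$ extends continuously to $[0,1]$ and is strictly positive there: from \eqref{eq: chi B} and $L_k^{n-1}(0)\neq 0$ one checks $\widehat{\chi_B}(\cdot,k)$ extends continuously to $\lambda=0$ with $\widehat{\chi_B}(0,k)\neq 0$, so the integrand is positive for $\rho$ near $0$. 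A compactness argument over these finitely many $k$ then gives a positive constant $c_0$ with $\int_0^1|\widehat{\chi_{B_\rho}}(\lambda,k)|^2\,d\rho\geqslant c_0$; since $e^{\frac12\nu|\lambda|s}\geqslant1$ and $s^{(Q-1)/2}\leqslant1$ for $0<s<1$, this contributes $\geqslant c_0\geqslant c_0\,s^{(Q-1)/2}$, and taking the smaller of the two constants finishes the proof. I do not expect a genuine obstacle here, since the averaged pointwise estimates of the previous subsections have already done the hard work; the only delicate point is the two-regime analysis of $h$, i.e.\ correctly locating where the exponential gain $e^{sx/2}$ overtakes the polynomial loss $\langle x\rangle^{-(Q-1)/2}$ and verifying that the resulting constant is independent of $\Lambda$ and of $s\in(0,1)$, while the small-$\nu$ case is a routine compactness remark.
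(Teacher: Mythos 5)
Your proof is correct and follows essentially the same route as the paper: both reduce matters to the first case of Proposition~\ref{prop: stima media-1} (which indeed covers all of $F_{\Lambda}$, since $|\lambda|\leqslant 1\leqslant\nu$ there) and then minimize $e^{sx/2}\langle x\rangle^{-(Q-1)/2}$ in $x=\lambda\nu$ — the paper by locating the critical point $y_{0}=(Q-1)/s\leqslant\Lambda$, you by a two-regime split at $x=1/s$ that yields the bound on all of $(0,\infty)$. Your separate compactness argument for the finitely many $k$ with $\nu_{k}<\nu_{0}$ is a welcome extra: the paper's proof invokes Proposition~\ref{prop: stima media-1}, which is only stated for $\nu\geqslant\nu_{0}$, without commenting on that case.
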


\begin{proof}
Recall that
\[
F_{\Lambda}=\left\{ \left(\lambda,k\right)\in\mathbb{R}^{*}\times\mathbb{N}:\left\langle \lambda\nu\right\rangle \leqslant\Lambda,\left|\lambda\right|\leqslant1\right\} .
\]
By Proposition \ref{prop: stima media-1}, we have
\begin{align*}
\inf_{\substack{\left(\lambda,k\right)\in F_{\Lambda}\\
0<\left|\lambda\right|\leqslant1
}
}\left(e^{\frac{1}{2}\nu\left|\lambda\right|s}\int_{0}^{1}\left|\widehat{\chi_{B_{\rho}}}\left(\lambda,k\right)\right|^{2}d\rho\right)\gtrsim & \inf_{\substack{\left\langle \lambda\nu\right\rangle \leqslant\Lambda\\
0<\left|\lambda\right|\leqslant1
}
}e^{\frac{1}{2}\nu\left|\lambda\right|s}\left\langle \lambda\nu\right\rangle ^{-\frac{Q-1}{2}}\\
\gtrsim & \inf_{\substack{\left\langle x\right\rangle \leqslant\Lambda\\
0<x\leqslant\nu
}
}e^{\frac{1}{2}\left\langle x\right\rangle s}\left\langle x\right\rangle ^{-\frac{Q-1}{2}}=\inf_{1\leqslant y\leqslant\Lambda}e^{\frac{y}{2}s}y^{-\frac{Q-1}{2}}\approx s^{\frac{Q-1}{2}},
\end{align*}
where we used the fact that the function $g\left(y\right)=e^{\frac{y}{2}s}y^{-\frac{Q-1}{2}}$
attains its minimum in
\[
y_{0}=\frac{Q-1}{s}\leqslant\Lambda.
\]
\end{proof}

\section{\label{sec:Estimate J}Estimate for the term $\mathcal{J}$ in (\ref{eq:I x J})}

In this section, we obtain a lower bound for the term $\mathcal{J}$
in equation (\ref{eq:I x J}). To this end, we introduce a kernel
that is closely related to the heat kernel on the Heisenberg group.
More precisely, this kernel is defined as the convolution of the heat
kernel with a cutoff in the vertical variable, see (\ref{eq: conv Ks}).
Despite this modification, it retains several key properties of the
heat kernel, such as positivity, radial symmetry. We also derive upper
and lower bounds for this kernel.

In the second subsection, these estimates are used to control the
term $\mathcal{J}$. The integral defining $\mathcal{J}$ is decomposed
into an integral over the whole Heisenberg group, which provides the
main contribution, and a correction term given by the integral over
the complement of the region $F_{\Lambda}$.

\subsection{The kernel $K_{s}$}

Let $\Psi\in\mathcal{S}\left(\mathbb{R}\right)$ be a positive even
function with support in $\left[-\frac{1}{2},\frac{1}{2}\right]$
such that $\|\Psi\|_{L^{2}\left(\mathbb{R}\right)}=\sqrt{2\pi}$,
and let 
\[
F\left(t\right)=\left(\frac{1}{2\pi}\int_{-\infty}^{+\infty}\Psi\left(\lambda\right)e^{-i\lambda t}d\lambda\right)^{2}.
\]
Then
\[
\widehat{F}\left(\lambda\right)=\Psi*\Psi\left(\lambda\right)
\]
is positive and has support in the interval $\left[-1,1\right]$.
Moreover,
\[
\widehat{F}\left(\lambda\right)\leqslant\int_{-\infty}^{+\infty}F\left(t\right)dt=\frac{1}{\left(2\pi\right)^{2}}\int_{-\infty}^{+\infty}\left|\widehat{\Psi}\left(t\right)\right|^{2}dt=1.
\]
Let $s>0$ and let $q_{s}\left(z,t\right)$ denote the heat kernel
on $\mathbb{H}^{n}$ (see e.g. \cite[Section 2.8]{Thangavelu2004})
defined via its Fourier transform in $t$ as
\[
q_{s}^{\lambda}\left(z\right)=\sum_{k=0}^{+\infty}e^{-\left(2k+n\right)\left|\lambda\right|s}\varphi_{k}^{\lambda}\left(z\right),
\]
where $\varphi_{k}^{\lambda}$ are introduced in (\ref{eq:def phi lambda k}).
We define a new kernel on $\mathbb{H}^{n}$ by the one dimensional
convolution
\begin{equation}
K_{s}\left(z,t\right)=\int_{-\infty}^{+\infty}F\left(t-\tau\right)q_{s}\left(z,\tau\right)d\tau.\label{eq: conv Ks}
\end{equation}
In the following lemma we collect some of the main properties of $K_{s}$
that will be used throughout the paper.
\begin{lem}
\label{lem:prop K_s}The kernel $K_{s}$ is a symmetric radial function
on $\mathbb{H}^{n}$ and its Fourier coefficients satisfies
\begin{align}
\widehat{K}_{s}\left(\lambda,k\right) & \leqslant e^{-\left(2k+n\right)\left|\lambda\right|s}, &  & \text{if }\left|\lambda\right|\leqslant1,\label{eq:coeff K_Lambda,s}\\
\widehat{K}_{s}\left(\lambda,k\right) & =0, &  & \text{if }\left|\lambda\right|>1.\label{eq:supp Ks}
\end{align}
If $s<1$, then for every $M>0$ there exist positive constants $C$
and $A$ such that for every $\left(z,t\right)\in\mathbb{H}^{n}$
we have
\begin{equation}
0\leqslant K_{s}\left(z,t\right)\leqslant Cs^{-n}e^{-\frac{A}{s}\left|z\right|^{2}}\left(1+\left|t\right|\right)^{-M}.\label{eq:stima K_Lambda,s}
\end{equation}
Furthermore
\[
K_{s}\left(0,0\right)\geqslant cs^{-n}.
\]
\end{lem}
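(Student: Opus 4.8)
The plan is to exploit the factorization $K_s=F\ast_t q_s$ from \eqref{eq: conv Ks}, where the convolution acts only in the vertical variable, together with the explicit form of the partial Fourier transform of the Heisenberg heat kernel. I would start with the soft assertions. The heat kernel $q_s$ is radial and even in the vertical variable, and $F$ is even by construction, so the one-dimensional convolution in \eqref{eq: conv Ks} preserves both features; hence $K_s$ is a symmetric radial function on $\mathbb H^n$. Taking the Fourier transform in $t$ of \eqref{eq: conv Ks} turns the convolution into a product, so that $K_s^\lambda(z)=\widehat F(\lambda)\,q_s^\lambda(z)=\widehat F(\lambda)\sum_{k\ge 0}e^{-(2k+n)|\lambda|s}\varphi_k^\lambda(z)$. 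Comparing with the reconstruction formula in Theorem~\ref{thm:Fourier transf radial} identifies the Fourier coefficients of $K_s$ as $\widehat F(\lambda)\,e^{-(2k+n)|\lambda|s}$ times a fixed dimensional constant not exceeding $1$; since $\widehat F$ is nonnegative, bounded by $1$, and supported in $[-1,1]$, both \eqref{eq:coeff K_Lambda,s} and \eqref{eq:supp Ks} follow immediately, and $K_s\ge 0$ is clear from $F\ge0$ and $q_s\ge0$.

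For the quantitative estimates I would make $q_s^\lambda$ explicit. Summing the Laguerre series in the definition of $q_s^\lambda$ via the generating identity $\sum_{k\ge0}L_k^{n-1}(x)r^k=(1-r)^{-n}e^{-xr/(1-r)}$ with $r=e^{-2|\lambda|s}$ yields
\[
q_s^\lambda(z)=\frac{1}{(2\pi)^n}\left(\frac{|\lambda|}{2\sinh(|\lambda|s)}\right)^{\!n}e^{-\frac14|\lambda||z|^2\coth(|\lambda|s)},
\]
so that $K_s(z,t)=\tfrac{1}{2\pi}\int_{-1}^{1}e^{-it\lambda}\,\widehat F(\lambda)\,q_s^\lambda(z)\,d\lambda$. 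Two elementary inequalities, valid for $|\lambda|\le1$ and $0<s<1$, carry the first bound: from $\sinh x\ge x$ one gets $\bigl(\tfrac{|\lambda|}{2\sinh(|\lambda|s)}\bigr)^n\le(2s)^{-n}$, and from $x\coth x\ge1$ one gets $|\lambda|\coth(|\lambda|s)\ge 1/s$, hence $e^{-\frac14|\lambda||z|^2\coth(|\lambda|s)}\le e^{-|z|^2/4s}$. Integrating over the fixed interval $[-1,1]$ (the restriction $\mathrm{supp}\,\widehat F\subseteq[-1,1]$ is exactly what buys one extra power of $s$ compared with $q_s$, whose on-diagonal value is of order $s^{-(n+1)}$) gives $0\le K_s(z,t)\le Cs^{-n}e^{-|z|^2/4s}$, which is \eqref{eq:stima K_Lambda,s} in the case $M=0$.

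To produce the decay $(1+|t|)^{-M}$ I would integrate by parts $M$ times in $\lambda$. This is legitimate because $\lambda\mapsto q_s^\lambda(z)$ extends to a $C^\infty$ function across $\lambda=0$ — the apparently singular factors $\tfrac{|\lambda|}{\sinh(|\lambda|s)}$ and $|\lambda|\coth(|\lambda|s)$ coincide with the even real-analytic functions $\tfrac{\lambda}{\sinh(\lambda s)}$ and $\lambda\coth(\lambda s)$ — while $\widehat F$ is $C^\infty$ and vanishes to infinite order at $\pm1$, so $\lambda\mapsto K_s^\lambda(z)$ is $C^\infty$ with compact support in $[-1,1]$ and there are no boundary terms. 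The crux is to bound the derivatives uniformly in $s$: after the substitution $x=\lambda s$ the functions $x/\sinh x$ and $x\coth x$ have bounded derivatives on $[-1,1]$, so by the Leibniz rule and the Fa\`a di Bruno formula each $\lambda$-derivative of $K_s^\lambda(z)$ costs at most a constant times one more power of $|z|^2$, giving $|\partial_\lambda^M K_s^\lambda(z)|\le C_M s^{-n}(1+|z|^2)^M e^{-|z|^2/4s}$ for $|\lambda|\le1$, $0<s<1$. Since $s<1$, the polynomial factor is absorbed into a slightly weaker Gaussian $e^{-A|z|^2/s}$; dividing by $|t|^M$ and combining with the $M=0$ bound yields \eqref{eq:stima K_Lambda,s}. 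For the last assertion I would evaluate $K_s(0,0)=\tfrac{1}{2\pi}\int_{-1}^{1}\widehat F(\lambda)q_s^\lambda(0)\,d\lambda$, note that $q_s^\lambda(0)=(2\pi)^{-n}\bigl(\tfrac{|\lambda|}{2\sinh(|\lambda|s)}\bigr)^n\gtrsim s^{-n}$ for $|\lambda|\le1$, $0<s<1$ since $x/\sinh x$ is bounded below on $[0,1]$, and use $\widehat F\ge0$ together with $\int_{-1}^{1}\widehat F>0$ to conclude $K_s(0,0)\gtrsim s^{-n}$.

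The one genuinely delicate point is the $s$-uniformity of the estimate for $\partial_\lambda^M K_s^\lambda(z)$: one must check that repeated $\lambda$-differentiation never produces a negative power of $s$ worse than $s^{-n}$, which is precisely what the rescaling $x=\lambda s$ guarantees. Everything else amounts to bookkeeping with the explicit kernel.
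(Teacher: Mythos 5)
Your proof is correct, and for the quantitative estimates it takes a genuinely different route from the paper. The identification of the Fourier coefficients, the symmetry, and the positivity are handled the same way in both arguments. For the bounds, however, the paper works on the space side: it quotes the Gaussian upper bound $q_{s}(z,\tau)\leqslant Cs^{-n-1}e^{-\frac{A}{s}(|z|^{2}+|\tau|)}$ (Thangavelu, Prop.~2.8.2) and Li's on-diagonal lower bound $q_{1}(0,\tau)\geqslant Be^{-A|\tau|}$, and then gains the extra power of $s$ (from $s^{-n-1}$ to $s^{-n}$) by carefully estimating the one-dimensional convolution of $e^{-A|\tau|/s}$ against the rapidly decaying $F$; the polynomial decay in $t$ comes directly from $F\in\mathcal S(\mathbb R)$. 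You instead work entirely on the Fourier side, summing the Laguerre series to get the Mehler-type formula for $q_{s}^{\lambda}(z)$, gaining the extra power of $s$ from the restriction of $\lambda$ to the support of $\widehat F$, and producing the $(1+|t|)^{-M}$ decay by integrating by parts in $\lambda$. Your approach is more self-contained (it needs neither the cited Gaussian bounds nor Li's theorem, and the lower bound for $K_{s}(0,0)$ becomes a one-line positivity argument), at the cost of having to verify smoothness of $q_{s}^{\lambda}(z)$ across $\lambda=0$ and the $s$-uniformity of the $\lambda$-derivatives — both of which you correctly identify and settle via the rescaling $x=\lambda s$ and the absorption of the polynomial factor $(1+|z|^{2})^{M}$ into a weaker Gaussian. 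The absence of boundary terms in the integration by parts is also justified, since $\widehat F=\Psi*\Psi$ is smooth and supported in $[-1,1]$, hence flat at $\pm1$.
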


\begin{proof}
The fact that $K_{s}\left(-z,-t\right)=K_{s}\left(z,t\right)$ follows
from the properties of the heat kernel and the fact that the function
$\Psi$ is even. Observe now that, by Theorem \ref{thm:Fourier transf radial},
\[
\widehat{K_s}(\lambda,k)=\widehat{F}(\lambda) e^{-(2k+n)|\lambda|s}.
\]
Since 
$0\leqslant\left|\widehat{F}\left(\lambda\right)\right|\leqslant1$
and ${\rm supp}\widehat{F}\subseteq\left[-1,1\right]$, (\ref{eq:coeff K_Lambda,s}) and (\ref{eq:supp Ks}) follows. Furthermore,
since both $q_{s}\left(z,t\right)$ and $F\left(t\right)$ are positive
the same holds for $K_{s}$. To prove (\ref{eq:stima K_Lambda,s}),
observe that, since $F\in\mathcal{S}\left(\mathbb{R}\right)$, we
have
\[
F\left(t\right)\leqslant\frac{1}{\left(1+\left|t\right|\right)^{M}}
\]
and that by Proposition 2.8.2 in \cite{Thangavelu2004}, there exist
$A,C>0$ such that
\[
q_{s}\left(z,\tau\right)\leqslant Cs^{-n-1}e^{-\frac{A}{s}\left(\left|z\right|^{2}+\left|\tau\right|\right)}.
\]
Hence
\begin{align*}
K_{s}\left(z,t\right)\leqslant & Cs^{-n-1}e^{-\frac{A}{s}\left|z\right|^{2}}\int_{-\infty}^{+\infty}\frac{1}{\left(1+\left|t-\tau\right|\right)^{M}}e^{-\frac{A}{s}\left|\tau\right|}d\tau.
\end{align*}
Let us consider
\begin{align}
\int_{-\infty}^{+\infty}\frac{1}{\left(1+\left|t-\tau\right|\right)^{M}}e^{-\frac{A}{s}\left|\tau\right|}d\tau= & \frac{s}{A}\int_{-\infty}^{+\infty}\frac{1}{\left(1+\frac{s}{A}\left|\frac{A}{s}t-\omega\right|\right)^{M}}e^{-\left|\omega\right|}d\omega=\frac{s}{A}H_{\frac{s}{A}}\left(\frac{A}{s}t\right),\label{eq: H_(s/A)}
\end{align}
where
\[
H_{\alpha}\left(t\right)=\int_{-\infty}^{+\infty}\frac{1}{\left(1+\alpha\left|t-\omega\right|\right)^{M}}e^{-\left|\omega\right|}d\omega.
\]
Clearly, for every $t\in\mathbb{R}$ we have 
\[
H_{\alpha}\left(t\right)\leqslant\int_{-\infty}^{+\infty}e^{-\left|\omega\right|}d\omega=2.
\]
Since $H_{\alpha}$ is even we can assume $t>0$. Then
\begin{align*}
H_{\alpha}\left(t\right)= & \int_{-\infty}^{\frac{\alpha}{\alpha+1}t}\frac{1}{\left(1+\alpha\left|t-w\right|\right)^{M}}e^{-\left|\omega\right|}d\omega+\int_{\frac{\alpha}{\alpha+1}t}^{+\infty}\frac{1}{\left(1+\alpha\left|t-w\right|\right)^{M}}e^{-\left|\omega\right|}d\omega\\
\leqslant & \left\Vert \left(1+\alpha\left|t-\cdot\right|\right)^{-M}\right\Vert _{L^{\infty}\left(-\infty,\frac{\alpha}{\alpha+1}t\right)}\int_{-\infty}^{\frac{\alpha}{\alpha+1}t}e^{-\left|\omega\right|}d\omega\\
 & +\left\Vert e^{-\frac{1}{2}\left|\cdot\right|}\right\Vert _{L^{\infty}\left(\frac{\alpha}{\alpha+1}t,+\infty\right)}\int_{\frac{\alpha}{\alpha+1}t}^{+\infty}\frac{e^{-\frac{1}{2}\left|\omega\right|}}{\left(1+\alpha\left|t-w\right|\right)^{M}}d\omega\\
\leqslant & c\frac{1}{\left(1+\frac{\alpha}{\alpha+1}t\right)^{M}}+e^{-\frac{1}{2}\frac{\alpha}{\alpha+1}t}\leqslant\frac{c}{\left(1+\frac{\alpha}{\alpha+1}t\right)^{M}}.
\end{align*}
Therefore, since $s<1$, by (\ref{eq: H_(s/A)}), we have
\begin{align*}
K_{s}\left(z,t\right)\lesssim & s^{-n}e^{-\frac{A}{s}\left|z\right|^{2}}\left(1+\frac{\frac{s}{A}}{\frac{s}{A}+1}\left|\frac{A}{s}t\right|\right)^{-M}\lesssim s^{-n}e^{-\frac{A}{s}\left|z\right|^{2}}\left(1+\left|t\right|\right)^{-M}.
\end{align*}
Finally, by Theorem 1 in \cite{Li2007}, we have $q_{1}\left(0,\tau\right)\geqslant Be^{-A\left|\tau\right|}$.
Using (2.8.6) in \cite{Thangavelu2004}, this implies that
\[
q_{s}\left(0,\tau\right)\geqslant s^{-n-1}Be^{-As^{-1}\left|\tau\right|}.
\]
We fix $\varepsilon>0$ such that $F\left(t\right)>\frac{1}{2}$ if
$\left|t\right|<\varepsilon$. Since $s<1,$ we have 
\begin{align*}
K_{s}\left(0,0\right)= & \int_{-\infty}^{+\infty}F\left(-\tau\right)q_{s}\left(0,\tau\right)d\tau\gtrsim s^{-n-1}\int_{0}^{\varepsilon}e^{-As^{-1}\tau}d\tau\gtrsim s^{-n}.
\end{align*}
\end{proof}

\subsection{Estimate from below of $\mathcal{J}$.}

To estimate $\mathcal{J}$, we split it into several parts. Using
(\ref{eq:coeff K_Lambda,s}) we write,
\begin{align*}
\mathcal{J}\geqslant & \sum_{k=0}^{+\infty}\int_{-\infty}^{+\infty}\chi_{F_{\Lambda}}\left(k,\lambda\right)\widehat{K_{s}}\left(\lambda,k\right)\sum_{\left|\alpha\right|=k}\left\Vert \widehat{\sigma}\left(\lambda\right)\Phi_{\alpha}^{\lambda}\right\Vert _{L^{2}\left(\mathbb{R}^{n}\right)}^{2}\left|\lambda\right|^{n}d\lambda\\
= & \sum_{k=0}^{+\infty}\int_{-\infty}^{+\infty}\widehat{K_{s}}\left(\lambda,k\right)\sum_{\left|\alpha\right|=k}\left\Vert \widehat{\sigma}\left(\lambda\right)\Phi_{\alpha}^{\lambda}\right\Vert _{L^{2}\left(\mathbb{R}^{n}\right)}^{2}\left|\lambda\right|^{n}d\lambda\\
 & -\sum_{k=0}^{+\infty}\int_{-\infty}^{+\infty}\chi_{F_{\Lambda}^{C}\left(k,\lambda\right)}\widehat{K_{s}}\left(\lambda,k\right)\sum_{\left|\alpha\right|=k}\left\Vert \widehat{\sigma}\left(\lambda\right)\Phi_{\alpha}^{\lambda}\right\Vert _{L^{2}\left(\mathbb{R}^{n}\right)}^{2}\left|\lambda\right|^{n}d\lambda\\
= & A-B.
\end{align*}

\subsubsection{Estimate of $A$}

To estimate the term $A$ we exploit global properties of the kernel
$K_{s}$ and the fact that the measure $\mu$ is upper Ahlfors regular. 

Since
\begin{align}
\left\Vert \widehat{\sigma}\left(\lambda\right)\Phi_{\alpha}^{\lambda}\right\Vert _{L^{2}\left(\mathbb{R}^{n}\right)}^{2}= & \int_{\mathbb{R}^{n}}\left|\sum_{j=1}^{N}\widehat{\delta}_{\left(\rho_{j},\tau_{j}\right)}\left(\lambda\right)\Phi_{\alpha}^{\lambda}-N\widehat{\mu}\left(\lambda\right)\Phi_{\alpha}^{\lambda}\right|^{2}d\xi\label{eq:sigma hat}\\
\geqslant & \int_{\mathbb{R}^{n}}\left|\sum_{j=1}^{N}\widehat{\delta}_{\left(\rho_{j},\tau_{j}\right)}\left(\lambda\right)\Phi_{\alpha}^{\lambda}\right|^{2}d\xi-2N\sum_{j=1}^{N}\mathrm{Re}\int_{\mathbb{R}^{n}}\widehat{\delta}_{\left(\rho_{j},\tau_{j}\right)}\left(\lambda\right)\Phi_{\alpha}^{\lambda}\overline{\widehat{\mu}\left(\lambda\right)\Phi_{\alpha}^{\lambda}}d\xi,\nonumber 
\end{align}
we can split $A$ as 
\begin{align*}
A\geqslant & \sum_{k=0}^{+\infty}\int_{-\infty}^{+\infty}\widehat{K_{s}}\left(\lambda,k\right)\sum_{\left|\alpha\right|=k}\int_{\mathbb{R}^{n}}\left|\sum_{j=1}^{N}\widehat{\delta}_{\left(\rho_{j},\tau_{j}\right)}\left(\lambda\right)\Phi_{\alpha}^{\lambda}\right|^{2}d\xi\left|\lambda\right|^{n}d\lambda\\
 & -2N\sum_{k=0}^{+\infty}\int_{-\infty}^{+\infty}\widehat{K_{s}}\left(\lambda,k\right)\left(\sum_{j=1}^{N}\mathrm{Re}\sum_{\left|\alpha\right|=k}\int_{\mathbb{R}^{n}}\widehat{\delta}_{\left(\rho_{j},\tau_{j}\right)}\left(\lambda\right)\Phi_{\alpha}^{\lambda}\overline{\widehat{\mu}\left(\lambda\right)\Phi_{\alpha}^{\lambda}}d\xi\right)\left|\lambda\right|^{n}d\lambda\\
= & A_{1}-A_{2}.
\end{align*}
We now estimate $A_{2}$ from above. We start be rewriting it as follows.
\begin{lem}
There exists a constant $\tilde{c}>0$ such that
\[
A_{2}=\tilde{c}N\sum_{j=1}^{N}\mu*K_{s}\left(\rho_{j},\tau_{j}\right).
\]
\end{lem}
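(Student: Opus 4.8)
The idea is that, summed over $j$, the term $A_2$ reconstructs a fixed multiple of $\int_{\mathbb H^n}\delta_{(\rho_j,\tau_j)}*K_s\,d\mu$, which by the symmetry of $K_s$ equals $\mu*K_s(\rho_j,\tau_j)$; conceptually this is the Parseval pairing of $\widehat\mu(\lambda)$ with $\widehat{\delta_{(\rho_j,\tau_j)}*K_s}(\lambda)=\pi_\lambda(\rho_j,\tau_j)\widehat{K_s}(\lambda)$ (Theorem \ref{thm:trasf conv}), but to keep the argument elementary and to avoid any trace–class issue for $\widehat\mu(\lambda)$, I would unfold everything into absolutely convergent scalar integrals. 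Since $\widehat{\delta}_{(\rho_j,\tau_j)}(\lambda)=\pi_\lambda(\rho_j,\tau_j)$ and $\widehat\mu(\lambda)\varphi=\int_{\mathbb H^n}\pi_\lambda(\zeta,\eta)\varphi\,d\mu(\zeta,\eta)$, unitarity of $\pi_\lambda$, the homomorphism property, and the fact that $\mu$ is a real finite measure give
\[
\sum_{|\alpha|=k}\int_{\mathbb R^n}\widehat{\delta}_{(\rho_j,\tau_j)}(\lambda)\Phi_\alpha^\lambda\,\overline{\widehat\mu(\lambda)\Phi_\alpha^\lambda}\,d\xi=\int_{\mathbb H^n}\sum_{|\alpha|=k}\langle\pi_\lambda(w)\Phi_\alpha^\lambda,\Phi_\alpha^\lambda\rangle\,d\mu(\zeta,\eta),\qquad w:=(\zeta,\eta)^{-1}\circ(\rho_j,\tau_j),
\]
the interchange of the finite $\alpha$-sum with the $\mu$-integral being trivial.

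Next I would write $w=(w_1,w_2)$ with $w_1\in\mathbb C^n$ and $w_2\in\mathbb R$ and use the factorisation $\pi_\lambda(w_1,w_2)=e^{i\lambda w_2}\pi_\lambda(w_1)$ from \eqref{eq: pi lambde} together with Proposition \ref{prop:phi_k}, which give $\sum_{|\alpha|=k}\langle\pi_\lambda(w)\Phi_\alpha^\lambda,\Phi_\alpha^\lambda\rangle=e^{i\lambda w_2}(2\pi/|\lambda|)^n\varphi_k^\lambda(w_1)$. Multiplying by $\widehat{K_s}(\lambda,k)=\widehat F(\lambda)e^{-(2k+n)|\lambda|s}$ (Lemma \ref{lem:prop K_s}), summing the $k$-series, and using the spectral definition $q_s^\lambda=\sum_{k\geqslant0}e^{-(2k+n)|\lambda|s}\varphi_k^\lambda$ of the heat kernel, one gets
\[
\sum_{k=0}^{\infty}\widehat{K_s}(\lambda,k)\sum_{|\alpha|=k}\langle\pi_\lambda(w)\Phi_\alpha^\lambda,\Phi_\alpha^\lambda\rangle=\left(\frac{2\pi}{|\lambda|}\right)^{n}\widehat F(\lambda)\,q_s^\lambda(w_1)\,e^{i\lambda w_2}.
\]
Since $K_s(z,\cdot)=F*q_s(z,\cdot)$ in the vertical variable (see \eqref{eq: conv Ks}), its partial Fourier transform in $t$ is $K_s^\lambda=\widehat F(\lambda)q_s^\lambda$, and $K_s$ is even in $t$ because, by Lemma \ref{lem:prop K_s}, it is symmetric and radial. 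Hence, multiplying the last display by $|\lambda|^n$, integrating over $\mathbb R^{*}$, and applying one–dimensional Fourier inversion in $t$,
\[
\sum_{k=0}^{\infty}\int_{\mathbb R}\widehat{K_s}(\lambda,k)\sum_{|\alpha|=k}\langle\pi_\lambda(w)\Phi_\alpha^\lambda,\Phi_\alpha^\lambda\rangle\,|\lambda|^n\,d\lambda=(2\pi)^n\int_{\mathbb R}K_s^\lambda(w_1)\,e^{i\lambda w_2}\,d\lambda=(2\pi)^{n+1}K_s(w).
\]

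Feeding this back into the definition of $A_2$, using Fubini to pull the $\mu$-integral outside (legitimate by the pointwise bounds on $K_s$ in Lemma \ref{lem:prop K_s}) and noting that $K_s\geqslant0$ is real, so that the $\mathrm{Re}$ in the definition of $A_2$ is superfluous, I would conclude
\[
A_2=2(2\pi)^{n+1}N\sum_{j=1}^{N}\int_{\mathbb H^n}K_s\bigl((\zeta,\eta)^{-1}\circ(\rho_j,\tau_j)\bigr)\,d\mu(\zeta,\eta)=2(2\pi)^{n+1}N\sum_{j=1}^{N}\mu*K_s(\rho_j,\tau_j),
\]
which is the statement with $\tilde c=2(2\pi)^{n+1}$ (up to the fixed positive normalisation constant relating $\widehat{K_s}(\lambda,k)$ to the scalar eigenvalue of the operator $\widehat{K_s}(\lambda)$, which is not tracked in the paper). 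I do not expect a genuine obstacle here: the only delicate-looking point — pairing against $\widehat\mu(\lambda)$, which need not be Hilbert--Schmidt — is bypassed by the scalar unfolding above, and what remains is the routine justification of the interchanges of summation and integration, controlled by the Gaussian spatial decay and rapid vertical decay of $q_s$ recorded in Lemma \ref{lem:prop K_s}. The step I would write out most carefully is the chain of identities in the second paragraph, where Proposition \ref{prop:phi_k}, the spectral form of $q_s$, and the vertical Fourier inversion combine to collapse the double sum and integral back onto $K_s$ itself.
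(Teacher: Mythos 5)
Your proposal is correct and follows essentially the same route as the paper: both unfold the pairing into scalar integrals via the homomorphism/unitarity properties of $\pi_\lambda$ and Proposition \ref{prop:phi_k}, then resum over $k$ and invert in the vertical variable to recover $K_s$ of the group difference, and finally use the symmetry of $K_s$ to recognize $\mu*K_s(\rho_j,\tau_j)$. The only cosmetic difference is that you carry out the reconstruction step by hand through the spectral expansion of $q_s^\lambda$ and one-dimensional Fourier inversion, whereas the paper cites the reconstruction formula \eqref{thm: ricostruzione} directly.
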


\begin{proof}
Using (\ref{eq: pi lambde}) and Proposition \ref{prop: proprieta Shrodinger},
we write
\begin{align*}
\int_{\mathbb{R}^{n}}\widehat{\delta}_{\left(\rho_{j},\tau_{j}\right)}\left(\lambda\right)\Phi_{\alpha}^{\lambda}\overline{\widehat{\mu}\left(\lambda\right)\Phi_{\alpha}^{\lambda}}d\xi & =\left\langle \pi_{\lambda}\left(\rho_{j},\tau_{j}\right)\Phi_{\alpha}^{\lambda},\widehat{\mu}\left(\lambda\right)\Phi_{\alpha}^{\lambda}\right\rangle \\
 & =\int_{\mathbb{H}^{n}}\left\langle \pi_{\lambda}\left(\rho_{j},\tau_{j}\right)\Phi_{\alpha}^{\lambda},\pi_{\lambda}\left(z,t\right)\Phi_{\alpha}^{\lambda}\right\rangle d\mu\left(z,t\right)\\
 & =\int_{\mathbb{H}^{n}}\left\langle \pi_{\lambda}\left(\left(z,t\right)^{-1}\circ\left(\rho_{j},\tau_{j}\right)\right)\Phi_{\alpha}^{\lambda},\Phi_{\alpha}^{\lambda}\right\rangle d\mu\left(z,t\right)\\
 & =\int_{\mathbb{H}^{n}}e^{i\lambda\left(\tau_{j}-t-\frac{1}{2}\mathtt{Im}z\cdot\overline{\rho_{j}}\right)}\left\langle \pi_{\lambda}\left(\rho_{j}-z\right)\Phi_{\alpha}^{\lambda},\Phi_{\alpha}^{\lambda}\right\rangle d\mu\left(z,t\right).
\end{align*}
Hence, from Proposition \ref{prop:phi_k}, we get 
\begin{align*}
\sum_{\left|\alpha\right|=k}\int_{\mathbb{R}^{n}}\widehat{\delta}_{\left(\rho_{j},\tau_{j}\right)}\left(\lambda\right)\Phi_{\alpha}^{\lambda}\overline{\widehat{\nu}\left(\lambda\right)\Phi_{\alpha}^{\lambda}}d\xi= & \int_{\mathbb{H}^{n}}e^{i\lambda\left(\tau_{j}-t-\frac{1}{2}\mathtt{Im}z\cdot\overline{\rho_{j}}\right)}\left(\frac{2\pi}{\left|\lambda\right|}\right)^{n}\varphi_{k}^{\lambda}\left(\rho_{j}-z\right)d\mu\left(z,t\right).
\end{align*}
Finally, by Theorem \ref{thm: ricostruzione}, we have
\begin{align*}
A_{2}\simeq & N\sum_{k=0}^{+\infty}\int_{-\infty}^{+\infty}\widehat{K_{s}}\left(\lambda,k\right)\left(\sum_{j=1}^{N}\mathrm{Re}\int_{\mathbb{H}^{n}}e^{i\lambda\left(\tau_{j}-t-\frac{1}{2}\mathtt{Im}z\cdot\overline{\rho_{j}}\right)}\varphi_{k}^{\lambda}\left(\rho_{j}-z\right)d\mu\left(z,t\right)\right)d\lambda\\
= & N\sum_{j=1}^{N}\mathrm{Re}\int_{\mathbb{H}^{n}}\int_{-\infty}^{+\infty}e^{i\lambda\left(\tau_{j}-t-\frac{1}{2}\mathtt{Im}z\cdot\overline{\rho_{j}}\right)}\sum_{k=0}^{+\infty}\widehat{K_{s}}\left(\lambda,k\right)\varphi_{k}^{\lambda}\left(\rho_{j}-z\right)d\lambda d\mu\left(z,t\right)\\
= & \tilde{c}N\sum_{j=1}^{N}\int_{\mathbb{H}^{n}}K_{s}\left(\rho_{j}-z,-\left(\tau_{j}-t-\frac{1}{2}\mathtt{Im}z\cdot\overline{\rho_{j}}\right)\right)d\mu\left(z,t\right)\\
= & \tilde{c}N\sum_{j=1}^{N}\int_{\mathbb{H}^{n}}K_{s}\left(\left(\rho_{j},\tau_{j}\right)^{-1}\circ\left(z,t\right)\right)d\mu\left(z,t\right)\\
= & \tilde{c}N\sum_{j=1}^{N}\int_{\mathbb{H}^{n}}K_{s}\left(\left(z,t\right)^{-1}\circ\left(\rho_{j},\tau_{j}\right)\right)d\mu\left(z,t\right)\\
= & \tilde{c}N\sum_{j=1}^{N}\mu*K_{s}\left(\rho_{j},\tau_{j}\right).
\end{align*}
\end{proof}
To estimate the term $A_{2}$, we proceed in two steps. We first introduce
a technical lemma, which is then followed by a result providing an
upper bound for the convolution appearing above.
\begin{lem}
\label{lem: mu<ab}Let $\alpha,\beta\geqslant0$ and let $Q_{\alpha,\beta}=\left\{ \left(z,t\right)\in\mathbb{H}^{n}:\left|z\right|\leqslant\alpha,\left|t\right|\leqslant\beta\right\} $.
There exists $c\geqslant0$ such that for every $\left(\rho,\tau\right)\in\mathbb{H}^{n}$
and $\alpha,\beta\geqslant0$
\[
\mu\left(\left(\rho,\tau\right)\circ Q_{\alpha,\beta}\right)\leqslant c\alpha^{2n}\beta.
\]
\end{lem}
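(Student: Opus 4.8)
The plan is to deduce the lemma from the following \emph{non-concentration} estimate for $\mu$: there is a constant $c'>0$, depending only on the Ahlfors regularity constant of $\mu$, such that
\[
\mu(E)\leqslant c'\,\lvert E\rvert\qquad\text{for every bounded Borel set }E\subseteq\mathbb{H}^{n}.
\]
Granting this, the lemma follows at once. Indeed, by the left-invariance of Lebesgue measure the set $(\rho,\tau)\circ Q_{\alpha,\beta}$ has the same Lebesgue measure as $Q_{\alpha,\beta}$, and since Lebesgue measure on $\mathbb{H}^{n}$ is Lebesgue measure on $\mathbb{R}^{2n+1}\cong\mathbb{C}^{n}\times\mathbb{R}$, Fubini's theorem gives $\lvert Q_{\alpha,\beta}\rvert=2\omega_{2n}\,\alpha^{2n}\beta$, where $\omega_{2n}$ denotes the volume of the Euclidean unit ball of $\mathbb{R}^{2n}$. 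Therefore $\mu\big((\rho,\tau)\circ Q_{\alpha,\beta}\big)\leqslant c'\,\lvert Q_{\alpha,\beta}\rvert=2c'\omega_{2n}\,\alpha^{2n}\beta$, which is the assertion with $c=2c'\omega_{2n}$.

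To prove the non-concentration estimate, fix a bounded Borel set $E$ and $\varepsilon>0$. By outer regularity of Lebesgue measure, pick a bounded open set $U\supseteq E$ with $\lvert U\rvert\leqslant\lvert E\rvert+\varepsilon$. Since $U$ is open, the family $\mathcal{F}=\{\mathfrak{B}_{r}(x):x\in E,\ \mathfrak{B}_{r}(x)\subseteq U\}$ of Korányi balls covers $E$ and has radii bounded by $\mathrm{diam}(U)$; by the $5r$-covering (Vitali) lemma there is a countable pairwise disjoint subfamily $\{\mathfrak{B}_{r_{i}}(x_{i})\}_{i}\subseteq\mathcal{F}$ with $E\subseteq\bigcup_{i}\mathfrak{B}_{5r_{i}}(x_{i})$. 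By left-invariance and the dilation identity $\lvert D_{\rho}A\rvert=\rho^{Q}\lvert A\rvert$ one has $\lvert\mathfrak{B}_{r}(x)\rvert=C_{n}r^{Q}$ with $C_{n}=\lvert\mathfrak{B}_{1}\rvert$ independent of $x$; combining this with the hypothesis $\mu(\mathfrak{B}_{r})\leqslant c\,r^{Q}$, the disjointness of the $\mathfrak{B}_{r_{i}}(x_{i})$, and $\bigcup_{i}\mathfrak{B}_{r_{i}}(x_{i})\subseteq U$, we get
\[
\mu(E)\leqslant\sum_{i}\mu\big(\mathfrak{B}_{5r_{i}}(x_{i})\big)\leqslant c\,5^{Q}\sum_{i}r_{i}^{Q}=\frac{c\,5^{Q}}{C_{n}}\sum_{i}\lvert\mathfrak{B}_{r_{i}}(x_{i})\rvert\leqslant\frac{c\,5^{Q}}{C_{n}}\,\lvert U\rvert\leqslant\frac{c\,5^{Q}}{C_{n}}\big(\lvert E\rvert+\varepsilon\big).
\]
Letting $\varepsilon\to0$ yields the estimate with $c'=c\,5^{Q}/C_{n}$. (Equivalently, this argument shows $\mu$ is absolutely continuous with respect to Lebesgue measure with density bounded by $c'$.)

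I expect the main subtlety to lie in this reduction rather than in any computation. The tempting alternative --- covering $(\rho,\tau)\circ Q_{\alpha,\beta}$ directly by Korányi balls and summing $\mu(\mathfrak{B}_{r})\leqslant c\,r^{Q}$ --- fails: the left translation by $(\rho,\tau)$ acts on $Q_{\alpha,\beta}$ as a vertical shear of size proportional to $\lvert\rho\rvert$, so the translated region is badly skewed and any efficient ball cover is lost, especially when $\lvert\rho\rvert$ is large or when $\beta\ll\alpha^{2}$; at best one recovers the weaker exponent $\alpha^{2n+1}\beta^{1/2}$. Passing through the Lebesgue volume of $Q_{\alpha,\beta}$ circumvents this, since $\lvert(\rho,\tau)\circ Q_{\alpha,\beta}\rvert=\lvert Q_{\alpha,\beta}\rvert$ holds exactly, and because the regularity hypothesis, being stated at the top homogeneous dimension $Q=2n+2$, is strong enough to promote control on balls to control on arbitrary sets.
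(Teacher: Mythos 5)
Your proof is correct. The reduction to the non-concentration estimate $\mu(E)\leqslant c'\lvert E\rvert$ is sound: the $5r$-covering argument is valid because the Kor\'anyi distance is a genuine left-invariant metric, balls satisfy $\lvert\mathfrak{B}_r\rvert=C_nr^{Q}$ by the dilation identity, and the final step only uses the left-invariance of Lebesgue measure together with $\lvert Q_{\alpha,\beta}\rvert=2\omega_{2n}\alpha^{2n}\beta$. The paper's proof is a more hands-on version of the same covering idea: it takes a maximal disjoint packing of $Q_{\alpha,\beta}$ itself by Kor\'anyi balls of one small radius $r$, bounds their number $M$ by $\lvert Q_{\alpha,\beta}\rvert/(c_nr^{Q})$, triples the balls to obtain a cover, and then \emph{left-translates the whole cover}, using that $(\rho,\tau)\circ\mathfrak{B}'_j$ is again a Kor\'anyi ball of radius $3r$. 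Your route proves the stronger, reusable statement that $\mu$ is absolutely continuous with density bounded by $c\,5^{Q}/C_n$; the paper's route avoids outer regularity and the Vitali lemma at the cost of being tailored to the box $Q_{\alpha,\beta}$.

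One remark in your closing paragraph is mistaken: you assert that covering $(\rho,\tau)\circ Q_{\alpha,\beta}$ directly by Kor\'anyi balls "fails" because of the vertical shear. That is exactly what the paper does, and it works, precisely because left translations are isometries of the Kor\'anyi metric: an efficient ball cover of $Q_{\alpha,\beta}$ translates to an equally efficient ball cover of $(\rho,\tau)\circ Q_{\alpha,\beta}$ with the same radii. The shear objection would apply to Euclidean balls or boxes, not to Kor\'anyi balls. This does not affect the validity of your proof, but the heuristic should be corrected.
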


\begin{proof}
Let $r$ be a small positive number and let $\left\{ \mathfrak{B}_{j}\right\} _{j=1}^{M}$
be a maximal collection of disjoint Heisenberg balls of radius $r$
with center in $Q_{\alpha,\beta}$. Observe that there exists a constant
$c_{n}$ such that if $r$ is small enough then $\left|\mathfrak{B}_{j}\cap Q_{\alpha,\beta}\right|\geqslant c_{n}r^{Q}$
for every $j=1,\dots,M$. Hence,
\begin{equation}
c_{n}Mr^{Q}\leqslant\left|\bigcup_{j=1}^{M}\left(\mathfrak{B}_{j}\cap Q_{\alpha,\beta}\right)\right|\leqslant\left|Q_{\alpha,\beta}\right|=\alpha^{2n}\beta.\label{eq: M<ab}
\end{equation}
Let $\mathfrak{B}_{j}'$ be the ball of radius $3r$ with the same
center of $\mathfrak{B}_{j}$. The family $\left\{ \mathfrak{B}_{j}'\right\} _{j=1}^{M}$
covers $Q_{\alpha,\beta}$. Since $\mu$ is upper Ahlfors regular,
\ref{eq: M<ab} implies that
\[
\mu\left(\left(\rho,\tau\right)\circ Q_{\alpha,\beta}\right)\leqslant\mu\left(\bigcup_{j=1}^{M}\left(\rho,\tau\right)\circ\mathfrak{B}_{j}'\right)\leqslant\sum_{j=1}^{M}\mu\left(\left(\rho,\tau\right)\circ\mathfrak{B}_{j}'\right)\lesssim Mr^{Q}\lesssim\alpha^{2n}\beta.
\]
\end{proof}
\begin{lem}
There exists $C>0$ such that for every $\left(\rho,\tau\right)\in\mathbb{H}^{n}$,
we have
\[
\mu*K_{s}\left(\rho,\tau\right)\leqslant C.
\]
\end{lem}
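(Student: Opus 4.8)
The plan is to combine the pointwise bound \eqref{eq:stima K_Lambda,s} for $K_{s}$ with the volume estimate of Lemma~\ref{lem: mu<ab} through a dyadic decomposition of $\mathbb{H}^{n}$ adapted to the anisotropic boxes $Q_{\alpha,\beta}$. By the definition of the convolution,
\[
\mu*K_{s}(\rho,\tau)=\int_{\mathbb{H}^{n}}K_{s}\big((z,t)^{-1}\circ(\rho,\tau)\big)\,d\mu(z,t),
\]
so, writing $(y,\sigma):=(z,t)^{-1}\circ(\rho,\tau)$ (hence $|y|=|z-\rho|$) and fixing $M=2$ in \eqref{eq:stima K_Lambda,s}, it suffices to prove that
\[
\int_{\mathbb{H}^{n}}e^{-\frac{A}{s}|y|^{2}}\big(1+|\sigma|\big)^{-2}\,d\mu(z,t)\lesssim s^{n},
\]
since multiplying by $Cs^{-n}$ then yields $\mu*K_{s}(\rho,\tau)\leqslant C$, with a constant independent of $(\rho,\tau)$ (and, incidentally, of $s\in(0,1)$).

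To establish the displayed estimate I would split $\mathbb{H}^{n}$ according to the size of $\big(|y|,|\sigma|\big)$. For integers $j,\ell\geqslant0$, let $R_{j,\ell}$ be the set of $(z,t)$ such that $|y|<\sqrt s$ when $j=0$ and $2^{j-1}\sqrt s\leqslant|y|<2^{j}\sqrt s$ when $j\geqslant1$, and such that $|\sigma|<1$ when $\ell=0$ and $2^{\ell-1}\leqslant|\sigma|<2^{\ell}$ when $\ell\geqslant1$. Since $(z,t)\mapsto(z,t)^{-1}\circ(\rho,\tau)$ is a bijection of $\mathbb{H}^{n}$, the sets $R_{j,\ell}$ partition $\mathbb{H}^{n}$ up to a null set. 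On $R_{j,\ell}$ with $j,\ell\geqslant1$ the integrand is at most $e^{-\frac{A}{4}2^{2j}}\,2^{2-2\ell}$ (and each factor is simply bounded by $1$ when the corresponding index is $0$). On the other hand, $(z,t)\in R_{j,\ell}$ forces $(z,t)^{-1}\circ(\rho,\tau)\in Q_{2^{j}\sqrt s,\,2^{\ell}}$, hence $(z,t)\in(\rho,\tau)\circ Q_{2^{j}\sqrt s,\,2^{\ell}}$ because $Q_{\alpha,\beta}^{-1}=Q_{\alpha,\beta}$; Lemma~\ref{lem: mu<ab} then gives $\mu(R_{j,\ell})\lesssim(2^{j}\sqrt s)^{2n}\,2^{\ell}=2^{2nj}s^{n}2^{\ell}$. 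Summing over $j$ and $\ell$,
\[
\int_{\mathbb{H}^{n}}e^{-\frac{A}{s}|y|^{2}}\big(1+|\sigma|\big)^{-2}\,d\mu\lesssim s^{n}\Big(1+\sum_{j\geqslant1}2^{2nj}e^{-\frac{A}{4}2^{2j}}\Big)\Big(1+\sum_{\ell\geqslant1}2^{2-\ell}\Big)\lesssim s^{n},
\]
which is the required bound.

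The argument is elementary, and the only delicate points are of a bookkeeping nature. First, one has to match the natural length scale $\sqrt s$ of the Gaussian factor in \eqref{eq:stima K_Lambda,s} with the $\alpha^{2n}\beta$ growth of $|Q_{\alpha,\beta}|$ in Lemma~\ref{lem: mu<ab}: it is precisely this matching that makes the two powers of $s$ cancel, producing a bound independent of $s$. Second, one must choose the decay exponent $M>1$ in \eqref{eq:stima K_Lambda,s} (here $M=2$) so that the geometric series in $\ell$ converges; any $M$ works for the very rapidly convergent series in $j$. No genuine obstacle arises: the Gaussian decay of $K_{s}$, the upper Ahlfors regularity of $\mu$ encoded in Lemma~\ref{lem: mu<ab}, and the symmetry $Q_{\alpha,\beta}^{-1}=Q_{\alpha,\beta}$ together do all the work.
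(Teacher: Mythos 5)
Your proof is correct and follows essentially the same route as the paper: the pointwise bound \eqref{eq:stima K_Lambda,s}, a dyadic decomposition of $\mathbb{H}^{n}$ adapted to the boxes $Q_{\alpha,\beta}$ (the paper's sets $E_{k,j}$ are your $R_{j,\ell}$ up to reparametrization), and Lemma~\ref{lem: mu<ab} to get $\mu\lesssim s^{n}2^{2nj}2^{\ell}$, with the powers of $s$ cancelling exactly as you describe. The only cosmetic difference is that the paper first replaces the Gaussian by the polynomial bound $(1+\tfrac{1}{s}|z|^{2})^{-nM}$ and keeps $M>1$ general, whereas you keep the Gaussian and fix $M=2$; both choices make the series converge.
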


\begin{proof}
For any $k,j\in\mathbb{N}$, let
\[
E_{k,j}=\left\{ \left(z,t\right)\in\mathbb{H}^{n}:2^{k-1}<\frac{1}{s}\left|z\right|^{2}\leqslant2^{k}\text{ and }2^{j-1}<\left|t\right|\leqslant2^{j}\right\} .
\]
where, as a notational convention, we interpret $2^{-1}$ as $0$.
Using (\ref{eq:stima K_Lambda,s}) for $M>1$, we can write
\begin{align*}
K_{s}\left(z,t\right)\leqslant & Cs^{-n}e^{-\frac{A}{s}\left|z\right|^{2}}\left(1+\left|t\right|\right)^{-M}\\
\leqslant & Cs^{-n}\left(1+\frac{1}{s}\left|z\right|^{2}\right)^{-nM}\left(1+\left|t\right|\right)^{-M}\\
\leqslant & Cs^{-n}\sum_{k=0}^{+\infty}\sum_{j=0}^{+\infty}\chi_{E_{k,j}}\left(z,t\right)\left(1+2^{k-1}\right)^{-nM}\left(1+2^{j-1}\right)^{-M}.
\end{align*}
Since $E_{k,j}\subseteq Q_{\sqrt{s2^{k}},2^{j}}$, by Lemma \ref{lem: mu<ab},
we have
\[
\mu\left(\left(\rho,\tau\right)\circ E_{k,j}\right)\leqslant\mu\left(\left(\rho,\tau\right)\circ Q_{\sqrt{s2^{k}},2^{j}}\right)\leqslant c\left(\sqrt{s2^{k}}\right)^{2n}2^{j}=cs^{n}2^{kn}2^{j}.
\]
Therefore, using that $-E_{k,j}=E_{k,j}$, we obtain
\begin{align*}
\mu*K_{s}\left(\rho,\tau\right)= & \int_{\mathbb{H}^{n}}K_{s}\left(\left(z,t\right)^{-1}\circ\left(\rho,\tau\right)\right)d\mu\left(z,t\right)\\
\leqslant & Cs^{-n}\sum_{k=0}^{+\infty}\sum_{j=0}^{+\infty}\int_{\mathbb{H}^{n}}\chi_{E_{k,j}}\left(\left(z,t\right)^{-1}\circ\left(\rho,\tau\right)\right)\left(1+2^{k-1}\right)^{-nM}\left(1+2^{j-1}\right)^{-M}d\mu\left(z,t\right)\\
= & Cs^{-n}\sum_{k=0}^{+\infty}\sum_{j=0}^{+\infty}\left(1+2^{k-1}\right)^{-nM}\left(1+2^{j-1}\right)^{-M}\mu\left(\left(\rho,\tau\right)\circ E_{k,j}\right)\\
\lesssim & \sum_{k=0}^{+\infty}\sum_{j=0}^{+\infty}\left(1+2^{k-1}\right)^{-nM}\left(1+2^{j-1}\right)^{-M}2^{nk}2^{j}\lesssim1.
\end{align*}
\end{proof}
Using the above lemma we obtain
\begin{equation}
A_{2}\leqslant CN^{2}\label{eq: A2}
\end{equation}
 for a suitable constant $C>0$. To estimate $A_{1}$ we start with
the following lemma.
\begin{lem}
\label{lem:delta L2}Let $\left\{ \left(\rho_{j},\tau_{j}\right)\right\} _{j=1}^{N}$
be a finite sequence of points in $\mathbb{H}^{n},$ then
\begin{align*}
\sum_{\left|\alpha\right|=k}\left\Vert \sum_{j=1}^{N}\widehat{\delta}_{\left(\rho_{j},\tau_{j}\right)}\left(\lambda\right)\Phi_{\alpha}^{\lambda}\right\Vert _{L^{2}\left(\mathbb{R}^{n}\right)}^{2} & =\left(\frac{2\pi}{\left|\lambda\right|}\right)^{n}\sum_{\ell,j=1}^{N}e^{i\lambda\left(\tau_{j}-\tau_{\ell}-\frac{1}{2}\mathrm{Im}\rho_{\ell}\overline{\rho_{j}}\right)}\varphi_{\ell}^{\lambda}\left(\rho_{j}-\rho_{\ell}\right).
\end{align*}
\end{lem}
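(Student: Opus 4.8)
The plan is to expand the squared $L^2$-norm into a finite double sum of matrix coefficients of the Schr\"odinger representation, use its unitarity and homomorphism properties to collapse each matrix coefficient to a single point of $\mathbb H^n$, and then perform the sum over $|\alpha|=k$ by means of Proposition~\ref{prop:phi_k}. This is the same mechanism already used above in the computation of the term $A_2$, now applied to a product of two Dirac masses rather than a Dirac mass tested against $\mu$.

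Concretely, I would first recall that $\widehat{\delta}_{(\rho_j,\tau_j)}(\lambda)=\pi_\lambda(\rho_j,\tau_j)$ by the definition of the group Fourier transform of a measure, so that
\[
\Bigl\Vert\sum_{j=1}^{N}\widehat{\delta}_{(\rho_j,\tau_j)}(\lambda)\Phi_\alpha^\lambda\Bigr\Vert_{L^2(\mathbb R^n)}^2
=\sum_{\ell,j=1}^{N}\bigl\langle\pi_\lambda(\rho_j,\tau_j)\Phi_\alpha^\lambda,\ \pi_\lambda(\rho_\ell,\tau_\ell)\Phi_\alpha^\lambda\bigr\rangle .
\]
Since $\pi_\lambda(\rho_\ell,\tau_\ell)$ is unitary by Proposition~\ref{prop: proprieta Shrodinger}, its adjoint coincides with its inverse, and the homomorphism property gives
\[
\bigl\langle\pi_\lambda(\rho_j,\tau_j)\Phi_\alpha^\lambda,\ \pi_\lambda(\rho_\ell,\tau_\ell)\Phi_\alpha^\lambda\bigr\rangle
=\bigl\langle\pi_\lambda\bigl((\rho_\ell,\tau_\ell)^{-1}\circ(\rho_j,\tau_j)\bigr)\Phi_\alpha^\lambda,\ \Phi_\alpha^\lambda\bigr\rangle .
\]
Next I would compute, using the group law, that $(\rho_\ell,\tau_\ell)^{-1}\circ(\rho_j,\tau_j)=\bigl(\rho_j-\rho_\ell,\ \tau_j-\tau_\ell-\tfrac12\mathrm{Im}(\rho_\ell\cdot\overline{\rho_j})\bigr)$, and then peel off the central variable via \eqref{eq: pi lambde}, namely $\pi_\lambda(z,t)=e^{i\lambda t}\pi_\lambda(z)$, so that each summand becomes
\[
e^{i\lambda(\tau_j-\tau_\ell-\frac12\mathrm{Im}\rho_\ell\overline{\rho_j})}\,\bigl\langle\pi_\lambda(\rho_j-\rho_\ell)\Phi_\alpha^\lambda,\ \Phi_\alpha^\lambda\bigr\rangle ,
\]
with a scalar prefactor independent of $\alpha$.

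It then remains only to sum over the multi-indices with $|\alpha|=k$. Since the double sum over $\ell,j$ is finite, it commutes with $\sum_{|\alpha|=k}$, and Proposition~\ref{prop:phi_k} yields $\sum_{|\alpha|=k}\langle\pi_\lambda(\rho_j-\rho_\ell)\Phi_\alpha^\lambda,\Phi_\alpha^\lambda\rangle=(2\pi/|\lambda|)^{n}\varphi_k^\lambda(\rho_j-\rho_\ell)$, which is exactly the claimed identity. The argument is essentially bookkeeping and presents no real obstacle; the only step that deserves a moment of care is the sign generated by the group inversion, i.e.\ the identity $\mathrm{Im}\bigl((-\rho_\ell)\cdot\overline{\rho_j}\bigr)=-\mathrm{Im}(\rho_\ell\cdot\overline{\rho_j})$, which is what makes the phase in the final formula come out with the stated sign.
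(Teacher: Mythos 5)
Your proposal is correct and follows essentially the same route as the paper: expand the squared norm into a double sum of matrix coefficients, use unitarity and the homomorphism property to reduce to $\pi_\lambda\bigl((\rho_\ell,\tau_\ell)^{-1}\circ(\rho_j,\tau_j)\bigr)$, extract the central phase via \eqref{eq: pi lambde}, and conclude with Proposition~\ref{prop:phi_k}. Your careful tracking of the sign in the phase even makes explicit that the Laguerre index in the resulting formula should be $k$ rather than the $\ell$ appearing in the paper's display, which is evidently a typographical slip there.
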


\begin{proof}
Since
\[
\widehat{\delta}_{\left(\rho_{j},\tau_{j}\right)}\left(\lambda\right)\Phi_{\alpha}^{\lambda}=\pi_{\lambda}\left(\rho_{j},\tau_{j}\right)\Phi_{\alpha}^{\lambda},
\]
using Proposition \ref{prop: proprieta Shrodinger} and Proposition
\ref{prop:phi_k} we have 
\begin{align*}
\sum_{\left|\alpha\right|=k}\left\Vert \sum_{j=1}^{N}\widehat{\delta}_{\left(\rho_{j},\tau_{j}\right)}\left(\lambda\right)\Phi_{\alpha}^{\lambda}\right\Vert _{L^{2}\left(\mathbb{R}^{n}\right)}^{2}= & \sum_{\left|\alpha\right|=k}\sum_{\ell,j=1}^{N}\left\langle \pi_{\lambda}\left(\rho_{j},\tau_{j}\right)\Phi_{\alpha}^{\lambda},\pi_{\lambda}\left(\rho_{\ell},\tau_{\ell}\right)\Phi_{\alpha}^{\lambda}\right\rangle \\
= & \sum_{\ell,j=1}^{N}\sum_{\left|\alpha\right|=k}\left\langle \pi_{\lambda}\left(\rho_{j}-\rho_{\ell},\tau_{j}-\tau_{\ell}-\frac{1}{2}\mathrm{Im}\rho_{\ell}\overline{\rho_{j}}\right)\Phi_{\alpha}^{\lambda},\Phi_{\alpha}^{\lambda}\right\rangle \\
= & \sum_{\ell,j=1}^{N}e^{i\lambda\left(\tau_{j}-\tau_{\ell}-\frac{1}{2}\mathrm{Im}\rho_{\ell}\overline{\rho_{j}}\right)}\left(\frac{2\pi}{\left|\lambda\right|}\right)^{n}\varphi_{\ell}^{\lambda}\left(\rho_{j}-\rho_{\ell}\right).
\end{align*}
\end{proof}
\begin{prop}
We have
\[
A_{1}\geqslant CNs^{-n}
\]
where $C>0$ only depends on the dimension $n$.
\end{prop}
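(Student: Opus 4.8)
The plan is to evaluate $A_1$ exactly and then to keep only its diagonal contribution. First I would apply Lemma~\ref{lem:delta L2}: the inner sum in the definition of $A_1$ equals
$$\Big(\frac{2\pi}{|\lambda|}\Big)^{n}\sum_{\ell,j=1}^{N}e^{i\lambda\big(\tau_j-\tau_\ell-\frac12\mathrm{Im}\rho_\ell\overline{\rho_j}\big)}\,\varphi_k^{\lambda}(\rho_j-\rho_\ell),$$
so the factor $(2\pi/|\lambda|)^n$ cancels the Plancherel density $|\lambda|^n$ and, after moving the finite sum over $(\ell,j)$ outside the $k$-sum and the $\lambda$-integral,
$$A_1=c\sum_{\ell,j=1}^{N}\int_{-\infty}^{+\infty}e^{i\lambda\big(\tau_j-\tau_\ell-\frac12\mathrm{Im}\rho_\ell\overline{\rho_j}\big)}\sum_{k=0}^{+\infty}\widehat{K_s}(\lambda,k)\,\varphi_k^{\lambda}(\rho_j-\rho_\ell)\,d\lambda .$$
The interchange of summations and integral is legitimate here because $\widehat{K_s}(\lambda,k)$ is supported in $\{|\lambda|\le1\}$ by \eqref{eq:supp Ks} and $K_s$ is a Schwartz function on $\mathbb{H}^n$ (being the one-dimensional convolution in $t$ of the Schwartz heat kernel $q_s$ with the Schwartz function $F$), so that the reconstruction formula \eqref{thm: ricostruzione} can be applied term by term, exactly as in the computation of $A_2$ carried out above.

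Applying \eqref{thm: ricostruzione} would then give, with the same positive constant $\tilde c$ that appeared in the formula for $A_2$,
$$A_1=\tilde c\sum_{\ell,j=1}^{N}K_s\big((\rho_j,\tau_j)^{-1}\circ(\rho_\ell,\tau_\ell)\big),$$
where I use that $K_s$ is radial and symmetric, hence even separately in the horizontal variable $z$ and in the vertical variable $t$, to rewrite the point $\big(\rho_j-\rho_\ell,-(\tau_j-\tau_\ell-\frac12\mathrm{Im}\rho_\ell\overline{\rho_j})\big)$ as the group element $(\rho_j,\tau_j)^{-1}\circ(\rho_\ell,\tau_\ell)$. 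At this stage the positivity $K_s\ge0$ from \eqref{eq:stima K_Lambda,s} makes every off-diagonal summand ($\ell\ne j$) nonnegative, so I would simply discard them and keep the diagonal:
$$A_1\ \ge\ \tilde c\sum_{j=1}^{N}K_s(0,0)\ =\ \tilde c\,N\,K_s(0,0)\ \ge\ c\,\tilde c\,N\,s^{-n},$$
using the lower bound $K_s(0,0)\ge cs^{-n}$ from Lemma~\ref{lem:prop K_s}. This is the desired estimate $A_1\ge CNs^{-n}$, with $C$ depending only on $n$ (the function $F$, and hence all constants entering the argument, being fixed once and for all).

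The computation is essentially a repetition of the manipulation already performed for $A_2$, so I do not expect a genuinely new obstacle. The main point requiring care will be the phase bookkeeping: one has to check that the group-law factor $e^{i\lambda(\tau_j-\tau_\ell-\frac12\mathrm{Im}\rho_\ell\overline{\rho_j})}$ recombines with the vertical variable of $K_s$ precisely so that the diagonal indices $\ell=j$ yield $K_s(0,0)$ rather than $K_s$ evaluated at some nonzero point; the evenness of $K_s$ in each of its two variables is exactly what is needed for this identification. Once that is in place, the positivity of the heat-type kernel $K_s$ does the rest, and no cancellation among the off-diagonal terms has to be analysed.
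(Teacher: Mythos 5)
Your argument is correct and coincides with the paper's own proof: both apply Lemma~\ref{lem:delta L2}, cancel the factor $(2\pi/|\lambda|)^{n}$ against the Plancherel density, use the reconstruction formula to identify the resulting expression with $c\sum_{\ell,j}K_{s}$ evaluated at the group differences, and then drop the nonnegative off-diagonal terms to reduce to $N\,K_{s}(0,0)\gtrsim Ns^{-n}$. The extra care you take with the phase bookkeeping and the evenness/radiality of $K_{s}$ is consistent with what the paper does implicitly.
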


\begin{proof}
Lemma \ref{lem:delta L2} and the fact that $K_{s}\left(z,t\right)\geqslant0$
yield
\begin{align*}
A_{1}= & \left(2\pi\right)^{n}\sum_{\ell,j=1}^{N}\int_{-\infty}^{+\infty}e^{i\lambda\left(\tau_{j}-\tau_{\ell}-\frac{1}{2}\mathrm{Im}\rho_{\ell}\overline{\rho_{j}}\right)}\sum_{k=0}^{+\infty}\widehat{K_{s}}\left(\lambda,k\right)\varphi_{\ell}^{\lambda}\left(\rho_{j}-\rho_{\ell}\right)d\lambda\\
= & c\sum_{\ell,j=1}^{N}K_{s}\left(\rho_{j}-\rho_{\ell},-\left(\tau_{j}-\tau_{\ell}-\frac{1}{2}\mathrm{Im}\rho_{\ell}\overline{\rho_{j}}\right)\right)\\
\geqslant & c\sum_{\ell=1}^{N}K_{s}\left(0,0\right)\geqslant cNs^{-n}.
\end{align*}
\end{proof}
By the above lemma and (\ref{eq: A2}) we obtain
\begin{equation}
A\geqslant C_{1}Ns^{-n}-C_{2}N^{2}.\label{eq:Stima A}
\end{equation}

\subsubsection{Estimate of $B$}

To complete the proof of the estimate in (\ref{eq:stima J}), it remains
to control from above the term $B$. This is achieved by exploiting
the exponential decay of the kernel $K_{s}$ outside the region $F_{\Lambda}$.
In this sense, the integration in $B$ represents a tail contribution
which, for a suitable choice of parameters, turns out to be negligible
in the overall estimate of $\mathcal{J}$.
\begin{prop}
\label{prop:Stima B}Assume $s\Lambda>1$, then
there exists $C>0$ such that
\[
\left|B\right|\leqslant CN^{2}s^{-1}\Lambda^{\frac{Q-4}2}e^{-\frac{1}{2}s\Lambda}.
\]
\end{prop}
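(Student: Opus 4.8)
The plan is to bound $B$ by combining three elementary ingredients: the support and decay of $\widehat{K_{s}}$ from Lemma~\ref{lem:prop K_s}, a crude operator-norm bound on $\widehat{\sigma}(\lambda)$, and the convergence of $\sum_{k}\nu^{-2}$. First I would invoke \eqref{eq:supp Ks} to observe that the integral defining $B$ lives on $\{|\lambda|\leqslant1\}$, so on $F_{\Lambda}^{C}$ the relevant region is $\{|\lambda|\leqslant1,\ \langle\lambda\nu\rangle>\Lambda\}$; since $\langle\nu\rangle\geqslant\langle\lambda\nu\rangle>\Lambda$, this forces $\nu\gtrsim\Lambda$, so only indices $k$ with $k\gtrsim\Lambda$ contribute. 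On this region \eqref{eq:coeff K_Lambda,s} gives $\widehat{K_{s}}(\lambda,k)\leqslant e^{-(2k+n)|\lambda|s}=e^{-\frac{1}{2}\nu|\lambda|s}$, which will supply the exponential gain.

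Next I would control $\sum_{|\alpha|=k}\|\widehat{\sigma}(\lambda)\Phi_{\alpha}^{\lambda}\|_{L^{2}(\mathbb{R}^{n})}^{2}$ by brute force. Since each $\pi_{\lambda}(\zeta_{j},\tau_{j})$ is unitary and $\mu$ is a probability measure, $\|\widehat{\sigma}(\lambda)\|_{\mathrm{op}}\leqslant 2N$; and since $\{\Phi_{\alpha}^{\lambda}\}$ is orthonormal, the sum over $|\alpha|=k$ is at most $4N^{2}d_{k}$, where $d_{k}=\#\{\alpha\in\mathbb{N}^{n}:|\alpha|=k\}\approx\nu^{n-1}$. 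Here, in contrast to the estimate of $A$, no finer information on the point configuration is needed: the exponential decay of $\widehat{K_{s}}$ on $F_{\Lambda}^{C}$ already makes the whole contribution negligible. Altogether this reduces the problem to
\[
|B|\lesssim N^{2}\sum_{k\,:\,\nu\gtrsim\Lambda}\nu^{n-1}\int_{\{|\lambda|\leqslant1,\ \langle\lambda\nu\rangle>\Lambda\}}e^{-\frac{1}{2}\nu|\lambda|s}|\lambda|^{n}\,d\lambda.
\]

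To handle the inner integral I would substitute $u=\nu|\lambda|$; the condition $\langle\lambda\nu\rangle>\Lambda$ restricts $u$ to a half-line $u>\Lambda_{0}$ with $\Lambda_{0}\geqslant\Lambda-1$, so the integral is at most $2\nu^{-n-1}\int_{\Lambda_{0}}^{\infty}e^{-\frac{1}{2}us}u^{n}\,du$. Writing $u=\Lambda_{0}+w$, factoring out $e^{-\frac{1}{2}\Lambda_{0}s}$, and using $(\Lambda_{0}+w)^{n}\lesssim\Lambda^{n}+w^{n}$ together with $\int_{0}^{\infty}e^{-\frac{1}{2}ws}\,dw\approx s^{-1}$ and $\int_{0}^{\infty}e^{-\frac{1}{2}ws}w^{n}\,dw\approx s^{-n-1}$, one gets $\int_{\Lambda_{0}}^{\infty}e^{-\frac{1}{2}us}u^{n}\,du\lesssim e^{-\frac{1}{2}\Lambda_{0}s}\big(s^{-1}\Lambda^{n}+s^{-n-1}\big)$. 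At this point the hypothesis $s\Lambda>1$ enters: it gives $s^{-n}\leqslant\Lambda^{n}$, hence $s^{-n-1}\leqslant s^{-1}\Lambda^{n}$; and since $s<1$ and $\Lambda_{0}\geqslant\Lambda-1$ we have $e^{-\frac{1}{2}\Lambda_{0}s}\lesssim e^{-\frac{1}{2}s\Lambda}$. Thus the inner integral is $\lesssim \nu^{-n-1}s^{-1}\Lambda^{n}e^{-\frac{1}{2}s\Lambda}$. Plugging this back, using $\nu^{n-1}\nu^{-n-1}=\nu^{-2}$ and $\sum_{k:\nu\gtrsim\Lambda}\nu^{-2}\approx\sum_{k\gtrsim\Lambda}k^{-2}\lesssim\Lambda^{-1}$, yields $|B|\lesssim N^{2}s^{-1}\Lambda^{n-1}e^{-\frac{1}{2}s\Lambda}$; since $Q=2n+2$ we have $n-1=\frac{Q-4}{2}$, which is the asserted bound.

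The one genuinely delicate step is the estimate of the one-dimensional integral $\int_{\Lambda_{0}}^{\infty}e^{-\frac{1}{2}us}u^{n}\,du$: one must extract exactly the factors $s^{-1}$, $\Lambda^{(Q-4)/2}$ and $e^{-\frac{1}{2}s\Lambda}$ and nothing more, and it is precisely there that $s\Lambda>1$ is used, to trade the $s^{-n}$ coming from the bulk of $e^{-us/2}u^{n}$ (equivalently, from the incomplete Gamma function) against $\Lambda^{n}$. Everything else---the support and decay of $\widehat{K_{s}}$, the bound $\|\widehat{\sigma}(\lambda)\|_{\mathrm{op}}\leqslant2N$, the count $d_{k}\approx\nu^{n-1}$, and the convergence of $\sum_{k}\nu^{-2}$---is routine.
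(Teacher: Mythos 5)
Your proof is correct and follows essentially the same route as the paper: exponential decay of $\widehat{K_{s}}$ on $F_{\Lambda}^{C}$, a bound of order $N^{2}\nu^{n-1}$ for the $\sigma$-term at each level $k$, the incomplete-Gamma estimate with $s\Lambda>1$, and the convergent tail $\sum_{\nu>\Lambda}\nu^{-2}\lesssim\Lambda^{-1}$. The only (harmless, and in fact slightly softer) deviation is that you get the per-level bound $4N^{2}d_{k}$ from the operator-norm estimate $\left\Vert \widehat{\sigma}\left(\lambda\right)\right\Vert _{\mathrm{op}}\leqslant2N$ together with the count $d_{k}=\binom{n+k-1}{k}$, whereas the paper derives the identical bound $4N^{2}\binom{n+k-1}{k}$ from the explicit Laguerre formula of Lemma \ref{lem:delta L2} and the inequality $\sup_{x}\left|L_{k}^{n-1}\left(x\right)\right|e^{-x/2}=L_{k}^{n-1}\left(0\right)$.
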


\begin{proof}
Let us consider
\begin{align*}
\left\Vert \widehat{\sigma}\left(\lambda\right)\Phi_{\alpha}^{\lambda}\right\Vert _{L^{2}\left(\mathbb{R}^{n}\right)}^{2}= & \left\Vert \sum_{j=1}^{N}\widehat{\delta}_{\left(\rho_{j},\tau_{j}\right)}\left(\lambda\right)\Phi_{\alpha}^{\lambda}-N\widehat{\mu}\left(\lambda\right)\Phi_{\alpha}^{\lambda}\right\Vert _{L^{2}\left(\mathbb{R}^{n}\right)}^{2}\\
\leqslant & 2\left\Vert \sum_{j=1}^{N}\widehat{\delta}_{\left(\rho_{j},\tau_{j}\right)}\left(\lambda\right)\Phi_{\alpha}^{\lambda}\right\Vert _{L^{2}\left(\mathbb{R}^{n}\right)}^{2}+2\left\Vert N\widehat{\mu}\left(\lambda\right)\Phi_{\alpha}^{\lambda}\right\Vert _{L^{2}\left(\mathbb{R}^{n}\right)}^{2}.
\end{align*}
By Lemma \ref{lem:delta L2} and (\ref{eq:def phi lambda k}) and
\cite[18.14.8]{NIST}, we have
\begin{align*}
\sum_{\left|\alpha\right|=k}\left\Vert \sum_{j=1}^{N}\widehat{\delta}_{\left(\rho_{j},\tau_{j}\right)}\left(\lambda\right)\Phi_{\alpha}^{\lambda}\right\Vert _{L^{2}\left(\mathbb{R}^{n}\right)}^{2}= & \left(\frac{2\pi}{\left|\lambda\right|}\right)^{n}\sum_{\ell,j=1}^{N}e^{i\lambda\left(\tau_{j}-\tau_{\ell}-\frac{1}{2}\mathrm{Im}\rho_{\ell}\overline{\rho_{j}}\right)}\varphi_{\ell}^{\lambda}\left(\rho_{j}-\rho_{\ell}\right)\\
\leqslant & \sum_{\ell,j=1}^{N}\left|L_{k}^{n-1}\left(\frac{1}{2}\left|\lambda\right|\left|\rho_{j}-\rho_{\ell}\right|^{2}\right)\right|e^{-\frac{1}{4}\left|\lambda\right|\left|\rho_{j}-\rho_{\ell}\right|^{2}}\\
\leqslant & N^{2}\sup_{x\in\left[0,\infty\right)}\left(\left|L_{k}^{n-1}\left(x\right)\right|e^{-\frac{1}{2}x}\right)=N^{2}L_{k}^{n-1}\left(0\right)=N^{2}\binom{n+k-1}{k}.
\end{align*}
Similarly,
\begin{align*}
\sum_{\left|\alpha\right|=k}\left\Vert \widehat{\mu}\left(\lambda\right)\Phi_{\alpha}^{\lambda}\right\Vert _{L^{2}\left(\mathbb{R}^{n}\right)}^{2}= & \int_{\mathbb{H}^{n}}\int_{\mathbb{H}^{n}}\sum_{\left|\alpha\right|=k}\left\langle \pi_{\lambda}\left(\left(w,u\right)^{-1}\circ\left(z,t\right)\right)\Phi_{\alpha}^{\lambda},\Phi_{\alpha}^{\lambda}\right\rangle d\mu\left(z,t\right)d\mu\left(w,u\right)\\
= & \int_{\mathbb{H}^{n}}\int_{\mathbb{H}^{n}}e^{i\lambda\left(t-u-\frac{1}{2}\mathtt{Im}w\cdot\overline{z}\right)}L_{k}^{n-1}\left(\frac{1}{2}\left|\lambda\right|\left|z-w\right|^{2}\right)e^{-\frac{1}{4}\left|\lambda\right|\left|z-w\right|^{2}}d\mu\left(z,t\right)d\mu\left(w,u\right)\\
\leqslant & \sup_{x\in\left[0,\infty\right)}\left(\left|L_{k}^{n-1}\left(x\right)\right|e^{-\frac{1}{2}x}\right)=\binom{n+k-1}{k}.
\end{align*}
Hence,
\begin{align*}
\sum_{\left|\alpha\right|=k}\left\Vert \widehat{\sigma}\left(\lambda\right)\Phi_{\alpha}^{\lambda}\right\Vert _{L^{2}\left(\mathbb{R}^{n}\right)}^{2}\leqslant & 4N^{2}\binom{n+k-1}{k}.
\end{align*}
By (\ref{eq:coeff K_Lambda,s}) and (\ref{eq:supp Ks}), we have 
\begin{align*}
B= & \sum_{k=0}^{+\infty}\int_{-\infty}^{+\infty}\chi_{F_{\Lambda}^{C}\left(k,\lambda\right)}\widehat{K_{s}}\left(\lambda,k\right)\sum_{\left|\alpha\right|=k}\left\Vert \widehat{\sigma}\left(\lambda\right)\Phi_{\alpha}^{\lambda}\right\Vert _{L^{2}\left(\mathbb{R}^{n}\right)}^{2}\left|\lambda\right|^{n}d\lambda\\
\leqslant & 4N^{2}\sum_{k=0}^{+\infty}\binom{n+k-1}{k}\int_{-1}^{1}\chi_{F_{\Lambda}^{C}\left(k,\lambda\right)}e^{-\left(2k+n\right)\left|\lambda\right|s}\left|\lambda\right|^{n}d\lambda\\
\leqslant & 8N^{2}\sum_{\Lambda<\nu}\binom{n+k-1}{k}\int_{\nu^{-1}\left(\Lambda-1\right)}^{1}e^{-\frac{1}{2}\nu\lambda s}\lambda^{n}d\lambda\\
\lesssim & N^{2}\sum_{\Lambda<\nu}\nu^{n-1}\left(s\nu\right)^{-n-1}\int_{\frac{1}{2}s\left(\Lambda-1\right)}^{\frac{1}{2}\nu s}e^{-u}u^{n}d\lambda\\
\leqslant & N^{2}s^{-n-1}\Gamma\left(n+1,\frac{1}{2}s\left(\Lambda-1\right)\right)\sum_{\Lambda\leqslant\nu}\nu^{-2}\\
\lesssim & N^{2}s^{-n-1}\Lambda^{-1}\Gamma\left(n+1,\frac{1}{2}s\left(\Lambda-1\right)\right).
\end{align*}
Where $\Gamma\left(n+1,t\right)$ denotes the incomplete Gamma function.
Since $s\Lambda$ is large, we obtain
\[
B\lesssim N^{2}s^{-1}\Lambda^{\frac{Q-4}2}e^{-\frac{1}{2}s\Lambda}.
\]
\end{proof}
We conclude by observing that (\ref{eq:Stima A}), together with Proposition
\ref{prop:Stima B}, yields the estimates claimed in (\ref{eq:stima J}).

\bibliographystyle{abbrv}
\bibliography{biblio_Heisenberg}

@article{Li2007,
	author = {Li, Hong-Quan},
	date-added = {2025-11-20 15:30:14 +0100},
	date-modified = {2025-11-20 15:30:56 +0100},
	doi = {10.1016/j.crma.2007.02.015},
	fjournal = {Comptes Rendus Math\'{e}matique. Acad\'{e}mie des Sciences. Paris},
	issn = {1631-073X},
	journal = {C. R. Math. Acad. Sci. Paris},
	mrclass = {35H10 (35B40 35B45 35K05 43A80 58J35 58J37)},
	mrnumber = {2324485},
	number = {8},
	pages = {497--502},
	title = {Estimations asymptotiques du noyau de la chaleur sur les groupes de {H}eisenberg},
	url = {https://doi.org/10.1016/j.crma.2007.02.015},
	volume = {344},
	year = {2007},
	bdsk-url-1 = {https://mathscinet.ams.org/mathscinet-getitem?mr=2324485}}

@article{Skriganov2019,
	author = {Skriganov, M. M.},
	date-added = {2025-10-17 17:07:11 +0200},
	date-modified = {2025-10-17 17:07:42 +0200},
	doi = {10.1112/s0025579319000019},
	fjournal = {Mathematika. A Journal of Pure and Applied Mathematics},
	issn = {0025-5793},
	journal = {Mathematika},
	mrclass = {11K38 (22F30 52C17 65D32)},
	mrnumber = {3932579},
	mrreviewer = {Christoph Aistleitner},
	number = {3},
	pages = {557--587},
	title = {Point distributions in two-point homogeneous spaces},
	url = {https://doi.org/10.1112/s0025579319000019},
	volume = {65},
	year = {2019},
	bdsk-url-1 = {https://mathscinet.ams.org/mathscinet-getitem?mr=3932579}}

@article{Skriganov2017,
	author = {Skriganov, M. M.},
	date-added = {2025-10-17 17:07:05 +0200},
	date-modified = {2025-10-17 17:07:31 +0200},
	doi = {10.1112/S0025579317000286},
	fjournal = {Mathematika. A Journal of Pure and Applied Mathematics},
	issn = {0025-5793},
	journal = {Mathematika},
	mrclass = {11K38 (49Q20)},
	mrnumber = {3731319},
	mrreviewer = {Stefan Steinerberger},
	number = {3},
	pages = {1152--1171},
	title = {Point distributions in compact metric spaces},
	url = {https://doi.org/10.1112/S0025579317000286},
	volume = {63},
	year = {2017},
	bdsk-url-1 = {https://mathscinet.ams.org/mathscinet-getitem?mr=3731319}}

@book{DrmotaTichy1997,
	author = {Drmota, M. and Tichy, R. F.},
	date-added = {2025-10-17 12:17:07 +0200},
	date-modified = {2025-10-17 12:17:45 +0200},
	doi = {10.1007/BFb0093404},
	isbn = {3-540-62606-9},
	mrclass = {11Kxx (11K06 11K38)},
	mrnumber = {1470456},
	mrreviewer = {Oto Strauch},
	pages = {xiv+503},
	publisher = {Springer-Verlag, Berlin},
	series = {Lecture Notes in Mathematics},
	title = {Sequences, discrepancies and applications},
	url = {https://doi.org/10.1007/BFb0093404},
	volume = {1651},
	year = {1997},
	bdsk-url-1 = {https://mathscinet.ams.org/mathscinet-getitem?mr=1470456}}

@url{ChenLectures,
	author = {Chen, W. W. L.},
	date-added = {2025-10-17 12:13:53 +0200},
	date-modified = {2025-10-17 12:15:34 +0200},
	title = {Lectures on irregularities of point distribution, web edition},
	urldate = {http://www.williamchen-mathematics.info/researchfolder/iod00.pdf}}

@book{Panorama2014,
	date-added = {2025-10-17 12:12:46 +0200},
	date-modified = {2025-10-17 12:13:00 +0200},
	editor = {Chen, W. W. L. and Srivastav, A. and Travaglini, G.},
	isbn = {978-3-319-04695-2; 978-3-319-04696-9},
	mrclass = {11-06 (11J71 11K36 11K38)},
	mrnumber = {3307667},
	pages = {xvi+695},
	publisher = {Springer, Cham},
	series = {Lecture Notes in Mathematics},
	title = {A panorama of discrepancy theory},
	url = {https://mathscinet.ams.org/mathscinet-getitem?mr=3307667},
	volume = {2107},
	year = {2014},
	bdsk-url-1 = {https://mathscinet.ams.org/mathscinet-getitem?mr=3307667}}

@book{ChazelleBook2000,
	author = {Chazelle, B.},
	date-added = {2025-10-17 12:08:40 +0200},
	date-modified = {2025-10-17 12:09:05 +0200},
	doi = {10.1017/CBO9780511626371},
	isbn = {0-521-77093-9},
	mrclass = {68-02 (11K38 52B55 65Y20 68Q25 68U05)},
	mrnumber = {1779341},
	mrreviewer = {Allen D. Rogers},
	pages = {xviii+463},
	publisher = {Cambridge University Press, Cambridge},
	title = {The discrepancy method},
	url = {https://doi.org/10.1017/CBO9780511626371},
	year = {2000},
	bdsk-url-1 = {https://mathscinet.ams.org/mathscinet-getitem?mr=1779341}}

@incollection{BrandoliniGiganteTravaglini2014,
	author = {Brandolini, L. and Gigante, G. and Travaglini, G.},
	booktitle = {A panorama of discrepancy theory},
	date-added = {2025-10-17 12:06:00 +0200},
	date-modified = {2025-10-17 12:06:25 +0200},
	doi = {10.1007/978-3-319-04696-9\_3},
	mrclass = {11K38 (11K36 42B10)},
	mrnumber = {3330325},
	mrreviewer = {Robert F. Tichy},
	pages = {159--220},
	publisher = {Springer, Cham},
	series = {Lecture Notes in Math.},
	title = {Irregularities of distribution and average decay of {F}ourier transforms},
	url = {https://doi.org/10.1007/978-3-319-04696-9_3},
	volume = {2107},
	year = {2014},
	bdsk-url-1 = {https://mathscinet.ams.org/mathscinet-getitem?mr=3330325}}

@book{Thangavelu2004,
	author = {Thangavelu, S.},
	date-added = {2025-10-17 11:47:52 +0200},
	date-modified = {2025-10-17 11:48:04 +0200},
	doi = {10.1007/978-0-8176-8164-7},
	isbn = {0-8176-4330-3},
	mrclass = {43A80 (33C45 33C55 42C10 43-02 43A20)},
	mrnumber = {2008480},
	mrreviewer = {Weixing Zheng},
	pages = {xiv+174},
	publisher = {Birkh\"{a}user Boston, Inc., Boston, MA},
	series = {Progress in Mathematics},
	title = {An introduction to the uncertainty principle},
	url = {https://doi.org/10.1007/978-0-8176-8164-7},
	volume = {217},
	year = {2004},
	bdsk-url-1 = {https://mathscinet.ams.org/mathscinet-getitem?mr=2008480},
	bdsk-url-2 = {https://doi.org/10.1007/978-0-8176-8164-7}}

@book{NIST,
	author = {Olver, Frank W. and Lozier, Daniel W. and Boisvert, Ronald and Clark, Charles W.},
	date-added = {2025-10-17 11:42:04 +0200},
	date-modified = {2025-10-17 11:44:39 +0200},
	publisher = {Cambridge University Press, New York, NY},
	title = {The NIST Handbook of Mathematical Functions},
	year = {2010}}

@article{AskeyWainger1965,
	author = {Askey, R. and Wainger, S.},
	date-added = {2025-10-17 11:38:44 +0200},
	date-modified = {2025-10-17 11:38:59 +0200},
	doi = {10.2307/2373069},
	fjournal = {American Journal of Mathematics},
	issn = {0002-9327},
	journal = {Amer. J. Math.},
	mrclass = {42.20},
	mrnumber = {182834},
	mrreviewer = {G. M. Wing},
	pages = {695--708},
	title = {Mean convergence of expansions in {L}aguerre and {H}ermite series},
	url = {https://doi.org/10.2307/2373069},
	volume = {87},
	year = {1965},
	bdsk-url-1 = {https://mathscinet.ams.org/mathscinet-getitem?mr=182834},
	bdsk-url-2 = {https://doi.org/10.2307/2373069}}

@article{FrenzenWong1988,
	author = {Frenzen, C. L. and Wong, R.},
	date-added = {2025-10-17 11:37:06 +0200},
	date-modified = {2025-10-17 11:37:20 +0200},
	doi = {10.1137/0519087},
	fjournal = {SIAM Journal on Mathematical Analysis},
	issn = {0036-1410},
	journal = {SIAM J. Math. Anal.},
	mrclass = {33A65 (33A40 41A60)},
	mrnumber = {957682},
	mrreviewer = {F. W. J. Olver},
	number = {5},
	pages = {1232--1248},
	title = {Uniform asymptotic expansions of {L}aguerre polynomials},
	url = {https://doi.org/10.1137/0519087},
	volume = {19},
	year = {1988},
	bdsk-url-1 = {https://mathscinet.ams.org/mathscinet-getitem?mr=957682},
	bdsk-url-2 = {https://doi.org/10.1137/0519087}}

@article{Erdelyi1960,
	author = {Erd\'{e}lyi, A.},
	date-added = {2025-10-17 11:35:20 +0200},
	date-modified = {2025-10-17 11:35:53 +0200},
	fjournal = {The Journal of the Indian Mathematical Society. New Series},
	issn = {0019-5839},
	journal = {J. Indian Math. Soc. (N.S.)},
	mrclass = {33.40},
	mrnumber = {123751},
	mrreviewer = {T. M. Cherry},
	pages = {235--250 (1961)},
	title = {Asymptotic forms for {L}aguerre polynomials},
	url = {https://mathscinet.ams.org/mathscinet-getitem?mr=123751},
	volume = {24},
	year = {1960},
	bdsk-url-1 = {https://mathscinet.ams.org/mathscinet-getitem?mr=123751}}

@book{Matousek1999,
	author = {Matousek, J.},
	date-modified = {2025-10-17 11:27:03 +0200},
	publisher = {Springer Science \& Business Media},
	title = {Geometric discrepancy: An illustrated guide},
	volume = {18},
	year = {1999}}

@article{BGG2021,
	author = {Brandolini, L. and Gariboldi, B. and Gigante, G.},
	date-modified = {2025-10-17 11:27:45 +0200},
	fjournal = {Mathematische Annalen},
	issn = {0025-5831,1432-1807},
	journal = {Math. Ann.},
	number = {3-4},
	pages = {1807--1834},
	title = {On a sharp lemma of {C}assels and {M}ontgomery on manifolds},
	volume = {379},
	year = {2021}}

@book{TravagliniBook2014,
	author = {Travaglini, G.},
	date-modified = {2025-10-17 11:27:52 +0200},
	isbn = {978-1-107-61985-2},
	pages = {x+240},
	publisher = {Cambridge University Press, Cambridge},
	series = {London Mathematical Society Student Texts},
	title = {Number theory, {F}ourier analysis and geometric discrepancy},
	volume = {81},
	year = {2014}}

@article{Roth1954,
	author = {Roth, K. F.},
	date-modified = {2025-10-17 11:28:00 +0200},
	journal = {Mathematika},
	number = {2},
	pages = {73--79},
	publisher = {London Mathematical Society},
	title = {On irregularities of distribution},
	volume = {1},
	year = {1954}}

@book{BeckChenBook1987,
	author = {Beck, J. and Chen, W. W. L.},
	date-modified = {2025-10-17 11:29:38 +0200},
	pages = {xiv+294},
	publisher = {Cambridge University Press, Cambridge},
	series = {Cambridge Tracts in Mathematics},
	title = {Irregularities of distribution},
	volume = {89},
	year = {1987}}

@inbook{BGG2022,
	author = {Brandolini, L. and Gariboldi, B. and Gigante, G.},
	booktitle = {The Mathematical Heritage of Guido Weiss},
	doi = {10.1007/978-3-031-76793-7_5},
	editor = {Hern{\'a}ndez, Eugenio and Peloso, Marco Maria and Ricci, Fulvio and Soria, Fernando and Tabacco, Anita},
	isbn = {978-3-031-76793-7},
	pages = {101--125},
	publisher = {The Mathematical Heritage of Guido Weiss, Springer Nature Switzerland},
	title = {Irregularities of distribution on two point homogeneous spaces},
	year = {2025},
	bdsk-url-1 = {https://doi.org/10.1007/978-3-031-76793-7_5}}

@article{BGGM2024,
	author = {Brandolini, L. and Gariboldi, B. and Gigante, G. and Monguzzi, A.},
	date-modified = {2025-10-17 17:08:59 +0200},
	fjournal = {Mathematische Zeitschrift},
	issn = {0025-5874,1432-1823},
	journal = {Math. Z.},
	mrclass = {11K38 (33C45 43A85)},
	number = {3},
	pages = {Paper No. 52, 16},
	title = {Single radius spherical cap discrepancy on compact two-point homogeneous spaces},
	volume = {308},
	year = {2024}}

@article {BCT2023,
    AUTHOR = {Brandolini, L. and Colzani, L. and Travaglini,
              G.},
     TITLE = {Irregularities of distribution for bounded sets and
              half-spaces},
   JOURNAL = {Mathematika},
  FJOURNAL = {Mathematika. A Journal of Pure and Applied Mathematics},
    VOLUME = {69},
      YEAR = {2023},
    NUMBER = {1},
     PAGES = {68--89},
      ISSN = {0025-5793,2041-7942},
   MRCLASS = {11K38 (42B10)},
  MRNUMBER = {4516801},
MRREVIEWER = {Stefan\ Steinerberger},
       DOI = {10.1112/mtk.12178},
       URL = {https://doi.org/10.1112/mtk.12178},
}

@article {BCCGT2019,
    AUTHOR = {Brandolini, L. and Chen, W. W. L. and Colzani, L.
              and Gigante, G. and Travaglini, G.},
     TITLE = {Discrepancy and numerical integration on metric measure
              spaces},
   JOURNAL = {J. Geom. Anal.},
  FJOURNAL = {Journal of Geometric Analysis},
    VOLUME = {29},
      YEAR = {2019},
    NUMBER = {1},
     PAGES = {328--369},
      ISSN = {1050-6926,1559-002X},
   MRCLASS = {65D30 (11K38 30L05 31E05)},
  MRNUMBER = {3897016},
       DOI = {10.1007/s12220-018-9993-6},
       URL = {https://doi.org/10.1007/s12220-018-9993-6},
}

@article {GL2017,
    AUTHOR = {Gigante, G. and Leopardi, P.},
     TITLE = {Diameter bounded equal measure partitions of {A}hlfors regular
              metric measure spaces},
   JOURNAL = {Discrete Comput. Geom.},
  FJOURNAL = {Discrete \& Computational Geometry. An International Journal
              of Mathematics and Computer Science},
    VOLUME = {57},
      YEAR = {2017},
    NUMBER = {2},
     PAGES = {419--430},
      ISSN = {0179-5376,1432-0444},
   MRCLASS = {30L05 (11K38 28A75 52C22 54E45 65D30)},
  MRNUMBER = {3602860},
MRREVIEWER = {Juha\ Lehrb\"ack},
       DOI = {10.1007/s00454-016-9834-y},
       URL = {https://doi.org/10.1007/s00454-016-9834-y},
}

@article {GNT,
    AUTHOR = {Garg, R. and Nevo, A. and Taylor, K.},
     TITLE = {The lattice point counting problem on the {H}eisenberg groups},
   JOURNAL = {Ann. Inst. Fourier (Grenoble)},
  FJOURNAL = {Universit\'e{} de Grenoble. Annales de l'Institut Fourier},
    VOLUME = {65},
      YEAR = {2015},
    NUMBER = {5},
     PAGES = {2199--2233},
      ISSN = {0373-0956,1777-5310},
   MRCLASS = {11P21 (22E25 26D10 42B99 43A80)},
}

@article {G,
    AUTHOR = {Gath, Y. A.},
     TITLE = {On an analogue of the {G}auss circle problem for the
              {H}eisenberg groups},
   JOURNAL = {Ann. Sc. Norm. Super. Pisa Cl. Sci. (5)},
  FJOURNAL = {Annali della Scuola Normale Superiore di Pisa. Classe di
              Scienze. Serie V},
    VOLUME = {23},
      YEAR = {2022},
    NUMBER = {2},
     PAGES = {645--717},
      ISSN = {0391-173X,2036-2145},
 }

@book{F,
    AUTHOR = {Folland, G. B.},
     TITLE = {Harmonic analysis in phase space},
    SERIES = {Annals of Mathematics Studies},
    VOLUME = {122},
 PUBLISHER = {Princeton University Press, Princeton, NJ},
      YEAR = {1989},
     PAGES = {x+277},
      ISBN = {0-691-08527-7; 0-691-08528-5},
}

@book {T_book,
    AUTHOR = {Thangavelu, S.},
     TITLE = {Harmonic analysis on the {H}eisenberg group},
    SERIES = {Progress in Mathematics},
    VOLUME = {159},
 PUBLISHER = {Birkh\"auser Boston, Inc., Boston, MA},
      YEAR = {1998},
     PAGES = {xiv+192},
      ISBN = {0-8176-4050-9},
 }

@book {BB2023,
    AUTHOR = {Bramanti, Marco and Brandolini, Luca},
     TITLE = {H\"ormander operators},
 PUBLISHER = {World Scientific Publishing Co. Pte. Ltd., Hackensack, NJ},
      YEAR = {2023},
     PAGES = {xxviii+693},
      ISBN = {978-981-126-168-8; 978-981-126-169-5; 978-981-126-170-1},
   MRCLASS = {35Hxx},
  MRNUMBER = {4544986},
}

@book {BLU,
    AUTHOR = {Bonfiglioli, A. and Lanconelli, E. and Uguzzoni, F.},
     TITLE = {Stratified {L}ie groups and potential theory for their
              sub-{L}aplacians},
    SERIES = {Springer Monographs in Mathematics},
 PUBLISHER = {Springer, Berlin},
      YEAR = {2007},
     PAGES = {xxvi+800},
      ISBN = {978-3-540-71896-3; 3-540-71896-6},
   MRCLASS = {22E30 (31C45 35-02 35H10 43A80)},
  MRNUMBER = {2363343},
MRREVIEWER = {Maria\ Stella\ Fanciullo},
}

@article {Geller,
    AUTHOR = {Geller, Daryl},
     TITLE = {Fourier analysis on the {H}eisenberg group},
   JOURNAL = {Proc. Nat. Acad. Sci. U.S.A.},
  FJOURNAL = {Proceedings of the National Academy of Sciences of the United
              States of America},
    VOLUME = {74},
      YEAR = {1977},
    NUMBER = {4},
     PAGES = {1328--1331},
      ISSN = {0027-8424},
   MRCLASS = {22E30 (22E25 32F99 35H05 43A30)},
  MRNUMBER = {486312},
MRREVIEWER = {A.\ S.\ Dynin},
       DOI = {10.1073/pnas.74.4.1328},
       URL = {https://doi.org/10.1073/pnas.74.4.1328},
}

@article {BT2022,
    AUTHOR = {Brandolini, Luca and Travaglini, Giancarlo},
     TITLE = {Irregularities of distribution and geometry of planar convex
              sets},
   JOURNAL = {Adv. Math.},
  FJOURNAL = {Advances in Mathematics},
    VOLUME = {396},
      YEAR = {2022},
     PAGES = {Paper No. 108162, 40},
      ISSN = {0001-8708,1090-2082},
   MRCLASS = {11K38 (42B10)},
  MRNUMBER = {4358540},
MRREVIEWER = {Christoph\ Aistleitner},
       DOI = {10.1016/j.aim.2021.108162},
       URL = {https://doi.org/10.1016/j.aim.2021.108162},
}

@article {BBM2026,
    AUTHOR = {Brandolini, Luca and Travaglini, Giancarlo},
     TITLE = {Discrepancy estimates for non-smooth convex bodies: a case study},
     NOTE = {In preparation}
}

\end{document}